\newcommand{\redge}[8]{

		\ifx\relax#5\relax
		\def\qoffs{0pt}
	\else
		\def\qoffs{#5}
	\fi

				\def\rhedge{
		($#1+#4!\qoffs!-90:#2-#4$) -- 
		($#2+#1!\qoffs!-90:#3-#1$) -- 
		($#3+#2!\qoffs!-90:#4-#2$) -- 
		($#4+#3!\qoffs!-90:#1-#3$) -- cycle}

	\coordinate (12) at ($#1!\qoffs!90:#2$);
	\coordinate (14) at ($#1!\qoffs!-90:#4$);
	\coordinate (23) at ($#2!\qoffs!90:#3$);
	\coordinate (21) at ($#2!\qoffs!-90:#1$);
	\coordinate (34) at ($#3!\qoffs!90:#4$);
	\coordinate (32) at ($#3!\qoffs!-90:#2$);
	\coordinate (41) at ($#4!\qoffs!90:#1$);
	\coordinate (43) at ($#4!\qoffs!-90:#3$);
	
	\def\nrhedge{
		(14) let \p1=($(14)-#1$), \p2=($(12)-#1$) in 
			arc[start angle={atan2(\y1,\x1)}, delta angle={atan2(\y2,\x2)-atan2(\y1,\x1)-360*(atan2(\y2,\x2)-atan2(\y1,\x1)>0)}, x radius=\qoffs, y radius=\qoffs] --
		(21) let \p1=($(21)-#2$), \p2=($(23)-#2$) in 
			arc[start angle={atan2(\y1,\x1)}, delta angle={atan2(\y2,\x2)-atan2(\y1,\x1)-360*(atan2(\y2,\x2)-atan2(\y1,\x1)>0)}, x radius=\qoffs, y radius=\qoffs] --
		(32) let \p1=($(32)-#3$), \p2=($(34)-#3$) in 
			arc[start angle={atan2(\y1,\x1)}, delta angle={atan2(\y2,\x2)-atan2(\y1,\x1)-360*(atan2(\y2,\x2)-atan2(\y1,\x1)>0)}, x radius=\qoffs, y radius=\qoffs] --
		(43) let \p1=($(43)-#4$), \p2=($(41)-#4$) in 
			arc[start angle={atan2(\y1,\x1)}, delta angle={atan2(\y2,\x2)-atan2(\y1,\x1)-360*(atan2(\y2,\x2)-atan2(\y1,\x1)>0)}, x radius=\qoffs, y radius=\qoffs] --
		cycle}

		\ifx\relax#6\relax
		\def\rlwidth{1pt}
	\else
		\def\rlwidth{#6}
	\fi
	
		\ifx\relax#8\relax
		\fill \nrhedge;
	\else
		\fill[#8]\nrhedge;
	\fi

		\ifx\relax#7\relax
		\draw[line width=\rlwidth,rounded corners=\qoffs]\nrhedge;
	\else
		\draw[line width=\rlwidth,#7]\nrhedge;
	\fi
}
\renewcommand{\fnum@algorithm}{} 
\newtheorem{theorem}{Theorem}[section]
\newtheorem{lemma}[theorem]{Lemma}
\newtheorem{claim}[theorem]{Claim}
\newtheorem{corollary}[theorem]{Corollary}
\newtheorem{proposition}[theorem]{Proposition}
\newtheorem{problem}[theorem]{Problem}
\numberwithin{equation}{section}
\def\a{\alpha}
\def\b{\beta}
\def\D{\Delta}
\def\d{\delta}
\def\ex{\text{ex}}
\def\g{\gamma}
\def\C{\mathcal{C}}
\def\cH{\mathcal{H}}
\def\cA{\mathcal{A}}
\def\tH{\tilde{H}}
\def\eps{\varepsilon}
\def\mP{\mathbb{P}}
\def\mE{\mathbb{E}}
\DeclareMathOperator{\bouquet}{BOUQUET}
\begin{document}


\title{Independence number of hypergraphs under degree conditions}
\thanks{The first two authors are partially supported by NSF grant DMS 1764385.
The third author is partially supported by NSF grant DMS 1700622 and Simons Collaboration Grant 710094.}
\author{Vojt\v{e}ch R\"odl}
\author{Marcelo Sales}
\author{Yi Zhao}

\address{Department of Mathematics, Emory University, 
    Atlanta, GA, USA}
\email{\{vrodl|mtsales\}@emory.edu}

\address{Department of Mathematics and Statistics, Georgia State University, Atlanta, GA 30303}
\email{yzhao6@gsu.edu}

\date{\today}

\begin{abstract}

A well-known result of Ajtai Koml\'{o}s, Pintz, Spencer, and Szemer\'{e}di [J. Combin. Theory Ser. A 32, 321--335, 1982] states that every $k$-graph $H$ on $n$ vertices, with girth at least five, and average degree $t^{k-1}$ contains an independent set of size $c n (\log t)^{1/(k-1)}/t$ for some $c>0$. In this paper we show that an independent set of the same size can be found under weaker conditions allowing certain cycles of length 2, 3 and 4. 

Our work is motivated by a problem of Lo and Zhao, who asked for $k\ge 4$, how large of an independent set a $k$-graph $H$ on $n$ vertices necessarily has when its maximum $(k-2)$-degree $\D_{k-2}(H)\le dn$. (The corresponding problem with respect to $(k-1)$-degrees was solved by Kostochka, Mubayi, and Verstra\"ete [Random Structures \&
Algorithms 44, 224--239, 2014].)
In this paper we show that every $k$-graph $H$ on $n$ vertices with  $\D_{k-2}(H)\le dn$ contains an independent set of size $c (\frac nd \log\log \frac nd)^{1/(k-1)}$, and under additional conditions, an independent set of size $c (\frac nd \log \frac nd)^{1/(k-1)}$. 
The former assertion gives a new upper bound for the $(k-2)$-degree Tur\'an density of complete $k$-graphs.
\end{abstract} 
\maketitle

\section{Introduction}
\subsection{History and main results}

A \emph{$k$-uniform hypergraph} ($k$-graph) consists of a vertex set $V$ and an edge set $E$, which is a family of $k$-element subsets ($k$-subsets) of $V$.  Given a (hyper)graph $H$, an \emph{independent set} is a set of vertices that contains no edge. The \emph{independence number} $\a(H)$ is the size of a largest independent set of $H$. Many extremal problems in Combinatorics can be phrased as problems of finding independence numbers, which has found applications in Ramsey theory, number theory, discrete geometry, and coding theory.

Given a $k$-graph $H$ on $n$ vertices, how large is $\a(H)$ in terms of 
\emph{average (vertex) degree} $d= k |H| /n$? 
Tur\'an's theorem \cite{MR0018405} shows that $\a(H)\ge \frac{n}{d+1}$ for any graph $H$ on $n$ vertices with average degree $d$. In \cite{MR0297614} Spencer observed that every $k$-graph $H$ with $n$ vertices and average degree $d$ satisfies 
\begin{equation}
\label{Spencer}
\a(H)\ge \left(1 - \frac1k\right) \frac{n}{d^{1/(k-1)}}. 
\end{equation}
To see it, let $s= n/d^{1/(k-1)}$. By averaging, there exists an $s$-subset $S\subset V(H)$ spanning at most 
\[
|H| \frac{ \binom sk }{ \binom nk} \le |H| \left(\frac sn \right)^k = \frac{nd}{k} \left(\frac1{d^{1/(k-1)}}\right)^k = \frac{n}{k d^{1/(k-1)}} = \frac sk
\]
edges. Removing one vertex from each of these edges creates an independent subset of $S$ with at least $(1 - 1/k) s $ vertices.

A fundamental question on (hyper)graphs is \emph{under which condition, a $k$-graph on $n$ vertices with average degree $d$ contains an independent set of size $ \omega(d) {n}/{d^{1/(k-1)}} $ for some function $\omega(d)\to \infty$ as $d \to \infty$}?
A seminal result of Ajtai, Koml\'os, and Szemer\'edi \cite{MR611925} states that if $H$ is a triangle-free graph on $n$ vertices with average degree $d$, then $\a(H)\ge c(\log d) n /{d}$ for a constant $c>0$. (All logarithms in this paper have base $e=2.718...$)

Fix $k\ge 2$. A \emph{($k$-uniform) cycle} of length $\ell$ (in short, $\ell$-cycle) is a $k$-graph $(V, E)$ such that $E=\{e_1, \dots e_{\ell} \}$ and there exist distinct vertices $v_1, \dots, v_{\ell}\in V$ such that $v_i\in e_i\cap e_{i+1}$ for $i=1, \dots, \ell$ (where $e_{\ell+1}=e_1$). In particular, a $2$-\emph{cycle} consists of two edges that share at least two vertices. 
Ajtai, Koml\'{o}s, Pintz, Spencer, and Szemer\'{e}di \cite{MR657047} showed that a large independent set exists if the (hyper)graph contains no cycles of length at most four (\emph{i.e.}, girth at least five). Throughout the paper we will use standard $O, o, \Omega, \Theta$ notations assuming $n\to \infty$.
\begin{theorem}[Ajtai et al. \cite{MR657047}]
\label{thm:AKPSS}
Suppose $k\ge 2$ and $k\ll t\ll n$.
Let $H$ be a $k$-graph on $n$ vertices with average degree $t^{k-1}$. If $H$ contains no cycles of length at most four, then 
\begin{align}
\label{eq:aG}
\a(H)=\Omega\left( \frac{n}{t} (\log t)^{1/(k-1)} \right).
\end{align}
\end{theorem}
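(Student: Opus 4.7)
The plan is to exploit the girth hypothesis via the R\"odl nibble (semi-random method), the standard route to beat Spencer's bound by a power of $\log t$ in hypergraph independence problems. The girth-$\geq 5$ assumption is used structurally in two ways: the absence of 2-cycles forces the link $L_v := \{e \setminus \{v\} : v \in e \in E(H)\}$ of every vertex $v$ to be a \emph{linear} $(k-1)$-graph (any two of its edges meet in at most one vertex), while the absence of 3- and 4-cycles makes the 2-neighborhood of every vertex effectively tree-like. These facts decouple the relevant bad events in the random process and are what make the needed concentration possible.

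Concretely, I would iterate in rounds $i = 0, 1, \ldots, M-1$, maintaining a residual girth-$\geq 5$ $k$-graph $H_i$ on $n_i$ vertices of average degree approximately $t_i^{k-1}$ and a partial independent set $I_i \subseteq V(H) \setminus V(H_i)$, starting from $H_0 = H$, $n_0 = n$, $t_0 = t$, and $I_0 = \emptyset$. In round $i$, sample $X_i \subseteq V(H_i)$ by including each vertex independently with probability $p_i = \eps/t_i$ for a small constant $\eps > 0$, then delete one vertex from each edge of $H_i[X_i]$ to obtain an independent set $J_i$ of size $\Omega(p_i n_i)$. Adjoin $J_i$ to $I_i$, and let $H_{i+1}$ be the sub-hypergraph induced on the vertices of $V(H_i) \setminus J_i$ that do not complete an edge of $H_i$ with $J_i$. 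If the invariant is preserved for $M$ rounds (calibrated so that the cumulative vertex loss stays a constant fraction of $n$ and the $t_i$'s track $t$ up to $(1+o(1))$ factors), then summing yields
\[
|I_M| \;=\; \sum_{i=0}^{M-1} |J_i| \;=\; \Omega\!\left(\frac{n}{t}(\log t)^{1/(k-1)}\right),
\]
which is the claimed bound.

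The main obstacle, and the reason the argument requires the girth hypothesis in the first place, is showing that the invariant actually is preserved: one must establish round-by-round concentration of $n_i$, of the degree profile of $H_i$, and of the number of edges destroyed per nibble, around their expectations. The girth $\geq 5$ assumption is exactly what makes these events sufficiently weakly correlated for a martingale-type (Azuma or Talagrand) inequality to give sharp concentration, since pairs of edges sharing a vertex have disjoint ``reach'' into the rest of the neighborhood. Propagating this approximate regularity through $M$ rounds---so that the total gain over Spencer's trivial one-shot bound is genuinely a factor $(\log t)^{1/(k-1)}$ rather than $O(1)$---is the technical core of the proof.
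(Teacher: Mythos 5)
The central gap is that your recursion has no contraction step, so the final set $I_M = \bigcup_i J_i$ need not be independent. You define $H_{i+1}$ as an \emph{induced} sub-hypergraph of $H_i$ on vertices that do not complete an edge with $J_i$, so every edge of $H_{i+1}$ is a full-size $k$-edge of $H_i$. But consider an edge $e$ of $H_i$ with $|e \cap J_i| = 1$, say $e \cap J_i = \{v\}$; the other $k-1$ vertices of $e$ may all survive into $V(H_{i+1})$, yet $e$ itself is not an edge of $H_{i+1}$ (since $v \notin V(H_{i+1})$), and no vertex of $e \setminus \{v\}$ ``completes an edge with $J_i$'' (that would require $k-1$ of its vertices to lie in $J_i$). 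Hence nothing in your rule prevents all of $e\setminus\{v\}$ from being chosen into $J_{i+1}\cup J_{i+2}\cup\cdots$, after which $e \subseteq I_M$ is an edge of $H$ inside your ``independent'' set. The AKPSS algorithm avoids exactly this by replacing $e$ with the contracted residual $e\setminus\{v\}$, so that it persists as a lower-uniformity constraint in $H^{m+1}$; the paper's Proposition~\ref{cl:independence} hinges on this.

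Once contraction is included, two further items in your sketch break. First, the residual hypergraph is no longer $k$-uniform — it acquires edges of every uniformity $2,\ldots,k$ — so the invariant ``a residual $k$-graph of average degree $t_i^{k-1}$'' is wrong; the paper's key lemma (Lemma~\ref{lem:key}) must, and does, track a full profile of $\Delta_1(H^m_i)$ and $\Delta_{i-1}(H^m_i)$ for all $2\le i\le k$. Second, girth $\ge 5$ is \emph{not} preserved by contraction (two $k$-edges meeting in one vertex can contract to two edges meeting in two vertices, and new short cycles among contracted $2$- and $3$-edges appear); this is precisely why the paper introduces the BOUQUET property and proves its invariance under the nibble (Lemma~\ref{lem:structural}) rather than working with girth directly. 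Your appeal to ``tree-like $2$-neighborhoods'' does not propagate through the iteration. Finally, the constant sampling rate $p_i = \eps/t_i$ also differs from the paper's calibration $p_{m+1}=\gamma_{m+1}/t_m$ with $\gamma_{m+1}=\Theta\bigl((\log T)^{-(k-2)/(k-1)}\bigr)$, chosen so that $\E(|H_x[C]|)=1$ (Proposition~\ref{clm:degC}); with contraction in place, the lower-uniformity residual edges dominate the coverage probability, and it is this balance that makes the $M=\Theta(\log T)$ rounds of harvest sum up to exactly $\Theta\bigl(\tfrac{N}{T}(\log T)^{1/(k-1)}\bigr)$.
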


It was shown in \cite{MR1370956} that  for $k\ge 3$, the bound \eqref{eq:aG} holds even for $H$ containing no 2-cycle (known as \emph{linear $k$-graphs}). On the other hand, random hypergraphs show that \eqref{eq:aG} is sharp. Let $G^k(n, p)$ be a random $k$-graph on $n$ vertices, in which each $k$-set is chosen to be an edge independently with probability $p=(t/n)^{k-1}$. After removing (a small amount of) vertices from cycles of length at most four, we obtain a subhypergraph $H$ of $G^k(n, p)$ with girth at least five and 
with $\a(H)= O( \frac{n}{t} (\log t)^{1/(k-1)}) $.

\medskip
In this paper we study the independence number of $k$-graphs ($k\ge 3$) under various degree conditions.
Given a hypergraph $H$ and a set $S\subseteq V(H)$, the \emph{degree} $\deg_H(S)$ is the number of edges containing $S$. Given $0\le \ell< k$, let $\d_{\ell}(H)$ and $\D_{\ell}(H)$ denote the minimum and maximum of $\deg_H(S)$ over all $\ell$-subsets of $V(H)$, respectively. We often write $\d(H)$ for $\d_1(H)$ and $\Delta(H)$ for  $\Delta_1(H)$.

Theorem~\ref{thm:AKPSS} can be derived from its vertex degree version, which assumes $\D_1(H)\le 2 t^{k-1}$. Indeed, if the average degree of $H$ is at most $t^{k-1}$, then at least a half of its vertices have degree at most $2 t^{k-1}$, and one can find an independent set of the subhypergraph induced on these vertices. 

Kostochka, Mubayi, and Verstra\"ete \cite{MR3158630} considered the independence number of hypergraphs with given maximum $(k-1)$-degree and proved the following theorem, which also follows from the aforementioned result of \cite{MR1370956}.
\begin{theorem}[Kostochka, Mubayi, Verstra\"ete \cite{MR3158630}]
\label{thm:KMV}
Suppose  $3\le k\le n$ and $0< d< n/ (\log n)^{3(k-1)^2}$. Then every $k$-graph on $n$ vertices with $\D_{k-1}(H)\le d$ satisfies $\a(H) = \Omega\big( (\frac{n}d \log \frac nd)^{1/(k-1)}\big)$. 
\end{theorem}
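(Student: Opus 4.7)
\emph{Plan.} I would reduce Theorem~\ref{thm:KMV} to the extension of Theorem~\ref{thm:AKPSS} to linear $k$-graphs from \cite{MR1370956}. The assumption $\Delta_{k-1}(H)\le d$ yields, by iterated averaging, $\Delta_j(H) = O(n^{k-1-j}d)$ for every $1 \le j \le k-2$; in particular $\Delta_1(H) = O(n^{k-2}d)$. Setting $t := (n^{k-2}d)^{1/(k-1)}$, one finds $n/t = \Theta((n/d)^{1/(k-1)})$, while the hypothesis $d \le n/(\log n)^{3(k-1)^2}$ ensures $\log t = \Theta(\log(n/d))$. Consequently an AKPSS-type bound of the form $\Omega((n/t)(\log t)^{1/(k-1)})$ coincides with the target $\Omega(((n/d)\log(n/d))^{1/(k-1)})$ up to constants.

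\emph{Outline of the main step.} After discarding the $o(n)$ vertices whose degree exceeds, say, twice the average -- so that the remaining hypergraph has $\Delta_1 = O(t^{k-1})$ -- I would pass to an induced subhypergraph $H' = H[V']$ on $|V'| = \Omega(n)$ vertices that is linear. Let $G$ be the graph on $V(H)$ whose edges are the pairs $\{u,v\}$ with $\deg_H(\{u,v\}) \ge 2$: every independent set of $G$ induces a linear subhypergraph of $H$. Using $\sum_{\{u,v\}} \binom{\deg_H(\{u,v\})}{2} = O(n^{2k-4}d^2)$ (which follows from the codegree bound $\Delta_2(H) = O(n^{k-3}d)$) together with the slack $d \le n/(\log n)^{3(k-1)^2}$, one extracts such a $V'$ by a probabilistic argument. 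Finally, applying the main result of \cite{MR1370956} to $H'$ -- which is linear, on $\Omega(n)$ vertices, and has maximum degree $O(t^{k-1})$ -- yields an independent set of size $\Omega((|V'|/t)(\log t)^{1/(k-1)}) = \Omega(((n/d)\log(n/d))^{1/(k-1)})$.

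\emph{Main obstacle.} The principal difficulty is producing the linear subhypergraph $H'$. For $k \ge 3$ the number $O(n^{2k-4}d^2)$ of 2-cycles may far exceed $|H| = O(n^{k-1}d)$, so edge-removal strategies lose too many edges and vertex deletion is necessary. One must show that a random vertex sample (of density carefully tuned to the slack in the hypothesis on $d$), followed by local deletions that destroy the surviving conflicts, retains $\Omega(n)$ vertices while preserving the vertex-degree bound. Balancing these three constraints -- linearity, linear-size vertex set, and degree control -- is the crux, whereas the final application of the linear AKPSS bound from \cite{MR1370956} is essentially black-box.
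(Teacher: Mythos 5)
Your strategy — random vertex sampling, local deletion of 2-cycles, and then the linear-hypergraph form of the AKPSS theorem from \cite{MR1370956} — is indeed the route the paper has in mind (it is the same scheme the paper carries out in full detail for Theorem~\ref{thm:loglog}). But the proposal as written has an internal inconsistency at exactly the point you identify as the crux, and the closing display is not correct.

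The issue is the repeated claim that the linear induced subhypergraph $H'$ can be taken on $|V'|=\Omega(n)$ vertices. As your own estimate shows, the number of 2-cycles is of order $n^{2k-4}d^2$, which for $k\ge 3$ and $d\ge 1$ already dwarfs $n$, so a deletion argument on $V(H)$ itself cannot retain a constant fraction of the vertices, and neither can a sample of density $\Omega(1)$. To make the surviving 2-cycle count a small fraction of the surviving vertex count one must sample at density roughly $p=\Theta\big((n^{5-2k}/d^2)^{1/(2k-3)}\big)$, which is $o(1)$ throughout the relevant range, so that $|V'|=\Theta(pn)=o(n)$; compare the choice of $p$ in the paper's proof of Theorem~\ref{thm:loglog}, which is precisely this quantity. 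The argument nevertheless closes because of a scaling identity your write-up does not invoke: in the AKPSS bound, the ratio $pn/(p^{k-1}\Delta_1)^{1/(k-1)}=n/\Delta_1^{1/(k-1)}$ is independent of $p$, so shrinking the sample does not hurt the leading factor; what changes is only the logarithm, and the bound one actually gets from $H'$ is $\Omega\big((n/t)(\log(pt))^{1/(k-1)}\big)$, not the $\Omega\big((n/t)(\log t)^{1/(k-1)}\big)$ of your final line. The remaining essential step, which your outline leaves blank, is to verify that $\log(pt)=\Theta(\log(n/d))$ for this $p$ — a short calculation, and the place where the hypothesis $d\le n/(\log n)^{3(k-1)^2}$ does its real work by keeping $pt$ large enough for the AKPSS machinery. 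Finally, the preliminary observation that $\log t=\Theta(\log(n/d))$ for the unsampled $t=(n^{k-2}d)^{1/(k-1)}$ is actually false near $d=n/(\log n)^{3(k-1)^2}$, where $\log t\sim\log n$ while $\log(n/d)$ is only polylogarithmic; this error points in the harmless direction, but it signals that the logarithm bookkeeping must be redone with $pt$ rather than $t$, and that the constraint of a linear-size vertex set should simply be dropped.
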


Note that $\D_{k-1}(H)\le d$ implies that $\D_{\ell}(H) \le O(d n^{k-\ell-1})$ for every $\ell \le k-1$; in particular, $\D_1(H)\le d n^{k-2}$. Let $t>0$ satisfy $t^{k-1}= d n^{k-2}$. Then $n/t = (n/d)^{1/(k-1)}$ and consequently, the bounds for $\alpha(H)$ in Theorems~\ref{thm:AKPSS} and~\ref{thm:KMV} are about the same. We further note that in \cite{MR3158630} two constructions were given to show that Theorem~\ref{thm:KMV} is best possible up to the constant factor hidden in $\Omega$: one construction is a blow-up of a partial Steiner system considered in~\cite{MR1248185}; the other one is the random $k$-graph $G^k(n, p)$. 

It is natural to ask if one can strengthen Theorem~\ref{thm:KMV} by replacing  $\D_{k-1}(H)\le d$ with $\D_{\ell}(H) \le d n^{k-\ell-1}$ for some $\ell \le k-2$. Note that we cannot let $\ell = 1$ (without any additional condition) because the union of disjoint cliques gives a counterexample. The following problem was raised (implicitly) by Lo and Zhao when they studied the $\ell$-degree Tur\'an density of complete $k$-graphs \cite{MR3807943}.
\begin{problem}[Lo and Zhao \cite{MR3807943}]
\label{pro:LZ}
Given $2\le \ell < k$ and $0< d\ll n$, let $H$ be a $k$-graph on $n$ vertices such that $\D_{\ell}(H) \le d n^{k-\ell-1}$. Is it true that $\alpha(H)/ (\frac{n}d )^{1/(k-1)} \to \infty$ and even
$\alpha(H) = \Omega( (\frac{n}d \log\frac{n}d)^{1/(k-1)})$? 
\end{problem}

In \cite{2021arXiv210104258B,MR3846882} a $k$-graph $H$ on $n$ vertices with $\D_{\ell}(H)\le \lambda$ is called an \emph{$(n, k, \ell, \lambda)$-system}.
Tian and Liu \cite{MR3846882} and later Bohman, Liu, and Mubayi  \cite{2021arXiv210104258B} 
studied the independence number of $(n, k, \ell, \lambda)$-system and obtained a tight bound when $\lambda =o(n)$. 
Unfortunately these results offer no help on Problem~\ref{pro:LZ} because $d n^{k-\ell-1}> n$ when $\ell \le k-2$.

Problem~\ref{pro:LZ} seems much harder when $2\le \ell \le k-2$ than the case of $\ell=k-1$.
Indeed, when $\D_{k-1}(H)\le d$, we can use the {deletion method} to delete all 2-cycles and apply the result of \cite{MR1370956} to obtain a large independent set.
In contrast, knowing $\D_{\ell}(H)$ for some $\ell < k-1$ gives no control on the number of 2-cycles with two edges sharing $k-1$ vertices.  Nevertheless, we are able to show that $\a(H)/ (n/d)^{1/(k-1)}\to \infty$ for all $k$-graphs $H$ on $n$ vertices with $\D_{k-2}(H)\le dn$. 

\begin{theorem}
\label{thm:loglog}
Let $k\ge 4$ and $0< d< n$. 
If $H$ is a $k$-graph on $n$ vertices with $\D_{k-2}(H)\le dn$, then $\a(H) = \Omega \left( (\frac nd \log\log \frac nd )^{\frac1{k-1}} \right)$.
\end{theorem}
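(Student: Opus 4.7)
The plan is to reduce $H$ to a sub-hypergraph of girth at least five on which Theorem~\ref{thm:AKPSS} can be applied, accepting a $(\log t/\log\log t)^{1/(k-1)}$-factor loss relative to the girth-five bound. From $\Delta_{k-2}(H)\le dn$ a standard double counting gives $\Delta_j(H)=O(dn^{k-1-j})$ for every $1\le j\le k-2$; in particular $\Delta_1(H)=O(dn^{k-2})$ and $|E(H)|=O(dn^{k-1})$. Set $t$ by $t^{k-1}:=dn^{k-2}$, so $n/t=(n/d)^{1/(k-1)}$; then Theorem~\ref{thm:AKPSS} would give $\alpha(H)=\Omega((n/t)(\log t)^{1/(k-1)})$ at once were the girth of $H$ at least five.

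The only short-cycle structure not controlled by the $(k-2)$-codegree hypothesis is the \emph{bouquet}: the edges through a fixed $(k-1)$-set $T$ form a pencil of size $\deg_H(T)$, which may be as large as $dn$. The proof then splits naturally into two steps. In the BOUQUET step, I fix a threshold $r$ and delete one vertex from each $T$ with $\deg_H(T)\ge r$; since $\sum_T\deg_H(T)=k|E(H)|=O(dn^{k-1})$ there are at most $O(dn^{k-1}/r)$ such heavy $T$, so this costs $o(n)$ vertices provided $r\gg t^{k-1}$. The resulting sub-hypergraph $H_1$ has $\Delta_{k-1}(H_1)<r$ while retaining $\Delta_{k-2}(H_1)\le dn$.

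In the UNCROWDED step, I sample each vertex of $H_1$ independently with probability $p=\tau/t$ for $\tau\asymp(\log(n/d))^{1/(k-1)}$, obtaining $U$ of size $\asymp \tau(n/d)^{1/(k-1)}$ and expected average degree $\asymp \tau^{k-1}$. The codegree bounds translate into explicit estimates for the expected numbers of short cycles in $H_1[U]$: 2-cycles with intersection of size in $\{2,\dots,k-2\}$, together with 3- and 4-cycles, have expected count polylogarithmic in $n/d$ (in particular $o(|U|)$), while 2-cycles through $(k-1)$-sets have expected count $\asymp rn\tau^{k+1}/t^2$ which must also be $o(|U|)$ — this drives the choice of $r$. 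A standard alteration then produces $H^*\subseteq H_1[U]$ of girth $\ge 5$ on $(1-o(1))|U|$ vertices with $\Delta_1(H^*)=O(\tau^{k-1})$, and Theorem~\ref{thm:AKPSS} delivers $\alpha(H^*)=\Omega((|U|/\tau)(\log\tau)^{1/(k-1)})=\Omega((n/d)^{1/(k-1)}(\log\log(n/d))^{1/(k-1)})$.

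The principal difficulty is the joint calibration of $r$ and $\tau$: the BOUQUET constraint $r\gg t^{k-1}$ clashes with the UNCROWDED constraint $r\ll t/\tau^k$, so the two-step procedure cannot succeed in a single pass. I therefore expect the actual argument either to iterate the BOUQUET-then-sample procedure $\Theta(\log\log t)$ times — each iteration shrinking the effective threshold and the maximum degree — or to invoke a strengthened UNCROWDED lemma tolerating a controlled surplus of 2-cycles. Either way, the $\log\log$ (rather than $\log$) factor in the final bound reflects that bouquets can be cheaply suppressed only down to a polylogarithmic threshold, and the restriction $k\ge 4$ is needed for the competing exponents in $t$ to align favorably.
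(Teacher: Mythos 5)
Your proposal correctly identifies the key structural obstruction (bouquets through $(k-1)$-sets, uncontrolled by the $(k-2)$-degree hypothesis), but it stops at a genuine gap that you yourself flag: the joint calibration of the bouquet-deletion threshold $r$ and the sampling scale $\tau$ is numerically impossible in a single "delete heavy $(k-1)$-sets, then sample, then apply Theorem~\ref{thm:AKPSS}" pass, and you leave the resolution as one of two unimplemented speculations. The proposal as written therefore does not prove the theorem.

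The paper's resolution is closest to your second speculation, but its shape is quite different from what you sketch. After a single vertex-sampling step with probability $p=n^{-(2k-5)/(2k-3)}d^{-2/(2k-3)}$ (so that $|U|\asymp m:=(n/d)^{2/(2k-3)}$ and $\Delta_1(H[U])=O(m^{1/2})$), one deletes a vertex from every $(2,\ell)$-cycle with $2\le\ell\le k-2$ and from every high-degree vertex — note these counts are $o(|U|)$ precisely because of the $(k-2)$-degree hypothesis; $(2,k-1)$-cycles are \emph{not} deleted. The heart of the argument is Lemma~\ref{lem:m}, whose proof splits the resulting $k$-graph into a $(k-1)$-uniform hypergraph $G_1$ of heavy $(k-1)$-sets (those of degree at least $m^{1/(2k-2)}/(\log m)^\beta$) and a $k$-graph $G_2$ of edges avoiding heavy $(k-1)$-sets, and applies the Bohman--Liu--Mubayi theorem (Theorem~\ref{thm:BLM}) to the \emph{mixed} hypergraph $G_1\cup G_2$. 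The $\log\log$ factor appears because the auxiliary parameter $\omega=d(\log m/D)^{(k_2-1)/(k_1-1)}$ in that theorem is only polylogarithmic under these degree constraints, so $\log\omega\asymp\log\log m$. Your intuition — "bouquets can be cheaply suppressed only down to a polylogarithmic threshold" — is heuristically in the right direction (the polylogarithmic object here is $\omega$, not the degree threshold itself, which is $m^{1/(2k-2)}/(\log m)^\beta$), and the first speculation ($\Theta(\log\log t)$ iterations of a two-step procedure) is not what the paper does.

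To close the gap you would need a uniform or non-uniform version of the semi-random method that (a) tolerates a $(k-1)$-graph component carrying the heavy $(k-1)$-sets or, equivalently, a controlled number of $(2,k-1)$-cycles, and (b) outputs an independent set whose size scales with $(\log\omega)^{1/(k-1)}$ for a parameter $\omega$ you can force to be $\Omega(\mathrm{polylog})$. This is exactly the role of Theorem~\ref{thm:BLM}, and without it (or an equivalent) the plan you outline cannot be completed.
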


The main tool in the proof of Theorem~\ref{thm:loglog} is a recent result of Bohman, Liu, and Mubayi  \cite{2021arXiv210104258B}.

\medskip
As a supportive evidence of Problem~\ref{pro:LZ}, we obtain the desired independence number  $\alpha(H) = \Omega( (\frac{n}d \log\frac{n}d)^{1/(k-1)})$
after forbidding a small range of values for $(k-1)$-degrees in Theorem~\ref{th:application}. If we in addition assume that $H$ does not contain a certain family of 4-cycles, then we can reduce the forbidden range to a logarithmic size.\footnote{In fact, we can reduce the upper bound $n^{\frac{k-2}{k-1}}d^{\frac{1}{k-1}}$ to $n^{\frac{k-2}{k-1}}d^{\frac{1}{k-1}}/(\log (n/d))^{\frac{1}{k-1}}$ both in Conditions (1) and (2) and the proof is similar. We choose not to present this stronger version because the gap between the lower and (new) upper bounds is still in the same order of magnitude while the proof becomes more technical.} We say that a $4$-cycle with edges $e_1,e_2,e_3,e_4$ has \emph{clean intersections} if $e_1\cap e_3=\emptyset$ and $e_2\cap e_4=\emptyset$. See Figure~\ref{fig:cleanintersection} for 4-uniform 4-cycles with clean intersections.

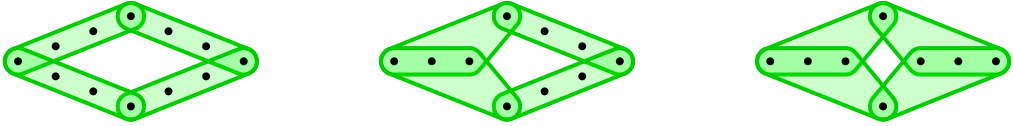
\begin{figure}[ht]
	\begin{tikzpicture}[scale=1.0]
		
		\coordinate (x1) at (-6.5,0);
		\coordinate (x2) at (-6,0.2);
		\coordinate (x3) at (-5.5,0.4);
		\coordinate (x4) at (-5,0.6);
		\coordinate (x5) at (-4.5,0.4);
		\coordinate (x6) at (-4,0.2);
		\coordinate (x7) at (-3.5,0);
		\coordinate (x8) at (-4,-0.2);
		\coordinate (x9) at (-4.5,-0.4);
		\coordinate (x10) at (-5,-0.6);
		\coordinate (x11) at (-5.5,-0.4);
		\coordinate (x12) at (-6,-0.2);
		\coordinate (y1) at (-1.5,0);
		\coordinate (y2) at (-1,0);
		\coordinate (y3) at (-0.5,0);
		\coordinate (y4) at (0,0.6);
		\coordinate (y5) at (0.5,0.4);
		\coordinate (y6) at (1,0.2);
		\coordinate (y7) at (1.5,0);
		\coordinate (y8) at (1,-0.2);
		\coordinate (y9) at (0.5,-0.4);
		\coordinate (y10) at (0,-0.6);
		\coordinate (z1) at (3.5,0);
		\coordinate (z2) at (4,0);
		\coordinate (z3) at (4.5,0);
		\coordinate (z4) at (5,0.6);
		\coordinate (z5) at (5.5,0);
		\coordinate (z6) at (6,0);
		\coordinate (z7) at (6.5,0);
		\coordinate (z8) at (5,-0.6);

		\redge{(x4)}{(x3)}{(x2)}{(x1)}{5pt}{1.5pt}{green!80!black}{green,opacity=0.2};
		\redge{(x7)}{(x6)}{(x5)}{(x4)}{5pt}{1.5pt}{green!80!black}{green,opacity=0.2};
		\redge{(x10)}{(x9)}{(x8)}{(x7)}{5pt}{1.5pt}{green!80!black}{green,opacity=0.2};
		\redge{(x1)}{(x12)}{(x11)}{(x10)}{5pt}{1.5pt}{green!80!black}{green,opacity=0.2};
		
		\redge{(y4)}{(y3)}{(y2)}{(y1)}{5pt}{1.5pt}{green!80!black}{green,opacity=0.2};
		\redge{(y7)}{(y6)}{(y5)}{(y4)}{5pt}{1.5pt}{green!80!black}{green,opacity=0.2};
		\redge{(y10)}{(y9)}{(y8)}{(y7)}{5pt}{1.5pt}{green!80!black}{green,opacity=0.2};
		\redge{(y1)}{(y2)}{(y3)}{(y10)}{5pt}{1.5pt}{green!80!black}{green,opacity=0.2};
		
		\redge{(z4)}{(z3)}{(z2)}{(z1)}{5pt}{1.5pt}{green!80!black}{green,opacity=0.2};
		\redge{(z7)}{(z6)}{(z5)}{(z4)}{5pt}{1.5pt}{green!80!black}{green,opacity=0.2};
		\redge{(z8)}{(z5)}{(z6)}{(z7)}{5pt}{1.5pt}{green!80!black}{green,opacity=0.2};
		\redge{(z1)}{(z2)}{(z3)}{(z8)}{5pt}{1.5pt}{green!80!black}{green,opacity=0.2};
		
		\fill (x1) circle (1.5pt);
		\fill (x2) circle (1.5pt);
		\fill (x3) circle (1.5pt);
		\fill (x4) circle (1.5pt);
		\fill (x5) circle (1.5pt);
		\fill (x6) circle (1.5pt);
		\fill (x7) circle (1.5pt);
		\fill (x8) circle (1.5pt);
		\fill (x9) circle (1.5pt);
		\fill (x10) circle (1.5pt);
		\fill (x11) circle (1.5pt);
		\fill (x12) circle (1.5pt);
		
		\fill (y1) circle (1.5pt);
		\fill (y2) circle (1.5pt);
		\fill (y3) circle (1.5pt);
		\fill (y4) circle (1.5pt);
		\fill (y5) circle (1.5pt);
		\fill (y6) circle (1.5pt);
		\fill (y7) circle (1.5pt);
		\fill (y8) circle (1.5pt);
		\fill (y9) circle (1.5pt);
		\fill (y10) circle (1.5pt);
		
		\fill (z1) circle (1.5pt);
		\fill (z2) circle (1.5pt);
		\fill (z3) circle (1.5pt);
		\fill (z4) circle (1.5pt);
		\fill (z5) circle (1.5pt);
		\fill (z6) circle (1.5pt);
		\fill (z7) circle (1.5pt);
		\fill (z8) circle (1.5pt);
		
	\end{tikzpicture}
	\caption{$4$-cycles with clean intersections}
	\label{fig:cleanintersection}
\end{figure}

\begin{theorem}\label{th:application}
Let $k\geq 4$ and  $0< d\leq n$. Suppose $H$ is a $k$-graph on $n$ vertices with $\D_{k-2}(H)\leq dn$ satisfying \textbf{either} of the following conditions:
\begin{enumerate}
\item there exists $\eps>0$ such that $\deg(S) \not\in \left( n^{ \frac{k-2}{k-1}-\eps} d^{ \frac{1}{k-1}+\eps}, n^{ \frac{k-2}{k-1} } d^{\frac{1}{k-1} }  \right)$ for every $S \in \binom{V(H)}{k-1}$;
\item $\deg(S)\not\in \left(n^{\frac{k-2}{k-1}}d^{\frac{1}{k-1}}/\log(\frac{n}{d})^{k+1}, n^{\frac{k-2}{k-1}}d^{\frac{1}{k-1}} \right)$ for every $S \in \binom{V(H)}{k-1}$, and $H$ has no 4-cycle with clean intersections.
\end{enumerate}
Then $\a(H)=\Omega\left((\frac{n}{d}\log \frac nd)^{1/k-1}\right)$, where the constant factor may depend on $\eps$ in Case~(1).
\end{theorem}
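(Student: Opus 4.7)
The plan is to mimic the random vertex-sampling argument used in the proof of Theorem~\ref{thm:KMV}, adapted to the weaker hypothesis $\D_{k-2}(H)\le dn$. Choose $p = c(n/d)^{1/(k-2)}/n$ for an appropriate constant $c$ (depending on $\eps$ in Case~(1)), and let $X\subseteq V(H)$ be the random subset obtained by including each vertex independently with probability $p$, so that $|X| = \Theta((n/d)^{1/(k-2)})$ with high probability. Since $\a(H)\ge \a(H[X])$, the goal reduces to producing a large independent set in the induced subhypergraph.

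From $\D_{k-2}(H)\le dn$ one derives $\D_2(H)=O(dn^{k-3})$, so the expected number of $2$-cycles in $H[X]$ with two edges sharing exactly two vertices is $O(1)$ and they can be destroyed by deleting $O(1)$ vertices. The main difficulty is the $2$-cycles whose two edges share $k-1$ vertices (i.e.\ through a common $(k-1)$-subset): because $\D_{k-1}(H)$ may be as large as $dn$ with no further restriction, the expected number of such $2$-cycles in $H[X]$ can exceed $|X|$, obstructing a direct application of the linear version of Theorem~\ref{thm:AKPSS}. Conditions~(1) and~(2) are designed precisely to control these $2$-cycles by forbidding ``middle-range'' $(k-1)$-degrees: each $(k-1)$-set $S$ is either \emph{light} (with $\deg(S)\le d_1$) or \emph{heavy} (with $\deg(S)\ge \tau$), and the total number of heavy $(k-1)$-sets is at most $k|H|/\tau$ by $\sum_S\deg(S)=k|H|$.

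For Case~(1), the polynomial gap $(n/d)^{\eps}$ between $d_1$ and $\tau$ allows a first/second-moment computation to show that the expected number of heavy $(k-1)$-sets appearing entirely inside $X$ is $o(|X|)$; removing one vertex from each such heavy set in $X$ yields a subset $X'\subseteq X$ with $|X'|=(1-o(1))|X|$ in which every $(k-1)$-subset of $X'$ has degree in $H[X']$ coming only from light $(k-1)$-sets of $H$. One further small deletion breaks the remaining few $2$-cycles, producing a linear induced subhypergraph $H[X'']$ on $\Theta((n/d)^{1/(k-2)})$ vertices with average degree $\Theta((n/d)^{1/(k-2)})$, equivalently with $t^{k-1}$ where $t = \Theta((n/d)^{1/((k-1)(k-2))})$. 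Applying the linear version of Theorem~\ref{thm:AKPSS} from \cite{MR1370956} gives $\a(H[X''])=\Omega\bigl((|X''|/t)(\log t)^{1/(k-1)}\bigr)=\Omega\bigl((n/d)^{1/(k-1)}(\log(n/d))^{1/(k-1)}\bigr)$, as required.

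For Case~(2) the gap is only $(\log(n/d))^{k+1}$, which is too small for the direct deletion argument above. Here the hypothesis that $H$ has no $4$-cycle with clean intersections enters: two heavy $(k-1)$-sets $S_1,S_2$ with $|S_1\cap S_2|=k-2$ and two common extensions $v,w$ would produce edges $S_1\cup\{v\},\,S_2\cup\{v\},\,S_1\cup\{w\},\,S_2\cup\{w\}$ forming a $4$-cycle with clean intersections. Forbidding clean $4$-cycles therefore bounds how densely the heavy $(k-1)$-sets can ``overlap'', and a double-counting argument shows that the effective number of heavy $(k-1)$-sets that contribute problematically to $H[X]$ drops by a factor of at least $(\log(n/d))^{k+1}$, exactly the factor lost from the shrunken gap. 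The rest of the argument is identical to Case~(1). The main obstacle is this last step: making the double-counting quantitative enough so that the no-clean-$4$-cycle hypothesis recovers exactly the polylogarithmic loss in the gap, which is the key technical novelty of Case~(2).
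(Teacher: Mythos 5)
Your plan fails at the deletion step in a way that no retuning of parameters can fix. In Case~(1) you sample with $|X| = \Theta\left((n/d)^{1/(k-2)}\right)$ and propose to remove one vertex from each ``heavy'' $(k-1)$-set, i.e.\ each $S$ with $\deg_H(S) \ge \tau := n^{\frac{k-2}{k-1}}d^{\frac{1}{k-1}}$, that lands inside $X$, asserting their expected number is $o(|X|)$. But the only bound available on the number of heavy $(k-1)$-sets is $k|H|/\tau = O\left(n^{\,k-1-\frac{k-2}{k-1}}\,d^{\frac{k-2}{k-1}}\right)$, and this can be attained up to constants. With $p=|X|/n$, the expected number of heavy sets landing entirely in $X$ is then $p^{k-1}\cdot k|H|/\tau = \Theta\left(|X|^{k-1}(d/n)^{\frac{k-2}{k-1}}\right)$, and this exceeds $|X|$ precisely when $|X| > (n/d)^{1/(k-1)}$; so for any sample large enough to contain the target independent set, you cannot afford to delete one vertex per heavy set. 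With your $|X|=(n/d)^{1/(k-2)}$ and $k=4$ the heavy-set count is $\Theta\left((n/d)^{5/6}\right)$ against $|X|=(n/d)^{1/2}$. The width of the forbidden interval in Condition~(1) controls only the $(k-1)$-\emph{degree} of the light part (i.e.\ $\D_{k-1}(H_k)$ below), not the \emph{number} of heavy sets, so the polynomial gap does not help here.

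The paper sidesteps this by not deleting heavy $(k-1)$-sets at all: it promotes them to edges of a $(k-1)$-uniform hypergraph $H_{k-1}$, lets $H_k\subseteq H$ be the edges containing no heavy $(k-1)$-subset, and works with the non-uniform $\cH=\{H_{k-1},H_k\}$. An independent set of $\cH$ is automatically independent in $H$, since every edge of $H\setminus H_k$ contains a $(k-1)$-edge of $H_{k-1}$. The sample size used is $pn=(n/d)^{1/(k-1)+\delta}$ for small $\delta>0$, substantially smaller than yours, and the final appeal is to the paper's Theorem~\ref{thm:main} for $\bouquet$ hypergraphs rather than the linear $k$-uniform theorem of \cite{MR1370956}. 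That distinction is decisive in Case~(2): there the $(2,k-1)$-cycles inside $H_k$ cannot be killed by sampling and deletion because the gap is only polylogarithmic, and it is the $\bouquet$ framework's tolerance of such $2$-cycles (Property~ii) that carries the argument, while the no-clean-$4$-cycle hypothesis is used to verify Property~iv) of $\bouquet$, not to run a heavy-set double count. Without a technical theorem that permits the surviving $2$-, $3$- and $4$-cycle structure, the uncrowded-hypergraph route you sketch has no way to finish.
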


Theorem~\ref{th:application} follows from our main technical result, Theorem~\ref{thm:main}, presented near the end of this section. 

\subsection{Other results and technical theorem}

Problem~\ref{pro:LZ} has an immediate application on hypergraph Tur\'an densities.
For a $k$-graph $F$, let $\ex(n, F)$ be the largest number of edges in a $k$-graph on $n$ vertices containing no copy of $F$ (namely, $F$-free). 
Determining or estimating $\ex(n, F)$ is a classical problem in extremal combinatorics.
In \cite{MR3248027, MZ07}, the following variant of the Tur\'an problem was introduced.
 For $0\le \ell\le k-1$, we denote by $\ex_{\ell}(n, F)$ the maximum minimum $\ell$-degree $\d_{\ell}(H)$ among all $F$-free $k$-graphs $H$ on $n$ vertices and let $\pi_{\ell}(F)= \lim_{n\to \infty} \ex_{\ell}(n, F)/\binom{n}{k- \ell}$ be the corresponding Tur\'an density.
It is well-known that 
\begin{align}
\label{eq:pi}
\pi_0(F) = \pi_1(F) \ge \pi_2(F) \cdots \ge \pi_{k-1}(F)
\end{align}
holds for all $k$-graphs $F$. Let $K_t^k$ denote the complete $k$-graph on $t$ vertices.
Improving the bounds in \cite{MR3158267,MR3248027}, Lo and Zhao \cite{MR3807943} obtained asymptotically tight bounds for $\pi_{k-1}(K_t^k)$ by showing that $\pi_{k-1}(K^k_t) = 1 - \Theta( \log t/ t^{k-1})$ as $t\to \infty$. If the second question in  Problem~\ref{pro:LZ} has a positive answer, then $\pi_{\ell}(K^k_t) = 1 - \Theta( \log t/ t^{k-1})$ for $2\le \ell\le k-2$.

As a corollary of Theorem~\ref{thm:loglog}, we improve the upper bound for the $(k-2)$-degree Tur\'an density of $K_t^k$ (for large $t$). Previously it was only known that $\pi_{k-2}( K_t^k)\le \pi_{0}( K_t^k) \le 1 - 1/\binom{t-1}{k-1}$ by \eqref{eq:pi} and a result of de Caen \cite{MR734038}.
\begin{corollary}
\label{cor:k-2}
Given $k\ge 4$, there exists $c_0>0$ such that $\pi_{k-2}( K_t^k) \le 1 - c_0 \log\log t/ t^{k-1}$.
\end{corollary}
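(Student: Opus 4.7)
The plan is to derive Corollary~\ref{cor:k-2} from Theorem~\ref{thm:loglog} via the standard complementation trick. Given a $K_t^k$-free $k$-graph $H$ on $n$ vertices, I would consider the complement $k$-graph $\overline{H}$ on $V(H)$ whose edges are precisely the $k$-subsets that are \emph{non-edges} of $H$. A copy of $K_t^k$ in $H$ is exactly an independent set of size $t$ in $\overline{H}$, and for every $(k-2)$-subset $S$ one has $\deg_H(S)+\deg_{\overline{H}}(S)=\binom{n-k+2}{2}$. Arguing contrapositively, I would assume
\[
\delta_{k-2}(H) \ge \left(1 - c_0\,\frac{\log\log t}{t^{k-1}}\right)\binom{n-k+2}{2}
\]
for a small constant $c_0>0$ (to be chosen), and aim to produce a $K_t^k$-clique in $H$ by exhibiting an independent set of size $t$ in $\overline{H}$.

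Under the assumption above, $\Delta_{k-2}(\overline{H})\le d\,n$ with $d := c_0\,n\log\log t/(2\,t^{k-1})$. Provided $t$ is large enough that $d<n$, Theorem~\ref{thm:loglog} applied to $\overline{H}$ delivers
\[
\a(\overline{H}) = \Omega\!\left(\left(\frac{n}{d}\,\log\log\frac{n}{d}\right)^{1/(k-1)}\right).
\]
Since $n/d = 2\,t^{k-1}/(c_0\log\log t)$ is polynomial in $t$ up to a slow-growing factor, for $t$ large we have $\log\log(n/d)\ge \tfrac12\log\log t$, hence $(n/d)\log\log(n/d)\ge t^{k-1}/c_0$. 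This simplifies the bound to $\a(\overline{H})=\Omega(c_0^{-1/(k-1)}\,t)$, and choosing $c_0$ small enough relative to the implicit constant of Theorem~\ref{thm:loglog} forces $\a(\overline{H})\ge t$, yielding a $K_t^k\subseteq H$, contradiction. Dividing through by $\binom{n}{2}$ and passing to the limit $n\to\infty$ gives $\pi_{k-2}(K_t^k)\le 1 - c_0\log\log t/t^{k-1}$.

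The only delicate step is the $\log\log$ bookkeeping after the substitution: one must confirm that $\log\log(n/d)$ remains $\Theta(\log\log t)$, which is immediate because $n/d$ is polynomial in $t$ divided by a $\log\log t$ factor, so $\log\log(n/d)=\log\log t+O(1)$. Everything else reduces to the definitions of the $(k-2)$-degree Tur\'an density and of $\overline{H}$, together with the tautology that $K_t^k$-cliques in $H$ are independent sets in $\overline{H}$; I do not anticipate any substantive obstacle beyond tracking the constant $c_0$.
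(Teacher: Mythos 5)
Your proposal is correct and follows essentially the same approach as the paper's own proof: take the complement to convert the minimum $(k-2)$-degree condition into a maximum $(k-2)$-degree bound, apply Theorem~\ref{thm:loglog}, and then verify that $\log\log(n/d) = \Theta(\log\log t)$ so that the resulting independence number is at least $t$ once $c_0$ is chosen small enough in terms of the implicit constant of Theorem~\ref{thm:loglog} (the paper simply sets $c_0 = c_1^{k-1}$ explicitly, but this is the same calibration you describe).
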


The following theorem is a corollary of our Theorem~\ref{thm:main} and its proof is very similar to the one of Theorem~\ref{th:application}.
\begin{theorem}
\label{cor2}
Given $k\ge 4$, $0<\eps\le 1$, suppose $(\log n)^{12 k/\eps}\leq t \leq n^{1/(1+\eps)}$ and $n$ is sufficiently large. 
Let $H$ be a $k$-graph on $n$ vertices with
\begin{align}
\label{eq:Cor2}
\D_1(H)\le t^{k-1}, \quad \D_i(H)\le t^{k-i-\eps} \text{  for  } 2\le i\le k-2, \quad \text{and} \quad \D_{k-1}(H)\le t/ (\log t)^{k+1}.
\end{align}
If $H$ contains no $4$-cycles with clean intersections, then $\a(H)= \Omega\left(\frac nt (\log t)^{1/(k-1)} \right) $ where the constant factor may depend on $k$ and $\eps$.
\end{theorem}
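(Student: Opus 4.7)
The plan is to derive Theorem~\ref{cor2} from the main technical Theorem~\ref{thm:main} in the same spirit as the derivation of Theorem~\ref{th:application}. Theorem~\ref{thm:main} supplies the target independent set of size $\Omega(n(\log t)^{1/(k-1)}/t)$ provided $\D_1(H)\le t^{k-1}$ together with quantitative upper bounds on the numbers of 2-, 3-, and 4-cycles of $H$ grouped by their intersection patterns. The task therefore reduces to verifying these cycle-count hypotheses from the degree conditions \eqref{eq:Cor2} and the absence of 4-cycles with clean intersections, and then invoking Theorem~\ref{thm:main}.

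To bound 2-cycles I would count, for each $2\le j\le k-1$, the pairs $\{e_1,e_2\}$ with $|e_1\cap e_2|\ge j$ by first picking the $j$-subset and then picking two edges through it, giving at most $\binom{n}{j}\binom{\D_j(H)}{2}$. Plugging in the bounds from \eqref{eq:Cor2} and using $t\le n^{1/(1+\eps)}$, each of these is dominated by the edge count $|H|\le n t^{k-1}/k$ with a polynomial-in-$t$ or $(\log t)^{-c}$ slack factor, sufficient for Theorem~\ref{thm:main}. For 3-cycles a similar enumeration—pick $e_1$, then pick $e_2$ through a chosen vertex of $e_1$, then pick $e_3$ through a chosen intersection with $e_1\cup e_2$—produces a bound that is a product of $\D_{j_i}(H)$ values along the intersection profile and is again controlled by \eqref{eq:Cor2}.

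For 4-cycles the analogous enumeration does not by itself give a strong enough estimate, and here the assumption that $H$ contains no 4-cycle with clean intersections becomes essential: in every 4-cycle $(e_1,e_2,e_3,e_4)$ of $H$ at least one of $e_1\cap e_3$ and $e_2\cap e_4$ is nonempty, so one may prescribe an additional shared vertex for free. This extra constraint effectively yields an extra factor of $\D_i(H)/n$ for some $2\le i\le k-2$, and the middle-range bounds $\D_i(H)\le t^{k-i-\eps}$ then cut the 4-cycle count down to $o(|H|)$, uniformly over all intersection profiles.

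With all cycle-count conditions verified, I would conclude by invoking Theorem~\ref{thm:main}, possibly after first deleting the few vertices that happen to lie in disproportionately many short cycles, and extracting the claimed independent set. The quantitative assumptions $t\ge (\log n)^{12k/\eps}$ and $t\le n^{1/(1+\eps)}$ are what ensure the $n^{-\eps}$ and polylogarithmic losses incurred along the way are absorbed by the main term. The main obstacle will be the 4-cycle step: the no-clean-intersection hypothesis has to be converted into a quantitative saving that holds uniformly across every intersection profile of the four edges, and that saving must combine with the $\D_{k-1}(H)\le t/(\log t)^{k+1}$ condition to meet the cycle-count threshold required by Theorem~\ref{thm:main}.
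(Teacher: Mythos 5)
Your overall strategy -- random sampling followed by deletion, then invoke Theorem~\ref{thm:main} -- matches the paper, but two of your central steps rest on a misreading of Theorem~\ref{thm:main}'s hypotheses, which leads you to try to solve a problem that does not need to be solved and to miss one that does.

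First, Theorem~\ref{thm:main} does not take cycle-count bounds as hypotheses; it requires the induced subhypergraph to have the $\bouquet$ property, a structural condition that \emph{forbids} certain configurations outright (no $(2,\ell)$-cycles for $2\le\ell\le k-2$, no linear $3$-cycles with at most one $H_2$-edge, no $4$-cycles with clean intersections, etc.). Since $H$ here is a single $k$-uniform hypergraph with $k\ge 4$, conditions~i) and~v) are vacuous, condition~iv) is inherited \emph{automatically} by every subhypergraph of $H$ from the standing assumption that $H$ has no $4$-cycles with clean intersections, and the only things to kill are $(2,\ell)$-cycles for $2\le\ell\le k-2$ (condition~ii) and linear $3$-cycles (condition~iii). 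So the entire paragraph in which you try to count non-clean $4$-cycles by ``prescribing an additional shared vertex for free'' to gain a $\D_i(H)/n$ factor is not just unnecessary, it is the wrong target: you do not need any upper bound on $4$-cycles at all, and a count of that form would not feed into any hypothesis of Theorem~\ref{thm:main}. (The observation in the paper's Case~(1) of Theorem~\ref{th:application}, that a non-clean linear $4$-cycle contains a linear $3$-cycle, is a one-line remark used in that other proof and is also not a counting step.)

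Second, you misplace the role of $\D_{k-1}(H)\le t/(\log t)^{k+1}$. It is not used together with any $4$-cycle bound; it is there because Theorem~\ref{thm:main} explicitly requires $\D_{k-1}(H_k)\le T/(\log T)^{k+1}$ for the sampled hypergraph. The paper obtains this by a Chernoff bound on $\deg_{H[U]}(S)$ for every $(k-1)$-set $S$, together with a union bound over the at most $\binom{n}{k-1}$ such sets; the lower bound $t\ge(\log n)^{12k/\eps}$ is chosen precisely so that this union bound succeeds. Your proposal never establishes a bound on the restricted $(k-1)$-degrees, which is a genuine gap: without it Theorem~\ref{thm:main} cannot be applied. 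Your $2$-cycle count also runs $j$ up to $k-1$, but $(2,k-1)$-cycles are permitted by $\bouquet$ property~ii) and need not (and cannot cheaply) be removed; only $2\le\ell\le k-2$ should appear. Once you restrict attention to $(2,\ell)$-cycles for $2\le\ell\le k-2$ and linear $3$-cycles, your enumeration-by-intersection-profile estimates are in the right spirit and match the paper's computations.
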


Bennett and Bohman \cite{MR3545824} showed that with high probability, the random greedy algorithm on $H$ produces a large independent set as long as $H$ satisfies certain degree and pair degree conditions.
Cooper and Mubayi \cite{MR3507547} showed that such large independent set exists under degree conditions alone.

\begin{theorem}[Cooper and Mubayi \cite{MR3507547}]
\label{thm:CM}
Fix $k\ge 3$ and let $H$ be a $k$-graph with $\D_1(H)\le t^{k-1}$ and $\D_{i}(H)\le t^{k - i} /f$ for $2\le i<k$. Then $\chi(H) = O\left((\frac{\D}{\log f})^{1/(k-1)}\right)$, in particular, $\a(H) = \Omega\left( \frac{n}{t} (\log f)^{1/(k-1)} \right)$.
\end{theorem}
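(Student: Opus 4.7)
The plan is to establish the chromatic-number bound $\chi(H)=O((\Delta/\log f)^{1/(k-1)})$, from which the stated lower bound on $\alpha(H)$ follows by extracting the largest color class. The natural vehicle is the \emph{random greedy independent set algorithm}, analyzed via the differential equation method in the style of Bennett and Bohman~\cite{MR3545824}. Assign each vertex $v$ an independent uniform arrival time $U_v\in[0,1]$; at continuous time $\tau$, let $I(\tau)$ consist of those vertices with $U_v\le\tau$ that do not complete an edge with previously arrived members of $I$. The target is to run this process up to $\tau^*=\Theta(t^{-1}(\log f)^{1/(k-1)})$ and show $|I(\tau^*)|=\Theta(n\tau^*)=\Theta((n/t)(\log f)^{1/(k-1)})$; iterating on the leftover vertex set (whose maximum degree does not increase) then yields the chromatic-number bound.

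The core analytic step tracks, for each still-available vertex $v$, the conditional survival probability $Q_v(\tau)$, together with auxiliary statistics counting the number of edges $e\ni v$ all of whose remaining endpoints are still active. Heuristically $Q_v(\tau)$ decays like
\[
q(\tau) = \exp\left(-\frac{(t\tau)^{k-1}}{(k-1)!}\right),
\]
so that at $\tau=\tau^*$ one still has $q(\tau^*)\approx 1/f$, i.e., a non-trivial pool of candidates remains. The maximum-degree hypothesis $\Delta_1(H)\le t^{k-1}$ is exactly what is needed for the deterministic ODE approximation, while the higher-degree hypotheses $\Delta_i(H)\le t^{k-i}/f$ control the one-step changes in the relevant martingales and ensure that Freedman's (or Azuma's) inequality gives concentration of each tracking variable up to a $1\pm o(1)$ multiplicative error.

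The main obstacle is \emph{simultaneous} concentration of polynomially many tracking variables over the whole interval $[0,\tau^*]$. I would handle this by a self-bootstrapping argument: discretize time at a scale much finer than $\tau^*$, assume inductively that the good event holds up to some time $\tau$, and use this to bound the martingale increments on $[\tau,\tau+d\tau]$ in the next step. The resulting union bound over the discrete times and the vertices/edges being tracked must be $o(1)$, and this is precisely what the hypotheses $\Delta_i(H)\le t^{k-i}/f$ enable; any weakening of a pair- or triple-degree bound would cost a corresponding factor in the achievable $f$. Once uniform concentration is established, a single run of the greedy algorithm produces an independent set of the required size with high probability, and iterating on $V(H)\setminus I$ while tracking the (non-increasing) degree profile completes the chromatic-number bound.
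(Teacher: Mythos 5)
This theorem is quoted from Cooper and Mubayi~\cite{MR3507547} and is not proved in the present paper, so there is no in-paper argument to compare against; the remarks below concern the soundness of your plan on its own terms. The random greedy independent set process, analyzed by the differential equation method in the Bennett--Bohman style, is a reasonable route to the \emph{independence-number} conclusion $\alpha(H)=\Omega\bigl((n/t)(\log f)^{1/(k-1)}\bigr)$, and your reading of the roles of the hypotheses $\Delta_1(H)\le t^{k-1}$ and $\Delta_i(H)\le t^{k-i}/f$ in controlling the drift and the martingale increments is sensible. The gap is in the final step: deriving the chromatic-number bound by ``iterating on the leftover vertex set'' does not give what is claimed. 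If every induced subhypergraph on $n'$ vertices yields an independent set of size $\Omega(n'/m)$ with $m=t/(\log f)^{1/(k-1)}$, then each extraction removes only a $\Theta(1/m)$-fraction of the remaining vertices, so roughly $m\log n$ rounds are needed to exhaust the vertex set, giving $\chi(H)=O(m\log n)$ --- off by a $\log n$ factor. The logical direction in the theorem is the reverse of yours: the $\chi$-bound is the primary result and the $\alpha$-bound is the corollary obtained by taking the largest color class. Cooper and Mubayi's actual argument is a semi-random \emph{coloring} procedure (a nibble combined with the Lov\'asz Local Lemma, in the tradition of Johansson's bound for triangle-free graphs and Frieze--Mubayi for linear hypergraphs), which tracks uncolored-degree statistics across all $O(m)$ colors simultaneously and in parallel, and that is precisely what avoids the $\log n$ loss. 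A single-run independent-set process followed by naive iteration cannot supply the claimed chromatic-number bound; you would need to replace that step by a simultaneous-coloring argument, or settle for the weaker $\chi(H)=O\bigl((\Delta/\log f)^{1/(k-1)}\log n\bigr)$.
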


Unlike Theorem~\ref{thm:CM}, our Theorem~\ref{cor2} has no implication on the chromatic number $\chi(H)$ and assumes that $H$ does not contain certain family of $4$-cycles. On the other hand, applying Theorem~\ref{thm:CM} under the assumption \eqref{eq:Cor2}, one would obtain that $\a(H)=\Omega\left(\frac nt (\log \log t)^{1/(k-1)} \right)$, which is weaker than the conclusion of Theorem~\ref{cor2}.

\medskip
At last, we present our main technical result.
Let $H=\{H_2,\ldots, H_k\}$ be a hypergraph, where each $H_i$ is an $i$-uniform hypergraph on $V(H)$. We say that $H$ has the \textbf{BOUQUET} property if $H$ satisfies the following structural conditions. Note that a \emph{linear $3$-cycle} (in a hypergraph) consists of three edges $e_1, e_2, e_3$ such that $e_1\cap e_2=\{v_1\}$, $e_2\cap e_3 =\{v_2\}$, and $e_3\cap e_1= \{v_3\}$ for distinct vertices $v_1, v_2, v_3$.
Recall that a \emph{$4$-cycle with clean intersections} is a cycle on four edges $e_1,e_2,e_3,e_4$ such that $e_1\cap e_3=e_2\cap e_4=\emptyset$.

\begin{enumerate}[i)]
    \item If $e \in H_i$ and $f\in H_j$ for $i\neq j$, then $|e\cap f|\leq 1$;
    \item For every $2\leq i \leq k$, the intersection of two edges of $H_i$ has sizes $0,1$ or $i-1$;
    \item $H$ contains no linear $3$-cycle unless at least two edges of the cycle are in $H_2$;
    \item $H$ contains no $4$-cycles with clean intersections;
    \item There are no three edges $e_1$, $e_2$, $e_3\in H_3$ such that $|e_1 \cap e_2|=|e_2\cap e_3|=2$ and $|e_1\cap e_3|=1$.
\end{enumerate}

If a hypergraph contains no cycles of length $2,3,4$, then it trivially has the $\bouquet$ property. Therefore the following theorem is a generalization of Theorem~\ref{thm:AKPSS}, provided that $N$ and $T$ satisfy the given assumption.

\begin{theorem}\label{thm:main}
Suppose $N, T, k$ are positive integers such that $N$ is sufficiently large and
\[
(\log N)^3 \le T \le N^{1/4k}.
\]
Let $H=\{H_2,\ldots,H_k\}$ be a hypergraph on $N$ vertices with the $\bouquet$ property. If $H$ satisfies
\begin{align*}
   & \Delta_1(H_i)\leq T^{i-1}(\log T)^{\frac{k-i}{k-1}} \quad \text{for} \quad 2\leq i \leq k \quad \text{and} \\
   & \Delta_{i-1}(H_i) \leq \frac{T}{(\log T)^{i+1}} \quad \text{for} \quad 3\leq i \leq k,
\end{align*}
then $\alpha(H)=\Omega\left(\frac{N}{T}(\log T)^{1/(k-1)}\right)$ where the constant factor may depend on $k$.
\end{theorem}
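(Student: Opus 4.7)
The plan is to adapt the random-greedy / semi-random ``nibble'' methodology underlying Theorem~\ref{thm:AKPSS} and its linear-hypergraph refinement in Duke-Lefmann-R\"odl \cite{MR1370956}, extending it to the mixed-uniformity hypergraph $H=\{H_2,\ldots,H_k\}$ endowed with the $\bouquet$ property. The point of conditions (i)--(v) is exactly that the short cycles they permit ($2$-cycles of specific overlap sizes, certain $3$-cycles involving $H_2$, and ``non-clean'' $4$-cycles) cannot accumulate densely enough to derail an AKPSS-style analysis; in the girth-$\ge 5$ case all of them vanish and we recover Theorem~\ref{thm:AKPSS}. Crucially, the Cooper-Mubayi-type argument (Theorem~\ref{thm:CM}) based only on degrees would give $\alpha=\Omega(N(\log\log T)^{1/(k-1)}/T)$, so one must genuinely exploit the combinatorial structure afforded by $\bouquet$ to improve the iterated logarithm to a single $\log$.

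Concretely, I would run a random greedy independent-set algorithm: process the vertices of $V(H)$ in uniformly random order and, at step $t$, add the current vertex $v$ to $I$ provided no edge of any $H_i$ is closed by doing so. The target size is $pN$ with $p=c(\log T)^{1/(k-1)}/T$ for a small $c=c(k)>0$. Let $N_t$ denote the number of available (unprocessed, unblocked) vertices at step $t$; the aim is to show $N_t/N$ tracks an ODE trajectory that stays positive up to the target. For each $H_i$ one must control (a) the number of ``open'' edges one step away from closing in $I$, and (b) the expected rate at which processing a new vertex blocks a portion of the remaining $N_t$. I would track these via the differential-equation method, with concentration supplied by Azuma-Hoeffding or Talagrand inequalities on the vertex-exposure martingale. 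The $\bouquet$ property enters precisely in the key lemma stating that, with high probability, no vertex is ``blocked'' by too many distinct causes simultaneously; this is where linear $3$-cycles, the $3$-edge configurations of~(v), and clean $4$-cycles would otherwise produce a multiplicative effect that kills the $\log T$ improvement.

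The main obstacle I foresee is the simultaneous multi-uniformity bookkeeping. Bounding the number of ``almost-complete'' configurations of each $H_i$ for $i=2,\ldots,k$ requires juggling the vertex-degree hypothesis $\Delta_1(H_i)\le T^{i-1}(\log T)^{(k-i)/(k-1)}$ against the codegree hypothesis $\Delta_{i-1}(H_i)\le T/(\log T)^{i+1}$, and these in turn must interact consistently with the cross-uniformity overlap constraint (i) and the same-uniformity intersection pattern of~(ii). Each forbidden configuration in (iii)--(v) rules out a specific way in which two or more almost-closed edges could share a common blocking vertex; without these constraints the second-moment computation for ``bad pairs'' of near-closed configurations breaks, because such a pair would generate either a linear $3$-cycle, a clean $4$-cycle, or the $3$-edge overlap pattern forbidden by~(v). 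Once the counts of these bad pairs are shown to be of smaller order than the main term, a routine deletion step removes the few offending vertices and the differential-equation analysis yields $|I|=\Omega(pN)=\Omega(N(\log T)^{1/(k-1)}/T)$ with positive probability, completing the proof.
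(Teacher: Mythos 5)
Your proposal correctly identifies the high-level strategy -- a nibble-flavored randomized construction whose improvement from $\log\log T$ to $\log T$ comes from exploiting the structural restrictions of $\bouquet$ -- and your intuition that the five $\bouquet$ conditions serve to prevent a multiplicative blowup in the ``bad pair'' counts is exactly right. However, what you have written is a plan, not a proof, and the gap between them contains the entire technical content of the theorem. The paper does \emph{not} run the random greedy algorithm with the differential-equation method; it runs the semi-random AKPSS nibble in rounds (choose a random subset $C_{m+1}$ by Bernoulli sampling with probability $p_{m+1}$, remove the blocked vertices $D_{m+1}$ and a carefully defined waste set $W_{m+1}$, then \emph{contract} the surviving edges onto the remaining vertex set to obtain a new mixed-uniformity hypergraph $H^{m+1}$). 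This contraction step is the heart of the argument and has no analogue in your sketch: one must show that $\bouquet$ is preserved under contraction (Lemma~\ref{lem:structural} and Claim~\ref{claim:intersections}), that the almost-regularization step can be interleaved without breaking $\bouquet$ (Lemma~\ref{lem:AR}), and that the contracted degree parameters satisfy a precise recursion driven by the sequence $\alpha_m$ defined in~\eqref{eq:ag1}. That recursion, stated as Lemma~\ref{lem:key}, is what actually produces the $(\log T)^{1/(k-1)}$ factor; your proposal asserts the conclusion but does not set up any recursion, ODE, or invariant that would yield it.

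More concretely, the passages where you say ``these turn must interact consistently with the cross-uniformity overlap constraint'' and ``once the counts of these bad pairs are shown to be of smaller order'' are precisely the places where the paper has to do all the work, and you do not do it. Conditions (i)--(v) are used at very specific points: condition (ii) forces the link $H_x$ to have connected components that are either tiny or have bounded codegree (Proposition~\ref{clm:BouStr}); conditions (iii)--(v) are what make the dependency graphs $D_{i,x}$ and $D_K$ have maximum degree $O(\D_{i-1}(H_i))$ and $O(k)$ respectively (Propositions~\ref{clm:DDS} and~\ref{clm:edge_dep}, via Claims~\ref{cl:funnyintersection},~\ref{clm:ffg},~\ref{cl:orientation}); and this bounded dependency is exactly what licenses the concentration step (via the Hajnal--Szemer\'edi partitioning in Lemma~\ref{lem:AI}) that keeps the waste set small. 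Your proposal invokes Azuma--Hoeffding or Talagrand instead, but you give no reason why those would survive the correlations coming from the intersecting structure that $\bouquet$ still permits -- and in fact naive applications of the vertex-exposure martingale would require codegree control for \emph{all} intermediate $\ell$, which the hypotheses do not provide. In short, you have correctly named the phenomenon the theorem is about, but have not supplied any of the structural lemmas that make it provable, and the algorithm you propose is genuinely different from (and less adapted to the contraction bookkeeping than) the one the paper actually uses.
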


Lee and Lefmann \cite{MR4100800} recently obtained a different non-uniform generalization of Theorem~\ref{thm:AKPSS}, which allows certain $3$- and $4$-cycles but forbid all 2-cycles. Different from \cite{MR4100800}, our proof of Theorem~\ref{thm:main} follows the semi-random method of Ajtai et al. \cite{MR657047} so it is self-contained and does not rely on any aforementioned result, such as Theorem~\ref{thm:AKPSS} or \ref{thm:KMV}. 

We note two very recent results on related problems \cite{GJKKL22, LP22}. 
In particular, \cite[Theorem 1.5]{LP22} implies that $\alpha(H)=\Omega\left(\frac{N}{T}(\log T)^{1/(k-1)}\right)$ whenever $H$ satisfies similar degree conditions as in Theorem~\ref{thm:main} and does not contain certain $3$-cycles.

\bigskip
The structure of this paper is as follows. We prove Theorem~\ref{thm:loglog} and Corollary~\ref{cor:k-2} in the next section. 
We present the proofs of Theorems~\ref{th:application} and \ref{cor2} in the appendix because they are very similar to that of Theorem~\ref{thm:loglog}, except that we apply Theorem~\ref{thm:main} instead of a theorem of \cite{2021arXiv210104258B}.
In Section~\ref{sec:pre}, we prove preliminary results and explain the semi-random method along with the properties of hypergraphs satisfying the $\bouquet$ property. In Section~\ref{sec:key} we introduce a key lemma and prove Theorem~\ref{thm:main} by using this lemma. We present the proof of the key lemma in the last section.

\medskip
\textbf{Notation.} A hypergraph $H$ consists of a vertex set and an edge set, which is a family of subsets of the vertex set. We often identify the hypergraph with its edge set and thus write $f\in H$ when $f$ is an edge of $H$ and denote the size of $H$ by $|H|$. 
For a vertex $x$ in a hypergraph $H$, define \emph{the link} $H_x$ to be the family of sets $e\setminus \{x\}$ for all edges $e\in H$ that contain $x$. 
We often write $\deg_H(x)$, instead of $\deg_H(\{x\})$, which equals to $| H_x |$.
Define $N(x) = \{x\} \cup \bigcup_{f\in H_x} f$ as the \emph{closed neighborhood} of $x$, which consists of all vertices that lie on a common edge with $x$ (including $x$ itself). 
Given a set $S$ and $k\in \mathbb{N}$, $\binom Sk$ is the family of all $k$-subsets of $S$. 
Given $x, y, z\in \mathbb{R}$, we write $x= y\pm z$ when $y-z \le x \le y+z$.

\section{Proofs of Theorem~\ref{thm:loglog} and Corollary~\ref{cor:k-2}}

We first show that Theorem~\ref{thm:loglog} implies Corollary~\ref{cor:k-2}. 

\begin{proof}[Proof of Corollary~\ref{cor:k-2}]
Theorem~\ref{thm:loglog} provides a constant $c_1= c_1(k)$ such that every $k$-graph $H$ on $n$ vertices with $\D_{k-2}(H)\le dn$ satisfies $\a(H)\ge c_1  (\frac nd \log\log \frac nd )^{\frac1{k-1}} $. Let $c_0 = c_1^{k-1}$ and assume that $t$ is sufficiently large. In order to have $\pi_{k-2}( K_t^k) \le 1 - c_0 \log\log t/ t^{k-1}$, it suffices to show that every $k$-graph $G$ on $n$ vertices with $\d_{k-2}(G)\ge (1- c_0 \log\log t/ t^{k-1})\binom n2$ contains a copy of $K_t^k$. Let $H$ be the complement $k$-graph of $G$, namely $V(H)= V(G)$ and $E(H)= \binom {V(H)}k\setminus E(G)$. 
We have 
\[\D_{k-2}(H)\le c_0 \frac{\log\log t}{t^{k-1}} \binom n2\le dn, \quad \text{where} \quad d= \frac{c_0 \log\log t}{2t^{k-1}} n.
\]
Then $n/d = c' t^{k-1}/ \log\log t$, where $c' = 2/ c_0$.
By Theorem~\ref{thm:loglog}, we have 
\[
\a(H)\ge c_1 \left( \frac{c' t^{k-1}}{\log\log t} \log\log\left( \frac{c' t^{k-1}}{\log\log t} \right) \right)^{\frac1{k-1}} \ge  c_1 \left( \frac{c' t^{k-1}}{\log\log t} \frac{\log\log t}2 \right)^{\frac1{k-1}} 
= c_1 \left(\frac{c'}2\right)^{\frac1{k-1}} t
\]
because $t$ is sufficiently large. Since $c_1 (c'/2)^{1/(k-1)} = c_1/ c_0^{1/(k-1)} = 1$, we obtain $\a(H)\ge t$, which gives a desired copy of $K_t^k$ in $G$.
\end{proof}

\medskip
Theorem~\ref{thm:loglog} follows from the following lemma and deletion method.
Given a $k$-graph $G$ and $2\le \ell<k$, a $(2, \ell)$-cycle is a $2$-cycle formed by two edges that share exactly $\ell$ vertices.
We let $C_G(2, \ell)$ denote the number of $(2, \ell)$-cycles of $G$.

\begin{lemma}
\label{lem:m}
Suppose $k\ge 4$, $c>0$, and $m^{\frac{1}{2k-2}}/\log m\geq k$. If $H$ is a $k$-graph on $m$ vertices with $\D(H)\le c m^{1/2}$ and without $(2, \ell)$-cycles for all $\ell =2, \dots, k-2$, then $\a(H) = \Omega \left( m^{\frac{2k-3}{2k-2}} (\log \log m)^{\frac{1}{k-1}} \right)$.
\end{lemma}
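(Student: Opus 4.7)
My plan is to deduce the lemma as a direct application of the recent theorem of Bohman, Liu, and Mubayi \cite{2021arXiv210104258B} on independence numbers of $(n,k,\ell,\lambda)$-systems with $\lambda=o(n)$. Setting $\lambda=c m^{1/2}$, the target bound
\[
\Omega\!\left(m^{(2k-3)/(2k-2)}(\log\log m)^{1/(k-1)}\right) \;=\; \Omega\!\left(\frac{m}{\lambda^{1/(k-1)}}(\log\log m)^{1/(k-1)}\right)
\]
is precisely of the form produced by BLM applied to $H$ viewed as an $(m,k,1,\lambda)$-system, provided the hypotheses of BLM are met.

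The role of the forbidden-cycle hypothesis (no $(2,\ell)$-cycles for $2\le\ell\le k-2$) is to ensure that $H$ is ``almost linear'': any two edges of $H$ sharing at least two vertices must share exactly $k-1$ vertices. Consequently, for every $\ell$-subset $L$ with $2\le\ell\le k-2$, any two edges through $L$ share $\ge\ell\ge 2$ vertices and hence exactly $k-1$; so they differ in a single vertex, and their link is an intersecting $(k-\ell)$-uniform family with uniform intersection size $k-1-\ell$. A short case analysis shows such a family must be either a star around some common $(k-1)$-subset $S\supseteq L$ or a constant-size exotic configuration, so $\deg_H(L)\le \Delta_1(H)\le c m^{1/2}$. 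In particular $\Delta_\ell(H)\le c m^{1/2}$ for every $1\le\ell\le k-1$, and more importantly all 2-cycles of $H$ have overlap size in $\{0,1,k-1\}$.

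With these codegree and structural bounds, $H$ is an $(m,k,\ell,c m^{1/2})$-system for every relevant $\ell$ with $\lambda=c m^{1/2}=o(m)$, and the quantitative assumption $m^{1/(2k-2)}/\log m\ge k$ ensures the parameter regime required by BLM. Applying BLM then yields $\alpha(H)=\Omega\!\left(m^{(2k-3)/(2k-2)}(\log\log m)^{1/(k-1)}\right)$ as claimed.

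The main obstacle is aligning the precise structural hypothesis of Lemma~\ref{lem:m} with the exact form required by BLM. Because $(2,k-1)$-cycles are still permitted here, if BLM's statement needs them absent, a short preprocessing step---removing at most an $o(1)$ fraction of vertices (via a probabilistic deletion exploiting the bound on the total number of $(2,k-1)$-cycles, which is $O(m^2)$ from $\sum_S\binom{\deg(S)}{2}\le \Delta_{k-1}(H)\cdot k|H|/2$)---would bridge the gap without affecting the leading asymptotic order.
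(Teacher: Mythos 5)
Your proposal misses the central idea of the paper's proof: a two-hypergraph decomposition. The paper applies Theorem~\ref{thm:BLM}, which is stated for a union $G_1\cup G_2$ of a $k_1$-graph and a $k_2$-graph with separate degree and cycle hypotheses on each piece; the substantive work of the proof is constructing this pair. Specifically, $G_1$ is taken to be the $(k-1)$-\emph{uniform} hypergraph whose edges are the $(k-1)$-sets $S$ with $\deg_H(S)\ge m^{1/(2k-2)}/(\log m)^\beta$ (for a suitable $\beta<1$), and $G_2\subseteq H$ consists of the edges of $H$ containing no such $S$. An independent set of $G_1\cup G_2$ is automatically independent in $H$. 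The forbidden $(2,\ell)$-cycle hypothesis is used to prove $\Delta_i(G_1)\le 1$ for $i\ge 2$ and $\Gamma(G_1)=0$ (a sharper conclusion than your observation $\Delta_\ell(H)\le\Delta_1(H)$), while the degree threshold defining $G_1$ forces $\Delta_{k-1}(G_2)\le m^{1/(2k-2)}/(\log m)^\beta$, which is what makes $C_{G_2}(2,k-1)$ small enough for hypothesis~(iii) of Theorem~\ref{thm:BLM}. This split is precisely the device that replaces the deletion step you propose.

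That deletion step is not viable. Your estimate $C_H(2,k-1)=O(m^2)$ is tight: take disjoint $(k-1)$-sets $S_1,\dots,S_q$ inside a vertex class $A$ with $q=\Theta(m)$, and edges $S_i\cup\{v\}$ for $\Theta(m^{1/2})$ choices of $v$ in a second class $B$. This keeps $\Delta(H)=O(m^{1/2})$, has no $(2,\ell)$-cycles for $2\le\ell\le k-2$, and produces $q\cdot\Theta(m)=\Theta(m^2)$ many $(2,k-1)$-cycles. Since each vertex lies in at most $\binom{\Delta(H)}{2}=O(m)$ such cycles, destroying all of them by vertex deletion requires $\Omega(m)$ deletions --- a constant fraction of the vertex set, not $o(1)$. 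Furthermore, even granting a clean hypergraph, the single-hypergraph $(n,k,\ell,\lambda)$-system theorem you invoke uses only an $\ell$-degree bound; for $\ell=1$ it cannot improve on Spencer's bound \eqref{Spencer}, and for $\ell\ge 2$ the codegree bound $\Delta_\ell\le cm^{1/2}$ alone implies an average-degree estimate of order $m^{\ell-1}\lambda$, polynomially larger than the true $\Delta(H)\le cm^{1/2}$, so the resulting independence number falls short of the target exponent $(2k-3)/(2k-2)$. The two-graph formulation of Theorem~\ref{thm:BLM} is what allows the vertex-degree bound, the codegree bound, and the cycle restrictions to be exploited simultaneously.
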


\begin{proof}[Proof of Theorem~\ref{thm:loglog}]
For $0 \leq \ell \leq k-2$, the assumption $\D_{k-2}(H)\le dn$ implies that 
\begin{align}\label{eq:degH}
    \D_{\ell}(H) \le \frac{ \binom{n-\ell}{k-2-\ell} \, dn }{\binom{k-\ell}{k-2-\ell} } = \frac{ (n-\ell) \cdots (n-k+3) }{ (k-\ell) \cdots 3 } dn    \le \frac{2}{(k-\ell)!} n^{k-\ell-1} d.
\end{align}
In particular, $|H| \le \frac2{k!} n^{k-1} d \le \frac1{12} n^{k-1} d$ because $k\ge 4$.

If $n/d$ is bounded by a constant, then $\a(H) = \Omega \left( (\frac nd \log\log \frac nd )^{\frac1{k-1}} \right)$ follows from \eqref{Spencer}. Therefore, we assume that $n/d$ is sufficiently large in the rest of the proof, .

Let $U\subseteq V(H)$ be a random subset whose vertices are chosen independently, each one with probability $p = n^{-\frac{2k-5}{2k-3}} d^{-\frac2{2k-3}}$. 
Then $\mE(|U|)= pn = (n/d)^{2/(2k-3)}$.
Since $|U|$ follows the binomial distribution $B(n, p)$, by Chernoff's bound, we have
\begin{align}\label{eq:|U|}
\mP\left( \left| |U| - pn \right| \ge \frac{pn}{9}  \right) < e^{-c pn} < \frac1{8}
\end{align}
for some constant $c>0$ and sufficiently large $n$. We also have
\[
\mE( |H[U]| ) \le p^k  \cdot \frac1{12} \, n^{k-1} d = \frac1{12} \left(\frac nd \right)^{\frac{3}{2k-3}}.
\]
By Markov's inequality, the probability of $|H[U]| \ge 4 \mE( |H[U]| )$ is at most $1/4$.
For $2\le \ell \le k-2$, let $X_{\ell} = C_{H[U]}(2, \ell) $.
we derive from \eqref{eq:degH} that
\[
\mE\left( X_{\ell} \right) \le p^{2k- \ell} \binom n{\ell} \binom{\D_{\ell}(H)}{2} \le p^{2k- \ell} \cdot \frac{n^{\ell}}{\ell!} \cdot \frac12 (n^{k-\ell-1}d)^2
\le \frac14 \, p^{2k - \ell} n^{2k -\ell -2} d^2.
\]
Since $p = n^{-\frac{2k-5}{2k-3}} d^{-\frac2{2k-3}}$, it follows that $\mE(X_2) \le pn/4$ and $\mE(X_{\ell}) \le o(pn)$ for $3\le \ell \le k-2$. Let $X = \sum_{\ell=2}^{k-2} X_{\ell}$ denote the number of $(2, \ell)$-cycles for any $2\le \ell\le k-2$. 
We derive that $\mE(X)\le pn/3$. By Markov's inequality, it follows that $\mP( X \ge 2\mE(X) ) \le 1/2$. 

Therefore, with probability at least $1 - (\frac18 + \frac14 + \frac12) = \frac18$, we have 
\[
|U| > \frac89\, pn = \frac89 \left(\frac nd \right)^{\frac{2}{2k-3}}, \quad |H[U]| < \frac4{12} \left(\frac nd \right)^{\frac{3}{2k-3}}, \quad \text{and} 
\quad X< \frac23 pn = \frac23 \left(\frac nd \right)^{\frac{2}{2k-3}}.
\]
We now pick a set $U$ with these properties. For convenience, write $M= (n/d)^{2/(2k-3)}$. Thus $|U|\ge \frac89 M$ and $X\le \frac23 M$.
Let $U^*$ be the set of all vertices $v\in U$ with $\deg_{H[U]}(v) \ge 3k M^{1/2}$. 
Since $ |H[U]| \le \frac13 (n/d)^{3/(2k-3)} = \frac13 M^{3/2}$ and $\sum_{v\in U} \deg_{H[U]}(v) = k |H[U]|$, we have
\[
|U^*| \le \frac{\frac13 k M^{3/2} }{ 3k M^{1/2} } = \frac{M}9.
\]
We now remove from $U\setminus U^*$ one vertex from each of the $(2, \ell)$-cycles with $2\le \ell\le k-2$ obtaining a set of size at least $|U|-|U^*|-X\geq M/9$. After removing more vertices if necessary,
we obtain a set $U'\subset U$ with $|U'| = M/9$. Let $m= M/9$. The induced subhypergraph $H[U']$ contains no $(2, \ell)$-cycle for any $2\le \ell\le k-2$ and satisfies 
$\D(H[U]) < 3k M^{1/2} = 9k m^{1/2}$.
We now apply Lemma~\ref{lem:m} to $H[U']$. Since  $m= \frac19 (\frac{n}{d})^{2/(2k-3)}$, it gives that 
\[
\a(H)\ge \a(H[U']) = \Omega \left( m^{\frac{2k-3}{2k-2}} (\log \log m)^{\frac{1}{k-1}} \right) = \Omega \left( \left(\frac nd \log\log \frac nd \right)^{\frac1{k-1}} \right). \qedhere
\]
\end{proof}

\medskip
It remains to prove Lemma~\ref{lem:m}. We will apply the following theorem of Bohman, Liu, and Mubayi \cite[Theorem 2.6]{2021arXiv210104258B}, which was derived from the random greedy algorithm of Bennett and Bohman \cite{MR3545824} .
Given a $k$-graph $H$, let $\Gamma(H)$ denote the maximum $g$ such that there are two vertices $x, y\in V(H)$ sharing $g$ \emph{common neighbors}, namely, $(k-1)$-sets $S\subset V(H)$ such that $\{x\}\cup S\in E(H)$ and $\{y\}\cup S\in E(H)$.

\begin{theorem}[Bohman, Liu, and Mubayi \cite{2021arXiv210104258B}]
\label{thm:BLM}
Let $k_1, k_2\ge 2$, $m\in \mathbb{N}$, $D, d, \eps > 0$ such that
\begin{itemize}
\item[(i)] $D > m^{\eps}$ and $\omega:= d(\frac{\log m}{D})^{\frac{k_2 - 1}{k_1 - 1} } \gg 1$.
\end{itemize}
Let $V$ be a set of size $m$, $G_1=(V, E_1)$ be a $k_1$-graph, and $G_2= (V, E_2)$ be a $k_2$-graph with the following properties:
\begin{itemize}
\item[(ii)] $\D(G_1)\le D$, $\D_i(G_1)< D^{\frac{k_1 - i}{k_1 - 1} - \eps}$ for $2\le i\le k_1 - 1$ and $\Gamma(G_1)< D^{1-\eps}$;
\item[(iii)] $k |G_2| \le dm$ and $C_{G_2}(2, i)\ll m(\frac{D}{\log m})^{\frac{2k_2 - i - 1}{k_1 -1}}$ for $2\le i\le k_2 - 1$. 
\end{itemize}
Then $\a(G_1\cup G_2) = \Omega \left( n \left( \frac{\log \omega}{d}  \right)^{\frac1{k_2 - 1}} \right)$, where $G_1\cup G_2 = (V, E_1\cup E_2)$.
\end{theorem}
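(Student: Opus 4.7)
The plan is to analyze the \emph{random greedy independent set} algorithm on the common vertex set $V$. At each discrete time $t$, maintain a partial independent set $I(t)$ and an \emph{alive set} $A(t)\subseteq V\setminus I(t)$ consisting of vertices whose addition to $I(t)$ would not complete any edge of $G_1$ or $G_2$. Pick $v_t\in A(t)$ uniformly at random, set $I(t+1)=I(t)\cup\{v_t\}$, and update $A(t+1)$ by removing $v_t$ together with every vertex $u$ such that $I(t)\cup\{v_t,u\}$ contains an edge of $G_1\cup G_2$. Run the process until the stopping time $\tau := c\, m(\log\omega/d)^{1/(k_2-1)}$ for a suitable absolute constant $c>0$; the output $I(\tau)$ is independent in $G_1\cup G_2$ and has size $\tau$, which is the claimed bound.

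To control the process I would use the \emph{differential equation method} of Wormald, refined by Bennett--Bohman for nibble-type random processes. The primary tracked variable is $|A(t)|$, expected to follow an exponential decay along an explicit trajectory $|A(t)|\approx m\cdot q(t/m)$ determined by the effective death rate coming from both $G_1$ and $G_2$. To close the system one must track, for each $2\le i\le k_1-1$ and each $i$-subset $S\subset V\setminus I(t)$, the number of \emph{$G_1$-extensions} of $S$ in $A(t)$ (that is, the number of $(k_1-i)$-sets $T\subseteq A(t)$ with $S\cup T\in E_1$), and analogously the $G_2$-extensions. The slack in condition (ii), $\Delta_i(G_1)<D^{(k_1-i)/(k_1-1)-\eps}$, is precisely what the method requires so that each of these auxiliary variables remains concentrated along its trajectory: the associated Doob martingales have one-step differences bounded by $D^{1-\eps'}$-type quantities, letting Freedman's inequality deliver concentration with error $o(\text{trajectory value})$ up to time $\tau$. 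The hypothesis $\Gamma(G_1)<D^{1-\eps}$ kills the pathological $G_1$-edges sharing two vertices with a previously deleted edge — without it, a single deletion event could cause too many correlated further deletions and the martingale increments would be too large.

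The scaling $\omega=d(\log m/D)^{(k_2-1)/(k_1-1)}$ arises naturally as the ratio between the termination time dictated by $G_2$ and the point at which $G_2$-driven vertex deletions would overwhelm those coming from $G_1$; condition (i) $\omega\gg 1$ is exactly what ensures the window of viability is wide enough for the process to reach size $\tau$. The 2-cycle hypotheses in (iii), $C_{G_2}(2,i)\ll m(D/\log m)^{(2k_2-i-1)/(k_1-1)}$, enter when bounding second-moment fluctuations of the $G_2$-extension counts: two $G_2$-extensions sharing $i$ vertices form precisely a $(2,i)$-cycle-type configuration, so the bound on $C_{G_2}(2,i)$ translates directly into a bound on the variance contribution from these collisions, ensuring concentration of the $G_2$-deletion rate.

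The main obstacle is the simultaneous handling of $G_1$ and $G_2$, which have very different density profiles: $G_1$ is relatively dense (maximum degree up to $D$) and sets the microscopic scale of the nibble, while $G_2$ is sparser but determines the logarithmic factor in the termination time. One has to design a single ``kill rate'' for $A(t)$ that correctly interleaves both contributions, verify that the $G_1$-extensions and $G_2$-extensions stay concentrated along separate but compatible trajectories, and close the analysis with a union bound over $O(m^{O(k_1+k_2)})$ tracked variables. Matching the scales so that the target $\tau$ is reached before \emph{any} tracked variable leaves its good interval is the technical heart of the argument.
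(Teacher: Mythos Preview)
The paper does not prove this theorem at all: it is quoted verbatim as \cite[Theorem~2.6]{2021arXiv210104258B} of Bohman, Liu, and Mubayi and used as a black box in the proof of Lemma~\ref{lem:m}. The only hint the paper gives about its proof is the remark that it ``was derived from the random greedy algorithm of Bennett and Bohman~\cite{MR3545824}.'' Your sketch is consistent with that remark---random greedy process, differential-equation/martingale tracking of extension counts, with the co-degree and $\Gamma(G_1)$ hypotheses controlling one-step martingale differences and the $C_{G_2}(2,i)$ hypotheses controlling variance of the $G_2$-deletion rate---so as a high-level outline it is on the right track for the \emph{original} source, but there is nothing in the present paper to compare it against.
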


\begin{proof}[Proof of Lemma~\ref{lem:m}]
Let $k\ge 4$, $m\in \mathbb{N}$, and $c>0$ be given.
Suppose $H=(V, E)$ is a $k$-graph on $m$ vertices with $\D(H)\le c m^{1/2}$ and without $(2, \ell)$-cycle for any $\ell =2, \dots, k-2$.
Fix a real number $\beta$ such that $\frac{k}{2k-2} < \b <1$. 
We will apply Theorem~\ref{thm:BLM} with the following two hypergraphs. Let $G_1$ be a $(k-1)$-graph $(V, E_1)$, where $E_1$ consists of all $(k-1)$-sets $S\subset V$ such that $\deg_{H}(S)\ge m^{\frac1{2k-2}} / (\log m)^{\b}$. Let $G_2$ be a $k$-graph $(V, E_2)$, where $E_2$ consists of all edges $e\in H$ such that $\binom{e}{k-1} \cap G_1 = \emptyset$. Thus 
\begin{equation}
\label{eq:DG2}
\D_{k-1}(G_2)\le m^{\frac1{2k-2}} / (\log m)^{\b}.
\end{equation}

Let $D= c(k-1) m^{\frac{k-2}{2k-2} } (\log m)^{\b}$, $d= c m^{1/2}$, and $0< \eps < \min\{\frac{k-2}{2k-2}, \frac1{k-2} \}$. Let us verify (i)--(iii). To see (i), we note that $D> m^{\eps}$ and
\begin{align}\label{eq:omega}
\omega:= d \left( \frac{\log m}{D} \right)^{\frac{k-1}{k-2}} = c m^{\frac12} \left( \frac{\log m}{ c(k-1) m^{\frac{k-2}{2k-2} } (\log m)^{\b} } \right)^{ \frac{k-1}{k-2} } 
= c \left( \frac{ (\log m)^{1-\b} }{ c(k-1) } \right)^{ \frac{k-1}{k-2} } \gg 1
\end{align}
because $\b< 1$. For $x\in V$, double counting gives
\[
\deg_H(x)=\frac{\sum_{S\ni x, |S| = k-1}\deg_H(S)}{k-1}\geq \frac{\deg_{G_1}(x)m^{\frac1{2k-2}} / (\log m)^{\b}}{k-1}
\]
because $\deg_H(S)\ge m^{\frac1{2k-2}} / (\log m)^{\b}$ for every $S\in G_1$. 
This implies that 
\[
\D(G_1) \le \frac{ (k-1) \D(H)}{m^{\frac1{2k-2}} / (\log m)^{\b} } \le \frac{ (k-1) c m^{1/2} }{m^{\frac1{2k-2}} / (\log m)^{\b} } = D.
\] 
For $2\le i\le k-2$, we claim that $\D_i(G_1)\le 1< D^{\frac1{k-2}-\eps}$. Suppose that there exists two distinct edges $e$ and $f$ of $G_1$ sharing $j\geq i $ vertices. Since $\deg_H(e), \deg_H(f)\geq m^{\frac1{2k-2}} / (\log m)^{\b}\geq k-j+2$, there exist vertices $x\neq y$ with $x, y \notin e\cup f$ such that $e\cup\{x\}, f\cup\{y\} \in H$. Thus $e\cup\{x\}$ and $f\cup\{y\}$ share exactly $j$ vertices, contradicting the assumption that $H$ contains no $(2, j)$-cycles. Consequently, $\Gamma(H_1) =0$. Thus (ii) holds.

To see (iii), we note that $|G_2| \le \D(G_2) m/k \le c m^{3/2} /k$. Thus $k |G_2| \le c m^{3/2} = d m$. For $2\le i\le k-2$, since $H$ contains no $(2, i)$-cycle,  we have $C_{G_2}(2, i)= 0$.  The number of $(2, k-1)$-cycles in $G_2$ can be counted by first selecting an edge $e_1$ of $G_2$, then selecting a $(k-1)$-subset $T$ of $e_1$ (in $k$ ways) and an edge $e_2\ne e_1$ of $G_2$ that contains $T$ (in at most $\D_{k-1}(G_2)$ ways), and finally dividing the result by two.
Therefore, in view of \eqref{eq:DG2}, we have 
\[
C_{G_2}(2, k-1) \le |G_2| \D_{k-1}(G_2) \frac{k}2 \le  \frac{c m^{\frac32}}{2} \frac{m^{\frac1{2k-2}} }{ (\log m)^{\b} } \ll m \left( \frac{ c(k-1) m^{\frac{k-2}{2k-2} } (\log m)^{\b} }{\log m} \right)^{\frac{k}{k-2}}
\]
because $-\b < (\b - 1)k/(k-2)$. We now apply Theorem~\ref{thm:BLM} with $\omega = \Omega( (\log m)^{(1-\b)\frac{k-1}{k-2}} )$ from \eqref{eq:omega} and obtain that
\[
\a(G_1\cup G_2)= \Omega \left( m  \left( \frac{\log \omega}{d}  \right)^{\frac1{k - 1}}   \right) = \Omega \left( m  \left( \frac{\log \log m}{ m^{1/2} }  \right)^{\frac1{k - 1}}   \right)
=\Omega  \left( m^{\frac{2k-3}{2k-2}} (\log \log m)^{\frac{1}{k-1}} \right),
\]
as desired.
\end{proof}

\section{Preliminary results and semi-random algorithm}
\label{sec:pre}

\subsection{Preliminary results}

For a vertex $x$ in a hypergraph $H$, recall that \emph{the link} $H_x$ is the set of all $e\setminus \{x\}$ for $e\in H$ such that $x\in e$. Let $N(x) = \{x\} \cup \bigcup_{e\in H_x} e$,
which consists of all vertices that lie on a common edge with $x$ (including $x$ itself). For $S\subseteq V$, define $N(S)= \bigcup_{x\in S} N(x)$, $N^0(S)=S$, and $N^i(S)= N(N^{i-1}(S))$ for $i\ge 1$. We define the \emph{distance} between two vertices $x$ and $y$ as the minimum $i$ such that $y\in N^{i}(x)$ (the distance is $\infty$ if such $i$ does not exist).

\begin{lemma}[Almost Regular Lemma]
\label{lem:AR}
Let $H= \{H_2, \ldots, H_k\}$ be a hypergraph on $V$ with the $\bouquet$ property such that $\D_1(H_i)\le a_{1,i}$ for $2\le i\le k$ and $\D_{i-1} (H_{i})\le a_{i-1,i}$ for $3\leq i \leq k$. 
Then there exists a hypergraph $H'= \{H'_2, \ldots, H'_k\} \supseteq H$ on $V$ such that 
\begin{enumerate}[i)]
\item $H' \in \bouquet$;
\item $\D_1(H'_i)\le a_{1,i}$ for $2\le i\le k$ and $\D_{i-1} (H'_{i})\le a_{i-1,i}$ for $3\leq i \leq k$;
\item there exists a set $B\subseteq V$ with at most $k^2b^3$ vertices such that all $x\in V\setminus B$ satisfy $\deg_{H'_{i}}(x) = a_{1,i}$ for $2\le i\le k$, where $b= 1 + \sum_{i=2}^k (i-1)a_{1,i}$.
\end{enumerate}
\end{lemma}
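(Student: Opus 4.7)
The plan is to construct $H'$ greedily, processing each uniformity $i \in \{2, \ldots, k\}$ in turn. At each stage we let $H'$ denote the current augmentation (initially $H' = H$), scan the vertices $x$ with $\deg_{H'_i}(x) < a_{1,i}$ in some fixed order, and try to add new edges of the form $e = \{x\} \cup S$ with $|S| = i-1$ to $H'_i$ until either $\deg_{H'_i}(x) = a_{1,i}$ or no admissible edge through $x$ remains, in which case we declare $x$ \emph{bad} and place it in $B_i$. We set $B := \bigcup_{i=2}^{k} B_i$.

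An $(i-1)$-set $S$ is \emph{admissible} at $x$ provided that adding $e = \{x\} \cup S$ preserves the degree upper bounds and all $\bouquet$ structural conditions. Concretely: every $v \in S$ must currently have $H'_i$-degree strictly less than $a_{1,i}$; for $i \geq 3$, $S$ itself must have $H'_i$-degree strictly less than $a_{i-1,i}$; and $e$ must create no $\bouquet$-violating configuration. The key structural observation is that each of the $\bouquet$ conditions (i)--(v), when tested at a newly added edge through $x$, forbids $S$ only from meeting certain subsets living inside short-radius neighbourhoods of $x$. Conditions (i) and (ii) only restrict $S$ inside $N(x)$; the linear $3$-cycle condition (iii) and the three-edge configuration (v) restrict $S$ inside $N^2(x)$; and the clean $4$-cycle condition (iv) restricts $S$ inside $N^3(x)$. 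Since $|N(y)| \leq b$ for every $y$ throughout the construction (by the maintained bound $\Delta_1(H'_i) \leq a_{1,i}$), we obtain $|N^3(x)| \leq b^3$, so the total number of $\bouquet$-forbidden positions for any $v \in S$ is $O(b^3)$, independent of $|V|$.

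Consequently the greedy routine always succeeds at $x$ \emph{unless} enough vertices within $N^3(x)$ are already saturated (at $H'_i$-degree $a_{1,i}$) or enough $(i-1)$-subsets inside $N^3(x)$ have reached their degree cap $a_{i-1,i}$. A double-counting then bounds $|B_i|$: each bad $x$ is witnessed by a collection of obstructing saturated vertices (or saturated $(i-1)$-sets) lying in $N^3(x)$, which has at most $b^3$ elements; and each such witness participates in the obstruction certificate of at most $k$ bad $x$'s, because its own closed neighbourhood has size at most $b$ and each of its at most $k$ incident uniformities contributes only boundedly many such chains. This yields $|B_i| \leq k b^3$, and summing over $i = 2, \ldots, k$ gives $|B| \leq k^2 b^3$ as required.

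The main obstacle is the case analysis needed to confirm that every $\bouquet$-forbidden configuration lies within $N^3(x)$, and this is tightest for condition (iv). If $e_1 = \{x\} \cup S$ completed a $4$-cycle $e_1 e_2 e_3 e_4$ with clean intersections $e_1 \cap e_3 = e_2 \cap e_4 = \emptyset$, then the vertices of $e_2$ lie in $N(x)$ (they share the cyclic vertex with $e_1$), those of $e_3$ lie in $N^2(x)$, and those of $e_4$ lie in $N^3(x)$, so the constraint that this $4$-cycle be avoided localizes inside $N^3(x)$. A similar but simpler diameter bound handles (iii) and (v), and the intersection clauses (i)--(ii) are immediate. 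Once this localization is verified for each of (i)--(v), the greedy counting above concludes the proof, and throughout the construction the maintained hypothesis $\Delta_1(H'_i) \leq a_{1,i}$, $\Delta_{i-1}(H'_i) \leq a_{i-1,i}$ is preserved by design, so $H' \supseteq H$ satisfies (i) and (ii) of the lemma.
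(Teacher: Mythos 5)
Your localization observation---that the $\bouquet$ obstructions created by adding an edge through $x$ are confined to $N^3(x)$, which has size at most $b^3$---is correct and is also the key geometric fact in the paper's proof. However, the rest of your argument has a real gap, and the overall strategy differs from the paper's in a way that matters.

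The problem is the ``double-counting'' step. You claim that the greedy routine fails at $x$ only because vertices \emph{within $N^3(x)$} are saturated, but this is not true: when you attempt to add an edge $e=\{x\}\cup S$, the $i-1$ vertices of $S$ typically lie \emph{outside} $N^3(x)$ (indeed they must, to avoid short cycles), and the requirement that every $v\in S$ satisfies $\deg_{H'_i}(v)<a_{1,i}$ is a global constraint, not one visible from $N^3(x)$. The obstruction to extending $x$ is therefore not localized, and the asserted inequality ``$|B_i|\le kb^3$'' does not follow: your ``witnesses'' are saturated vertices, whose number is unbounded in terms of $b$, and the inequalities you have (each bad $x$ has at most $b^3$ witnesses; each witness serves at most $k$ bad $x$'s) run in the wrong direction and do not combine to bound $|B_i|$.

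The paper's proof circumvents all of this by working with a \emph{maximal} supergraph $H'$ satisfying the degree bounds and $\bouquet$, rather than a greedy edge-by-edge construction. It then argues by contradiction: if $|B_i|>(i-1)b^3$, one can greedily select $x_1,\dots,x_i\in B_i$ pairwise at distance greater than $3$ (possible precisely because $|N^3(\cdot)|\le b^3$), and then add the single edge $f=\{x_1,\dots,x_i\}$. Because all $i$ vertices lie in $B_i$, adding $f$ respects the degree caps; because they are pairwise far apart, $f$ creates no cycle of length at most $4$ and no forbidden three-edge configuration, so $\bouquet$ is preserved, contradicting maximality. This sidesteps the issue you ran into: rather than trying to grow the degree of one vertex $x$ at a time (which requires finding many low-degree partners for $x$), it promotes $i$ low-degree vertices simultaneously, and the ``far apart'' condition takes care of all structural constraints at once. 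If you want to repair your proposal, replacing the greedy-plus-double-counting scheme with this maximality-plus-far-apart-set argument is the way to go.
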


\begin{proof}
Let $H'= \{H'_2, \ldots, H'_k\}$ be the largest supergraph of $H$ on $V$ satisfying $i)$ and $ii)$. For $2\le i\le k$, let $B_i = \{x\in V: \deg_{H'_{i}}(x)< a_{1,i} \}$ and $B= \bigcup_{i=2}^k B_i$. Note that for $x\not\in B$,  we have $\deg_{H'_i}(x) = a_{1,i}$ for every $2\le i\le k$.
We claim that $|B_i|\le (i-1) b^3$ for $2\le i\le k$ and consequently $|B|\le \sum_{i=2}^k |B_i|\le k^2b^3$. Suppose $|B_i| > (i-1) b^3$ for some $i$. Let $x_1$ be an arbitrary vertex of $B_i$. For $2\le j\le i$, we inductively select $x_j\in B_i\setminus (N^3(x_1)\cup \dots \cup N^3(x_{j-1}))$ (recall that $N^3(y)$ is the set of vertices that are at distance at most $3$ from $y$). Such $x_j$ exists because for any vertex $x\in H'$, $|N^3(x)|\leq\left(1+\sum_{j=2}^k (j-1)a_{1,j}\right)^3= b^3$ and hence 
\[
|B_i| - \left( |N^3(x_1)| + \cdots + |N^3(x_{j-1})| \right) > (i - 1) b^3 - (j-1) b^3 \ge 0.
\]
Since $x_2\not\in N(x_1)$, the set $f:=\{x_1, \dots, x_i\}$ is not an edge of $H'_i$. We add $f$ to $H'$. The addition of $f$ does not create a new cycle of length at most $4$ because every pair of vertices of $f$ is at distance at least $4$ from each other in $H'$. 
Therefore the resulting hypergraph $H''$ does not create any forbidden structure stated in Properties i) - v) of $\bouquet$ and thus $H'' \in \bouquet$. In addition, $H''$ satisfies $ii)$ because $f \subseteq B_i$. Thus we obtain a larger hypergraph satisfying both $i)$ and $ii)$, which is a contradiction.
\end{proof}

We also need a lemma on the concentration of almost independent Bernoulli random variables.

\begin{lemma}[Almost Independent Lemma]
\label{lem:AI}
Let $X_1, \dots, X_n$ be Bernoulli random variables with $\mP(X_i=1)\ge p$ such that each $X_i$ is mutually independent of all but at most $s$ other $X_j$'s.  Let $X= X_1 + \cdots + X_n$ and $\mu = \mathbb{E}(X)$. For any $0< \eps< 3/2$, 
\[
\mathbb{P}\left( |X - \mu | > \eps \mu \right) \le 2(s+1) \exp\left( - \frac{ \eps^2 pn }{3 (s+1)} \right).
\]
\end{lemma}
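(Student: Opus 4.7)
The plan is to reduce to the classical Chernoff bound via a proper coloring of the dependency graph of $X_1,\dots,X_n$.

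First, I would form the dependency graph $D$ on vertex set $[n]$ by connecting $i$ and $j$ whenever $X_i$ and $X_j$ fail to be independent; the hypothesis that each $X_i$ is jointly independent of all but at most $s$ of the others ensures $\Delta(D)\le s$. Next I would invoke the Hajnal--Szemer\'edi equitable coloring theorem to partition $[n]$ into $s+1$ independent sets $I_1,\dots,I_{s+1}$ of $D$, each of size at least $\lfloor n/(s+1)\rfloor$. On each $I_j$ the variables $\{X_i:i\in I_j\}$ are \emph{mutually} independent: this is proved by induction on $|I_j|$, using the fact that each $X_i$ with $i\in I_j$ is jointly independent of the collection $\{X_{i'}:i'\in I_j\setminus\{i\}\}$, which in turn holds because $I_j\setminus\{i\}$ is disjoint from the closed neighborhood of $i$ in $D$.

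With independence inside each color class in hand, I set $Y_j=\sum_{i\in I_j}X_i$ and $\mu_j=\mathbb{E}(Y_j)\ge p|I_j|$. Applying the standard multiplicative Chernoff bound (valid for $0<\varepsilon<3/2$) to each $Y_j$ gives
\[
\mathbb{P}\bigl(|Y_j-\mu_j|>\varepsilon\mu_j\bigr)\le 2\exp\!\left(-\frac{\varepsilon^2\mu_j}{3}\right)\le 2\exp\!\left(-\frac{\varepsilon^2 pn}{3(s+1)}\right),
\]
up to a negligible rounding coming from $|I_j|=\lfloor n/(s+1)\rfloor$ rather than $n/(s+1)$, which is harmless under the (otherwise trivial) assumption $n\ge s+1$. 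Finally, I would observe that if $|Y_j-\mu_j|\le\varepsilon\mu_j$ held for every $j$, then $|X-\mu|\le\sum_j|Y_j-\mu_j|\le\varepsilon\sum_j\mu_j=\varepsilon\mu$. Hence $\{|X-\mu|>\varepsilon\mu\}\subseteq\bigcup_{j=1}^{s+1}\{|Y_j-\mu_j|>\varepsilon\mu_j\}$, and a union bound yields the claimed inequality.

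The main obstacle is bookkeeping rather than conceptual: one must extract \emph{mutual} (not merely pairwise) independence on each color class from the stated hypothesis, which is handled by the inductive argument above; and one must secure nearly balanced color classes, which is precisely what Hajnal--Szemer\'edi provides. The equitable coloring is genuinely needed here, since with wildly unbalanced classes the sum $\sum_j\exp(-\varepsilon^2 p|I_j|/3)$ can be much larger than $(s+1)\exp(-\varepsilon^2 pn/(3(s+1)))$, so a naive greedy $(s+1)$-coloring would not suffice.
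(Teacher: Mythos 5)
Your proposal is correct and follows essentially the same route as the paper: form the dependency graph, apply Hajnal--Szemer\'edi to get an equitable $(s+1)$-coloring, apply Chernoff within each color class, and finish by a union bound. The only differences are that you spell out two points the paper leaves implicit, namely the inductive upgrade from ``each $X_i$ independent of the rest of its class'' to mutual independence within the class, and the containment $\{|X-\mu|>\varepsilon\mu\}\subseteq\bigcup_j\{|Y_j-\mu_j|>\varepsilon\mu_j\}$ justifying the union bound.
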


\begin{proof}
Let $H$ denote the dependency graph on $X_1, \dots, X_n$, in which each $X_i$ is adjacent to at most $s$ other vertices. By the Hajnal-Szemer\'edi theorem \cite{HS70}, the vertices of a graph $H$ of maximum degree $\D(H)\leq s$ can be partitioned into $s+1$ independent sets $V_1, \dots, V_{s+1}$ of size $\lfloor \frac{n}{s+1} \rfloor$ and $\lceil \frac{n}{s+1} \rceil$. Consider such a partition for our dependency graph. Let $Y_j = \sum_{X_i\in V_j} X_i$ and $\mathbb{E}(Y_j) = \mu_j \ge p\lfloor \frac{n}{s+1} \rfloor$ for $1\le j\le s+1$. Since $V_j$ consists of mutually independent $X_i$'s, we can apply Chernoff's bound and derive that $\mathbb{P}\left( |Y_j - \mu_j | > \eps \mu_j \right) \le 2 \exp\left( - \eps^2 \mu_j /{3} \right)$. Consequently
\begin{align*}
\mathbb{P}\left( |X - \mu | > \eps \mu \right) &\le \bigcup_{j=1}^{s+1} \mP\left( |Y_j - \mu_j | > \eps \mu_j \right)
\le \sum_{j=1}^{s+1} \mathbb{P}\left( |Y_j - \mu_j | > \eps \mu_j \right)\\
&\le 2(s+1) \exp\left( - \frac{ \eps^2 pn }{3 (s+1)} \right). \qedhere
\end{align*}
\end{proof}

\bigskip

\subsection{Semi-random algorithm} 

To prove Theorem \ref{thm:main} we use a modification of the semi-random algorithm of Ajtai et al. \cite{MR657047}. Let $H=\{H_2,\ldots,H_k\}$ be a hypergraph on a set of vertices $V:=V(H)$, where each $H_i$ is an $i$-graph. Let $M$ be a positive integer, $H^0=H$ and $V_0=V(H^0)$. For $0\leq m \leq M-1$, we recursively construct a hypergraph $H^{m+1}$ with vertex set $V_{m+1} \subseteq V$ and a set $I_{m+1}$ of $H^m$. Given $H^m$ with vertex set $V_m$, we apply Lemma \ref{lem:AR} to obtain a superhypergraph $\tilde{H}^m\supseteq H^m$ and a small set $B$ such that $\tilde{H}^m$ is regular on $V_m \setminus B$. Since an independent set in $\tilde{H}^m$ is an independent set in $H^m$, we still denote $\tilde{H}^m$ by $H^m$. The almost regularity of $H^m$ will be useful in the later steps of the proof (see Section~\ref{sec:51}). 

Choose a set $C_{m+1} \subseteq V_m$ uniformly and independently with probability $p_{m+1}$ to be defined later. Define 
\[
D_{m+1} := \{ x\in V_m: e\subseteq C_{m+1} \text{ for some } e\in (H^m)_x \},
\]
i.e., $D_{m+1}$ is the set of all vertices $x$ of $H^m$ such that at least one of the edges in the link $(H^m)_x$ is completely contained in $C_{m+1}$. 

At every iteration we will define a waste set $W_{m+1}:=W_{m+1}(H^m,C_{m+1})$ depending on the hypergraph $H^m$ and the choice of set $C_{m+1}$. The definition of $W_{m+1}$ is somewhat extensive and we leave it to a later stage of the proof. With the waste set defined, let 
\begin{align*}
    I_{m+1} :=C_{m+1}\setminus (D_{m+1}\cup W_{m+1}).
\end{align*}

We define $H^{m+1}$ as the hypergraph with vertex set
\begin{align*}
    V_{m+1} :=V_m\setminus \left(C_{m+1}\cup D_{m+1}\cup W_{m+1}\right).
\end{align*}
The edges of $H^{m+1}$ are obtained as follows. First consider the set of edges 
\[
G^{m+1} :=H^{m}\left[V_{m+1}\cup I_{m+1}\right],
\]
i.e., all the edges of the induced subgraph of $H^{m}$ on the set $V_{m+1}\cup I_{m+1}$. Let \[
G^{m+1}\mid_{V_{m+1}}:=\left\{e\cap V_{m+1}: e\in G^{m+1}\right\}
\]
be the \emph{contraction} (also known as \emph{restriction} or \emph{trace}) of $G^{m+1}$ on $V_{m+1}$. We obtain $H^{m+1}$ from $G^{m+1}\mid_{V_{m+1}}$ after removing multiplicities and edges $e \in G^{m+1}\mid_{V_{m+1}}$ such that there exists another $f \in G^{m+1}\mid_{V_{m+1}}$ with $f\subseteq e$. In other words, the hypergraph $H^{m+1}$ is obtained by contracting the induced graph $H^m[V_{m+1}\cup I_{m+1}]$ on the set $V_{m+1}$ and then removing multiplicities and any edge that properly contains another edge. Figure \ref{fig:fig1} shows how to obtain 
$H^{m+1}$ from $H^{m}$.

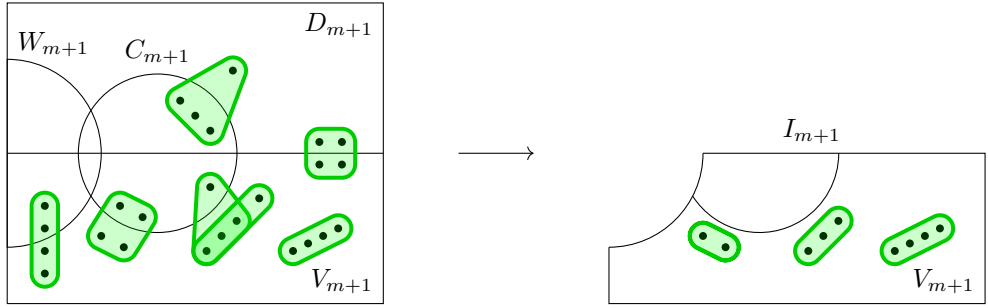
\begin{figure}[h]
	\begin{tikzpicture}[scale=1]
		
		\coordinate (A) at (0,0);
		\coordinate (B) at (0,4);
		\coordinate (C) at (5,4);
		\coordinate (D) at (5,0);
		\coordinate (M) at (0,2);
		\coordinate (N) at (5,2);
		
		\coordinate (x_1) at (4.45,2.15);
		\coordinate (y_1) at (4.15,2.15);
		\coordinate (z_1) at (4.15,1.85);
		\coordinate (w_1) at (4.45,1.85);
		
		\coordinate (x_2) at (4.4,1);
		\coordinate (y_2) at (4.2,0.9);
		\coordinate (z_2) at (4.0,0.8);
		\coordinate (w_2) at (3.8,0.7);
		
		\coordinate (x_3) at (3,3.1);
		\coordinate (y_3) at (2.3,2.7);
		\coordinate (z_3) at (2.5,2.5);
		\coordinate (w_3) at (2.7,2.3);
		
		\coordinate (x_4) at (0.5,0.4);
		\coordinate (y_4) at (0.5,0.7);
		\coordinate (z_4) at (0.5,1);
		\coordinate (w_4) at (0.5,1.3);
		
		\coordinate (x_5) at (1.8,1.15);
		\coordinate (y_5) at (1.5,1.3);
		\coordinate (z_5) at (1.25,0.9);
		\coordinate (w_5) at (1.55,0.75);
		
		\coordinate (x_6) at (2.7,1.55);
		\coordinate (y_6) at (2.65,0.7);
		\coordinate (z_6) at (2.85,0.9);
		\coordinate (w_6) at (3.05,1.1);
		\coordinate (v_6) at (3.35,1.4);
	    
	    \draw (A)--(B)--(C)--(D)--(A);
	    \draw (M)--(N);
	    
	    \draw (2,2) circle (30pt);
	    \draw (0,3.25) -- (0,0.75) arc(-90:90:1.25) --cycle;
	    
	    \fill (x_1) circle (1.5pt);
		\fill (y_1) circle (1.5pt);
		\fill (z_1) circle (1.5pt);
		\fill (w_1) circle (1.5pt);
		
		\fill (x_2) circle (1.5pt);
		\fill (y_2) circle (1.5pt);
		\fill (z_2) circle (1.5pt);
		\fill (w_2) circle (1.5pt);
		
		\fill (x_3) circle (1.5pt);
		\fill (y_3) circle (1.5pt);
		\fill (z_3) circle (1.5pt);
		\fill (w_3) circle (1.5pt);
		
		\fill (x_4) circle (1.5pt);
		\fill (y_4) circle (1.5pt);
		\fill (z_4) circle (1.5pt);
		\fill (w_4) circle (1.5pt);
		
		\fill (x_5) circle (1.5pt);
		\fill (y_5) circle (1.5pt);
		\fill (z_5) circle (1.5pt);
		\fill (w_5) circle (1.5pt);
		
		\fill (x_6) circle (1.5pt);
		\fill (y_6) circle (1.5pt);
		\fill (z_6) circle (1.5pt);
		\fill (w_6) circle (1.5pt);
		\fill (v_6) circle (1.5pt);
				
		\redge{(w_1)}{(z_1)}{(y_1)}{(x_1)}{5pt}{1.5pt}{green!80!black}{green,opacity=0.2};
		\redge{(w_2)}{(z_2)}{(x_2)}{(y_2)}{5pt}{1.5pt}{green!80!black}{green,opacity=0.2};
		\redge{(w_3)}{(z_3)}{(y_3)}{(x_3)}{5pt}{1.5pt}{green!80!black}{green,opacity=0.2};
	    \redge{(w_4)}{(z_4)}{(x_4)}{(y_4)}{5pt}{1.5pt}{green!80!black}{green,opacity=0.2};
	    \redge{(w_5)}{(z_5)}{(y_5)}{(x_5)}{5pt}{1.5pt}{green!80!black}{green,opacity=0.2};
	    \redge{(w_6)}{(z_6)}{(y_6)}{(x_6)}{5pt}{1.5pt}{green!80!black}{green,opacity=0.2};
	    \redge{(w_6)}{(v_6)}{(z_6)}{(y_6)}{5pt}{1.5pt}{green!80!black}{green,opacity=0.2};
	    
	    \coordinate (P) at (8,2);
	    \coordinate (Q) at (10,2);
	    
		\tkzInterCC[R](P,1.25cm)(Q,30pt) \tkzGetPoints{O1}{C1};
		
		\coordinate (A1) at (8,0);
		\coordinate (B1) at (8,0.75);
		\coordinate (D1) at (10cm+30pt,2);
		\coordinate (E1) at (13,2);
		\coordinate (F1) at (13,0);
		\coordinate (G1) at (9.25,2);
		
		\tkzDrawArc[R with nodes, color=black](P,1.25cm)(B1,G1);
		\tkzDrawArc[R with nodes, color=black](Q,30pt)(C1,D1);
		\draw (G1)--(E1)--(F1)--(A1)--(B1);
	    
	    \coordinate (x'_2) at (12.4,1);
		\coordinate (y'_2) at (12.2,0.9);
		\coordinate (z'_2) at (12.0,0.8);
		\coordinate (w'_2) at (11.8,0.7);
		
		\coordinate (z'_5) at (9.25,0.9);
		\coordinate (w'_5) at (9.55,0.75);
		
		\coordinate (y'_6) at (10.65,0.7);
		\coordinate (z'_6) at (10.85,0.9);
		\coordinate (w'_6) at (11.05,1.1);
		
		\fill (x'_2) circle (1.5pt);
		\fill (y'_2) circle (1.5pt);
		\fill (z'_2) circle (1.5pt);
		\fill (w'_2) circle (1.5pt);
		
		\fill (z'_5) circle (1.5pt);
		\fill (w'_5) circle (1.5pt);
		
		\fill (y'_6) circle (1.5pt);
		\fill (z'_6) circle (1.5pt);
		\fill (w'_6) circle (1.5pt);
	   
	   \redge{(w'_2)}{(z'_2)}{(x'_2)}{(y'_2)}{5pt}{1.5pt}{green!80!black}{green,opacity=0.2};
	   \redge{(w'_5)}{(z'_5)}{(w'_5)}{(z'_5)}{5pt}{1.5pt}{green!80!black}{green,opacity=0.2};
	   \redge{(w'_6)}{(z'_6)}{(y'_6)}{(w'_6)}{5pt}{1.5pt}{green!80!black}{green,opacity=0.2};
	   
	   \node (q1) at (5,4) [below left, font=\small] {$D_{m+1}$};
	   \node (q2) at (5,0) [above left, font=\small] {$V_{m+1}$};
	   \node (q3) at (0,3.75) [below right, font=\small] {$W_{m+1}$};
	   \node (q4) at (2,2cm +30pt) [above, font=\small] {$C_{m+1}$};
	   \node (q5) at (13,0) [above left, font=\small] {$V_{m+1}$};
	   \node (q6) at (10.7,2) [above, font=\small] {$I_{m+1}$};
	   
	   \draw[->] (6,2)--(7,2);
	\end{tikzpicture}
	\caption{Hypergraphs $H^m$ and $H^{m+1}$}
	\label{fig:fig1}
\end{figure}

Finally, the output of our algorithm is given by the union of the disjoint sets
\[
I:=\bigcup_{m=1}^M I_m.
\]
Here is a formal description of the algorithm explained above.

\begin{algorithm}[H]\caption{{\bfseries AKPSS Algorithm}}\label{alg:AKPSS} 
\begin{algorithmic}[1]
\STATE Set~$H^0:=H$ and~$V_0:=V(H)$.
\FOR{$m = 0$ \textbf{to} $M-1$}
		\STATE Update $H^m$ by applying Almost regular lemma (Lemma \ref{lem:AR})
    \STATE Generate~$C_{m+1}$: independently include each vertex of $V_{m}$ with probability~$p_{m+1}$. 
    \STATE Set $D_{m+1}:=\{x\in V_m:\: e\subseteq C_{m+1} \text{ for some } e\in (H^m)_x\} $. 
    \STATE Set~$I_{m+1}:=C_{m+1}\setminus (D_{m+1}\cup W_{m+1})$.
      \STATE Set $V_{m+1}:=V_m\setminus (C_{m+1}\cup D_{m+1}\cup W_{m+1})$ and $G^{m+1} :=H^{m}\left[V_{m+1}\cup I_{m+1}\right]$.
      \STATE Construct $H^{m+1}$ by removing multiplicities and edges that properly contain other edges from $G^{m+1}\mid_{V_{m+1}}$.
\ENDFOR
\STATE Return~$I:=\bigcup_{m=1}^M I_m$.
\end{algorithmic}
\end{algorithm}

Our first observation of the Algorithm is that it actually works.

\begin{proposition}\label{cl:independence}
The set $I$ obtained by AKPSS algorithm is an independent set of $H$.
\end{proposition}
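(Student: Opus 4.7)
The plan is to prove the claim by contradiction through an induction that tracks how a putative edge of $H$ contained in $I$ would have to survive each round of the algorithm. Suppose for contradiction that $e$ is an edge of $H$ with $e \subseteq I$. Since each $I_m \subseteq C_m \setminus (D_m \cup W_m)$ and the sets $D_m, W_m, I_m$ are pairwise disjoint, the union $I = \bigcup_{m=1}^M I_m$ is disjoint from every $D_m$ and every $W_m$; in particular $e$ itself avoids $D_m \cup W_m$ for all $m$.

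The core inductive claim is that for every $m \ge 0$ there exists an edge $f_m \in H^m$ with $f_m \subseteq e \cap V_m$. The base case is $f_0 := e \in H^0 = H$. For the inductive step, assume $f_m \in H^m$ with $f_m \subseteq e \cap V_m$. Because $V_m = V_{m+1} \cup I_{m+1} \cup D_{m+1} \cup W_{m+1}$ is a disjoint partition and $f_m \subseteq e$ avoids $D_{m+1} \cup W_{m+1}$, we have $f_m \subseteq V_{m+1} \cup I_{m+1}$. If $f_m \cap V_{m+1}$ were empty then $f_m \subseteq I_{m+1} \subseteq C_{m+1}$, and for every $x \in f_m$ the set $f_m \setminus \{x\}$ is an edge of the link $(H^m)_x$ contained in $C_{m+1}$, forcing $x \in D_{m+1}$ by definition; hence $f_m \subseteq D_{m+1}$, contradicting $e \cap D_{m+1} = \emptyset$. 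Therefore $f_m \cap V_{m+1} \ne \emptyset$, and since $f_m \in H^m[V_{m+1} \cup I_{m+1}] = G^{m+1}$, the nonempty set $f_m \cap V_{m+1}$ appears in $G^{m+1}\mid_{V_{m+1}}$. Because $H^{m+1}$ is obtained from $G^{m+1}\mid_{V_{m+1}}$ only by removing multiplicities and edges that properly contain other edges, some $f_{m+1} \in H^{m+1}$ satisfies $f_{m+1} \subseteq f_m \cap V_{m+1} \subseteq e \cap V_{m+1}$, completing the induction.

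To finish, every vertex of $e$ is placed in some $I_{m_v}$ with $1 \le m_v \le M$, and once placed it leaves the working vertex set, so $e \cap V_M = \emptyset$. Thus the induction cannot proceed past the empty intersection: at the largest $m$ for which $f_m$ still exists, the set $f_m \cap V_{m+1}$ must be empty, producing the $D_{m+1}$-contradiction described above. This shows that no edge of $H$ can lie in $I$, and hence $I$ is independent in $H$. The one delicate point is the contraction step: after restricting $G^{m+1}$ to $V_{m+1}$ and deleting supersets, the "descendant" edge $f_{m+1}$ may be strictly smaller than $f_m \cap V_{m+1}$, so the induction hypothesis must be phrased as "\emph{some} edge of $H^m$ is contained in $e \cap V_m$" rather than "$e \cap V_m$ itself is an edge of $H^m$"; with this formulation the trace operation propagates the claim without difficulty.
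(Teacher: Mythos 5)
Your argument is correct, and it is essentially the contrapositive of the paper's proof, with the induction running in the opposite direction. The paper proves by \emph{downward} induction (from $m=M$ to $m=1$) the stronger invariant that $\bigcup_{j=m}^M I_j$ is independent in $H^{m-1}$; you instead run \emph{forward}, fixing a hypothetical violating edge $e\subseteq I$ and chasing a descending chain $f_m\in H^m$ with $f_m\subseteq e\cap V_m$ until the chain must die. Both arguments turn on the same two observations: (a) an edge of $H^m$ contained in $I_{m+1}\subseteq C_{m+1}$ would force all of its vertices into $D_{m+1}$, which is disjoint from $I_{m+1}$; and (b) an edge of $H^m$ inside $V_{m+1}\cup I_{m+1}$ that still touches $V_{m+1}$ leaves behind an edge of $H^{m+1}$ after contraction and pruning. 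What your direction buys is not having to state or maintain the stronger invariant; what the paper's direction buys is a cleaner statement of what is preserved round to round.

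One spot where your justification is a bit thin: you claim $I$ is disjoint from every $D_m$ and every $W_m$, citing only $I_m\cap(D_m\cup W_m)=\emptyset$ and ``pairwise disjointness.'' That handles $j=m$ but not $j\neq m$. The claim is nevertheless true, and the missing cases follow from the structure of the algorithm: for $j<m$ the set $I_j$ has already been removed from the working vertex set, so $I_j\cap V_{m-1}=\emptyset$, while $D_m\cup W_m\subseteq V_{m-1}$; for $j>m$ one has $I_j\subseteq V_{j-1}\subseteq V_m$, and $V_m\cap(D_m\cup W_m)=\emptyset$. Also note $D_m$ and $W_m$ need not be disjoint from each other, though that does not affect your argument. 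With this small repair, the proof is complete; and for the terminal case, note $f_M\subseteq e\cap V_M=\emptyset$ cannot be an edge, so the $D$-contradiction must in fact occur at some $m<M$.
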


\begin{proof}
The proof is by a reversed induction on the steps of the algorithm. We claim that for $m=M,M-1,\ldots,1$ the set $\bigcup_{j=m}^M I_j$ is an independent set of $H^{m-1}$. In particular, $I=\bigcup_{j=1}^M I_j$ is an independent set of $H^0=H$. 

The base case is that $I_M$ is an independent set of $H^{M-1}$. We claim that $I_m$ is an independent set of $H^{m-1}$ for every $m\ge 1$. Indeed, since the updated $H^{m-1}$ obtained after Lemma \ref{lem:AR} is a supergraph of the original $H^{m-1}$, we just need to show that $I_m$ is an independent set of the updated $H^{m-1}$. Suppose that there exists an edge $e\in H^{m-1}$ with $e\subseteq I_m$. Let $x$ be a vertex of $e$. By the definition of $I_m$, we have that $e\setminus\{x\} \in C_m$. Hence $x\in D_m$, which is a contradiction because $D_m\cap I_m=\emptyset$.

Now suppose that $\bigcup_{j=m+1}^MI_j$ is an independent set of $H^m$. We will show that $\bigcup_{j=m}^M I_j$ is an independent set of $H^{m-1}$. We know that $I_{m}$ is an independent set of $H^{m-1}$. Hence, if $e$ is an edge of $H^{m-1}$ in $\bigcup_{j=m}^M I_j$, then $f:=e \cap \left(\bigcup_{j=m+1}^M I_j \right)\neq \emptyset$. Since $\bigcup_{j=m+1}^M I_j \subseteq V_{m}$ and $I_m \cap V_m=\emptyset$, we obtain that $f \in G^{m}\mid_{V_m}$. By construction, $H^m$ is obtained from $G^m\mid_{V_m}$ by removing multiplicities or edges that properly contains other edge. Therefore, $H^m$ contains an edge $f'\subseteq f$ (it could be that $f'=f)$ and consequently $f' \subseteq \bigcup_{j=m+1}^M I_j$, which contradicts the assumption that $\bigcup_{j=m+1}^M I_j$ is independent in $H^m$.
\end{proof}

\subsection{Structural results}

Now we focus on some structural results concerning the $\bouquet$ property. The following lemma shows that the algorithm AKPSS described above preserves the property.

\begin{lemma}\label{lem:structural}
If $H^{m} \in \bouquet$, then $H^{m+1} \in \bouquet$.
\end{lemma}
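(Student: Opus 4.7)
The plan is to verify the five defining conditions of $\bouquet$ for $H^{m+1}$ one at a time, assuming they hold for $H^m$. The central observation is that every edge $e' \in H^{m+1}$ arises as the trace $e \cap V_{m+1}$ of some edge $e \in H^m$ with $e \subseteq V_{m+1} \cup I_{m+1}$, so for any two edges $e', f' \in H^{m+1}$ with preimages $e, f \in H^m$ one has $e' \cap f' = (e \cap f) \cap V_{m+1}$. A second crucial ingredient is the rule that $H^{m+1}$ discards multiplicities and any edge that properly contains another, which forces traces to sit in ``general position.''

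For properties i) and ii), I would split into the cases $|e| = |f|$ and $|e| \neq |f|$. When the sizes differ, property i) of $H^m$ gives $|e \cap f| \le 1$, hence $|e' \cap f'| \le 1$. When the sizes coincide, property ii) of $H^m$ restricts $|e \cap f|$ to $\{0, 1, |e| - 1\}$; a short calculation comparing $|e \cap I_{m+1}|$ and $|f \cap I_{m+1}|$ with $|e'|$ and $|f'|$ then either yields the required intersection size or forces $e' \subseteq f'$, which is excluded by the no-containment rule.

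Properties iii), iv), v) are verified by the same template but require more case work. For iii), if the preimages of a putative linear $3$-cycle pairwise intersect in a single vertex they already form such a cycle in $H^m$, so property iii) of $H^m$ produces two preimages in $H^m_2$ and hence two $e'_i$ in $H^{m+1}_2$; otherwise some preimage pair shares $|e|-1$ vertices, which via property ii) leaves little room and forces either enough traces into $H^{m+1}_2$ or a configuration excluded by v) of $H^m$. Property v) is handled analogously: the preimages must themselves be three $H^m_s$-edges for a common $s \ge 3$ sharing $s-1$ vertices pairwise in two of the intersections; tracking $|e_i \cap V_{m+1}| = 3$ either forces $s = 3$ (so v) of $H^m$ applies directly) or yields a size mismatch contradicting $|e'_i| = 3$.

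Property iv) is the main obstacle. The clean-intersection condition $e'_1 \cap e'_3 = e'_2 \cap e'_4 = \emptyset$ is not preserved under passing to preimages, since $e_1 \cap e_3$ may be nonempty but sit entirely inside $I_{m+1}$. One first considers the case where each consecutive pair of preimages intersects in a single vertex, so that $e_1, e_2, e_3, e_4$ form a $4$-cycle in $H^m$; its genuine clean-intersection version is forbidden by iv) of $H^m$, and any hidden shared vertex in $(e_1 \cap e_3) \cup (e_2 \cap e_4)$ combined with two consecutive edges of the cycle produces a linear $3$-cycle in $H^m$ whose edges are not all in $H^m_2$, contradicting iii). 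The remaining subcases, where some consecutive preimage pair shares $|e|-1$ vertices, reduce via property ii) to a short explicit list of configurations each ruled out by one of ii), iii), iv), or v) of $H^m$.
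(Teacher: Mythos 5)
Your outline follows the same overall strategy as the paper: every edge of $H^{m+1}$ is a trace $e\cap V_{m+1}$ of an edge of $H^m$, intersections of traces are controlled via the $\bouquet$ properties of $H^m$ together with the no-multiplicity/no-containment rule, and the five conditions are checked one by one, with property iv) handled by locating a ``hidden'' vertex $y\in e_1\cap e_3$ in $I_{m+1}$ and deriving a forbidden linear $3$-cycle or a Property~v) configuration. The paper's proof isolates your intersection-control step as a standalone claim, and in checking iv) it splits on whether $y\in e_2$ rather than on the sizes of consecutive preimage intersections; the paper's split gives exactly two cases, whereas your split by intersection sizes produces more subcases whose eliminations you defer to a ``short explicit list,'' so the paper's organization is somewhat cleaner even though the two are logically equivalent. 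Two small corrections worth making if you write this up in full: in property iii) the contradiction requires that \emph{at most one} of $e_1,e_2,e_3$ lies in $H_2^m$ (not merely that they are ``not all'' in $H_2^m$), which one gets by showing $|e_1\setminus e_2|\ge 2$ and $|e_3\setminus e_2|\ge 2$; and in property v) the way to force $s=3$ is not a size mismatch with $|e_i'|=3$ directly, but rather the combination of $|e_1\cap e_3|\ge s-2$ (from $|e_1\cap e_2|=|e_2\cap e_3|=s-1$ inside $e_2$) with the trace-derived fact $|e_1\cap e_3|=1$.
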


\begin{proof}
For simplicity of notation, let $H=H^m=\{H_2,\ldots, H_k\}$ and let $H^*=H^{m+1}=\{H_2^*, \ldots, H_k^*\}$. By the description of the algorithm, the graph $H^*$ has no multiple edges or edges that properly contain other edges. Moreover, every edge $e^* \in H^*$ is obtained from an edge of~$H$ by contracting to $V^*=V(H^*)$. Let $e$ be the corresponding edge from $H$ that was contracted to $e^*$, i.e., $e^*=e\cap V^*$. Note that in some cases $e=e^*$. The following proposition describes the possible relations between intersection on $H^*$ and $H$.

\begin{claim}\label{claim:intersections}
Let $e^*,f^* \in H^*$ and $e,f \in H$ be the corresponding edges that were contracted to $e^*$ and $f^*$, respectively. Then the following holds:
\begin{enumerate}
    \item If $|e^*\cap f^*|>1$, then $e,f \in H_r$ for some $r$ and $|e\cap f|=r-1$.
    \item If $e^*\notin H_2^*$ and $|e^* \cap f^*|=1$, then $|e\cap f|=1$.
    \item If $e^*\cap f^*=\emptyset$, then $|e\cap f|\leq 1$.
\end{enumerate}
\end{claim}

\begin{proof}
The proof of $(1)$ is just a consequence of $H$ satisfying $\bouquet$. Since $1<|e^*\cap f^*|\leq|e\cap f|$ and $H \in \bouquet$, we have by Property i) of $\bouquet$ that $|e|=|f|$. Thus $e,f \in H_r$ for some $r$ and by Property ii) we have that $|e\cap f|=r-1$.

For $(2)$, suppose that $e^*\notin H_2^*$ and $|e^* \cap f^*|=1$ but $|e\cap f|>1$. Then, the argument above shows that $e,f \in H_r$ for some $r$ and~$|e\cap f|=r-1$. Since 
\[
| e\cap f|= |e^*\cap f^*| + |(e \cap f)\setminus V^*| = 1 +  |(e \cap f)\setminus V^*|,
\] 
we obtain that~$|(e\cap f)\setminus V^*|=r-2$ and thus $|e\setminus e^*|=|f\setminus f^*|=r-2$. This implies that both $e^*$ and $f^*$ are in $H_2^*$, which is a contradiction.

Finally, to prove $(3)$, suppose that $e^*\cap f^*=\emptyset$ and $|e\cap f|>1$. Then by Properties i) and ii) of $\bouquet$, we obtain that $e,f\in H_r$ for some $r$ and $|e\cap f|=r-1$. Since
\[
| e\cap f|= |e^*\cap f^*| + |(e \cap f)\setminus V^*| = |(e \cap f)\setminus V^*|,
\]
we obtain that $|(e\cap f)\setminus V^*|=r-1$ and thus $|e^*|=|f^*|=1$, which is a contradiction. 
\end{proof}

To check that $H^*$ satisfies $\bouquet$ we just need to verify all conditions.

\begin{enumerate}[i)]
    \item Suppose that there are edges $e^*\in H_i^*$ and $f^* \in H_j^*$ such that $|e^* \cap f^*|>1$ and $i<j$. By Claim \ref{claim:intersections} we have that $e,f \in H_r$ for some $r$ and~$|e\cap f|=r-1$. Moreover,
    \[
    r-1= |e \cap f|= |e^*\cap f^*|+|(e\cap f)\setminus V^*|\leq |e^*\cap f^*|+|f\setminus f^*|\leq |e^*\cap f^*|+r-j.
    \] 
    This implies that $|e^*\cap f^*|\geq j-1\geq i$, which forces $e^* \cap f^* = e^*$ contradicting the assumption that $H^*$ does not contain edges that are proper subset of other edges.
    
    \item If $e^*, f^* \in H_i^*$ and $|e^*\cap f^*|>1$, then by Claim \ref{claim:intersections} we have that $e,f \in H_r$ for some $r$ and $|e\cap f|=r-1$. Let $e\setminus f=\{x\}$ and $f\setminus e=\{y\}$. Since $e^*$ and $f^*$ are not properly contained in each other, then $x\in e^*$ and $y\in f^*$, which implies that $|e^*\cap f^*|=i-1$.
    
    \item Suppose that $H^*$ has a linear $3$-cycle with at most one edge in $H_2^*$. Let $e_1^*$, $e_2^*$ and $e_3^*$ be the edges of this cycle and let $e_1$, $e_2$ and $e_3$ be the corresponding edges in $H$. Since at most one of the $e_i^*$ is in $H_2^*$, then by Claim \ref{claim:intersections} we have that $|e_1\cap e_2|=|e_2\cap e_3|=|e_3\cap e_1|=1$ and $e_1$, $e_2$, $e_3$ forms a linear $3$-cycle with at most one edge in $H_2$, which contradicts Property iii) of $\bouquet$.
    
    \item Suppose that $H^*$ has a $4$-cycle with clean intersections. Let $e_1^*, e_2^*, e_3^*$ and $e_4^*$ be the edges of the cycle and $e_1, e_2, e_3$ and $e_4$ the corresponding edges in $H$. By definition of clean intersections, we have that $e_1^*\cap e_3^*=\emptyset$ and $e_2^*\cap e_4^*=\emptyset$. Hence, by Claim \ref{claim:intersections}, we obtain that $|e_1\cap e_3|\leq 1$ and $|e_2\cap e_4|\leq 1$.
    
    Since $e_1,e_2,e_3$ and $e_4$ forms a $4$-cycle in $H$ and $H$ satisfies Property iv) of $\bouquet$, we have that either $|e_1\cap e_3|=1$ or $|e_2\cap e_4|=1$. Suppose without loss of generality that $|e_1\cap e_3|=1$. Assume $e_1\cap e_3=\{y\}$ for some $y\notin V(H^*)$. We split the proof into two cases. First suppose that $y\notin e_2$. Thus 
    \begin{align*}
        \{y\}\cup (e_1^*\setminus e_2^*)&\subseteq e_1\setminus e_2,\\
        \{y\}\cup (e_3^*\setminus e_2^*)&\subseteq e_3\setminus e_2.
    \end{align*}
    Because $e_1^*\cap e_4^* \neq \emptyset$ and $e_2^*\cap e_4^*=\emptyset$, we have that $e_1^*\setminus e_2^*\neq \emptyset$. Similarly, $e_3^*\setminus e_2^*\neq \emptyset$. This implies that $|e_1\setminus e_2|\geq 2$ and $|e_3\setminus e_2|\geq 2$. Thus, by Property i)
    of $\bouquet$ and the fact that $e_1,e_2,e_3,e_4$ is a cycle, we obtain that $|e_1\cap e_2|=1$ and $|e_2\cap e_3|=1$. Hence, $e_1,e_2,e_3$ is a linear $3$-cycle with $e_1,e_2\notin H_2$, which is a contradiction to Property iii).
    
    For the second case, suppose that $y\in e_2$. Thus
    \begin{align*}
        (e_1^*\cap e_2^*)\cup\{y\}&\subseteq (e_1\cap e_2),\\
        (e_2^*\cap e_3^*)\cup\{y\}&\subseteq (e_2\cap e_3),
    \end{align*}
    which implies that $|e_1\cap e_2|\geq 2$ and $|e_2\cap e_3|\geq 2$. Hence, by Properties i) and ii) of $\bouquet$, we have that $e_1,e_2,e_3 \in H_r$ for some $r\geq 3$ and $|e_1\cap e_2|=|e_2\cap e_3|=r-1$. Therefore $r-2\leq |e_1\cap e_3|=1$, which implies that $r=3$. Thus, we obtained three edges $e_1,e_2,e_3$ such that $|e_1\cap e_2|=|e_2\cap e_3|=2$ and $|e_1\cap e_3|=1$, which contradicts Property v) of $\bouquet$.
    
    \item Suppose that there exist three edges $e^*_1, e^*_2, e^*_3\in H_3^*$ such that $|e_1^* \cap e^*_2|=|e_2^*\cap e_3^*|= 2$ and $|e_1^*\cap e_3^*|=1$. Then, by Claim \ref{claim:intersections}, we have $e_1, e_2, e_3 \in H_r$ for some $r\geq 3$ and also that $|e_1\cap e_2|=|e_2\cap e_3|=r-1$ and $|e_1\cap e_3|=1$. Therefore $r-2\leq|e_1\cap e_3|=1$ and we obtain that $r=3$. Hence, $e_1,e_2$ and $e_3$ contradicts Property v) of $\bouquet$. \qedhere
\end{enumerate}
\end{proof}

The reason why we call such a family of graphs as bouquets is due to the following fact. 

\begin{figure}[h]
	\begin{tikzpicture}[scale=0.5]
		
		\coordinate (x) at (0,0);
		\coordinate (y) at (2,0);
		\coordinate (z) at (4,0);
		\coordinate (w1) at (6,-3);
		\coordinate (w2) at (6,-2);
		\coordinate (w3) at (6,-1);
		\coordinate (w4) at (6,0);
		\coordinate (w5) at (6,1);
		\coordinate (w6) at (6,2);
		\coordinate (w7) at (6,3);

		\redge{(w1)}{(x)}{(y)}{(z)}{9pt}{1.5pt}{green!80!black}{green,opacity=0.2};
		\redge{(w2)}{(x)}{(y)}{(z)}{9pt}{1.5pt}{green!80!black}{green,opacity=0.2};
		\redge{(w3)}{(x)}{(y)}{(z)}{9pt}{1.5pt}{green!80!black}{green,opacity=0.2};
		\redge{(w4)}{(z)}{(y)}{(x)}{9pt}{1.5pt}{green!80!black}{green,opacity=0.2};
		\redge{(w5)}{(z)}{(y)}{(x)}{9pt}{1.5pt}{green!80!black}{green,opacity=0.2};
		\redge{(w6)}{(z)}{(y)}{(x)}{9pt}{1.5pt}{green!80!black}{green,opacity=0.2};
		\redge{(w7)}{(z)}{(y)}{(x)}{9pt}{1.5pt}{green!80!black}{green,opacity=0.2};
		
		\fill (x) circle (3pt);
		\fill (y) circle (3pt);
		\fill (z) circle (3pt);
		\fill (w1) circle (3pt);
		\fill (w2) circle (3pt);
		\fill (w3) circle (3pt);
		\fill (w4) circle (3pt);
		\fill (w5) circle (3pt);
		\fill (w6) circle (3pt);
		\fill (w7) circle (3pt);
		
	\end{tikzpicture}
	\caption{A bouquet}
	\label{fig:fig2}
\end{figure}
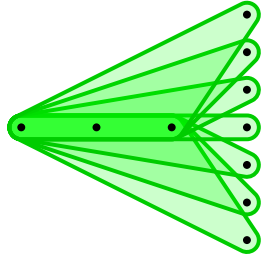

\begin{proposition}\label{cl:bouquet}
Suppose $H\in \bouquet$ contains a subhypergraph $F$ such that $|e\cap f| \ge 2$ for every $e,f \in F$. 
Then there exists $3\le i\le k$ such that $F\subseteq H_i$ and
either $F$ is a subhypergraph of $K_{i+1}^i$ or all members of $F$ share a common $(i-1)$-subset.
\end{proposition}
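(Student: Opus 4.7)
The plan is to first use Properties i) and ii) of $\bouquet$ to narrow down the ``shape'' of $F$ and then perform a short combinatorial case analysis to pin down its global structure.

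First I would observe that by Property i), edges from different $H_j$ intersect in at most one vertex, so the hypothesis that every two edges of $F$ meet in at least two vertices forces $F \subseteq H_i$ for a single $i$; since distinct edges of $H_2$ cannot share both of their vertices, we must have $i \ge 3$. Property ii) then says any two edges of $H_i$ meet in $0$, $1$, or $i-1$ vertices, so every pair of edges in $F$ intersects in exactly $i-1$ vertices.

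Next, fix $e_1, e_2 \in F$, set $A = e_1 \cap e_2$, and write $e_1 = A \cup \{x\}$, $e_2 = A \cup \{y\}$ with $x \ne y$. The key dichotomy is: either every $e \in F$ contains $A$, and then the members of $F$ share the common $(i-1)$-subset $A$ and we are done; or some $e_3 \in F$ omits a vertex $a \in A$. In the latter case, the conditions $|e_3 \cap e_1| = |e_3 \cap e_2| = i-1$ together with $a \notin e_3$ force $e_3 \supseteq (e_1 \setminus \{a\}) \cup (e_2 \setminus \{a\}) = (A \setminus \{a\}) \cup \{x,y\}$, and comparing cardinalities yields $e_3 = (A \setminus \{a\}) \cup \{x,y\}$. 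Hence $e_1, e_2, e_3$ all lie in the $(i+1)$-vertex set $V_0 = A \cup \{x,y\}$ and realize three edges of $K_{i+1}^i$ on $V_0$.

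It then remains to show that every further $e_4 \in F$ also satisfies $e_4 \subseteq V_0$, so that $F \subseteq K_{i+1}^i$ on $V_0$. Since $e_1 \subseteq V_0$, $|e_1| = i$, and $|e_4 \cap e_1| = i-1$, the edge $e_4$ has at most one vertex outside $V_0$. If it had exactly one such vertex, then $|V_0 \setminus e_4| = 2$, and since $V_0 = e_1 \cup e_2 \cup e_3$ this two-element set would be the union of the three singletons $e_j \setminus e_4$ for $j = 1,2,3$, so at least two of these singletons coincide. Using the explicit intersection patterns $e_1 \cap e_2 = A$, $e_1 \cap e_3 = (A \setminus \{a\}) \cup \{x\}$, $e_2 \cap e_3 = (A \setminus \{a\}) \cup \{y\}$, I would check in each case that this forces $e_4 = A \cup \{u\}$ for some $u \notin V_0$, and then derive the contradiction $|e_4 \cap e_3| = |A \setminus \{a\}| = i-2$.

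The main obstacle is this last verification, which is not deep but requires careful bookkeeping: one has to enumerate the three possible coincidences among the missed vertices $e_j \setminus e_4$ and use all three (non-symmetric) intersection patterns of $e_1, e_2, e_3$ to close each case. It is worth noting that this structural proposition uses only Properties i) and ii) of $\bouquet$; the forbidden-configuration Properties iii)--v) play no role here and are reserved for the semi-random arguments in later sections.
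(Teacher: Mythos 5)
Your proof takes the paper's approach: both first use Properties i) and ii) to reduce to $F\subseteq H_i$ with every pairwise intersection of size exactly $i-1$, then fix $e_1,e_2$ with $A=e_1\cap e_2$ and split on whether a third edge contains $A$. For the final verification, the simplest route is to apply your own dichotomy once more to an arbitrary $e_4\in F\setminus\{e_1,e_2\}$: either $e_4\supseteq A$, in which case $e_4=A\cup\{u\}$ with $u\notin\{x,y\}$ and then $|e_4\cap e_3|=|A\setminus\{a\}|=i-2$, a contradiction; or $e_4$ omits some vertex of $A$ and then $e_4\subseteq e_1\cup e_2$ by exactly the argument you used for $e_3$. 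This is precisely the paper's one-line observation that a ``type (i)'' edge $(A\setminus\{a'\})\cup\{x,y\}$ and a ``type (ii)'' edge $A\cup\{u\}$ would meet in only $i-2$ vertices. The singleton-coincidence casework you sketch instead is more convoluted and slightly misdescribed: for example, the branch $e_1\setminus e_4=e_2\setminus e_4=\{p\}$ forces $p\in A$, and then $e_1\setminus e_4=e_1\cap(V_0\setminus e_4)$ is a singleton only if the other element of $V_0\setminus e_4$ is $y$, which makes $e_2\setminus e_4\supseteq\{p,y\}$ of size $2$ --- that branch is internally inconsistent and never reaches the form $e_4=A\cup\{u\}$. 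If you do want the vertex-counting route, the clean version is that $e_1,e_2,e_3$ each omit exactly one vertex of $V_0$, and these omitted vertices $y,x,a$ are distinct; so if $|V_0\setminus e_4|=2$, at least one of them lies outside $V_0\setminus e_4$, forcing the corresponding $e_j\setminus e_4=V_0\setminus e_4$ to have size $2$ and contradicting $|e_j\cap e_4|=i-1$. Modulo tightening that last step, your proof is correct and essentially identical to the paper's.
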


\begin{proof}
We assume $|F|>1$ -- otherwise the assertions holds trivially.
Since $|e\cap f| \ge 2$ for every $e,f \in F$, Properties i) and ii) of $\bouquet$ imply that $F\subseteq H_i$ for some $3\le i\le k$ and
$ |e\cap f|=i-1$ for every $e,f \in F$.

Fix $e, f\in F$ with $S=e \cap f$. For every $f'\in F\setminus\{e, f\}$, since $|f'\cap f|= |f'\cap e|= i-1$, exactly one of the following holds:
\begin{enumerate}[(i)]
\item there exists $x_0\in S$ such that $f'\cap e = e\setminus \{x_0\}$ and $f'\cap f = f\setminus \{x_0\}$ (thus $f\subset e\cup f$).
\item $f'\cap e= f'\cap f= S$ (thus $f\not\subseteq e\cup f$);
\end{enumerate}
Furthermore, either all $f'\in F\setminus\{e, f\}$ satisfy (i) or all $f'\in F\setminus\{e, f\}$ satisfy (ii) because an $f'$ satisfying (i) and an $f''$ satisfying (ii) force $|f'\cap f''|= i-2$, a contradiction.  The former case implies that $F$ is a subhypergraph of $K_{i+1}^i$ while the latter case implies that all members of $F$ contains $S$ as a subset.
\end{proof}

For $x\in V$, we often view its link $H_x$ as a hypergraph with vertex set $V\setminus \{x\}$ and edge set $H_x$. The following proposition describes the structure of $H_x$ when $H\in \bouquet$. A \emph{(connected) component} of a hypergraph $G$ is a maximal subhypergraph $C$ such that for every two edges $e, f\in C$, there are edges $e_0= e, e_1, \dots, e_r= f\in G$ such that $e_i\cap e_{i-1} \ne \emptyset$ for $i=1, \dots, r$. 

\begin{proposition}
\label{clm:BouStr}
Suppose $H\in \bouquet$ and $x\in V(H)$. For every component $C$ of size $|C|\ge 2$ in the link $H_{x}$, there exists $3\le i\le k$ such that all members of $C$ have size $i-1$, and $|V(C)|\le i$ or $|C|\le \D_{i-1}(H_i)$.
In particular, 
$|C|\le \max\{i, \D_{i-1}(H_i)\}$.
\end{proposition}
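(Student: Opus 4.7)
The approach is to pull the assertion back to the underlying hypergraph $H$, identify a subfamily $F \subseteq H_i$ that encodes the component $C$, and then apply Proposition~\ref{cl:bouquet} to $F$. The first task is to show that every link edge in $C$ arises from an original edge of $H_i$ for a single value $i \in \{3,\dots,k\}$. Indeed, whenever two link edges share a vertex, the two corresponding edges of $H$ share $x$ together with at least one additional vertex; Property~i) forces them to belong to the same $H_j$, and Property~ii) then pins their intersection size to $j-1 \ge 2$, so $j \ge 3$. Propagating this observation along any connecting chain of $C$ yields a common $i \ge 3$ throughout $C$, and consequently each link edge has size $i-1$.

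\textbf{Core step.} Let $F = \{\, e \in H_i : x \in e,\ e \setminus \{x\} \in C \,\}$. I claim that $|e \cap f| \ge 2$ for every $e,f \in F$, so that Proposition~\ref{cl:bouquet} applies to $F$. Suppose for contradiction that $e, f \in F$ satisfy $e \cap f = \{x\}$; their link images are then disjoint, so any shortest chain inside $C$ joining them has length at least $2$. Let $L_0, L_1, L_2$ be the first three link edges along such a shortest chain: minimality forces $L_0 \cap L_2 = \emptyset$, while $L_0 \cap L_1$ and $L_1 \cap L_2$ are non-empty. Lifting to the corresponding edges $e_0, e_1, e_2 \in H_i$ and invoking Property~ii) yields $|e_0 \cap e_1| = |e_1 \cap e_2| = i-1$ while $|e_0 \cap e_2| = 1$. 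When $i = 3$ this is precisely the configuration forbidden by Property~v). When $i \ge 4$, inclusion--exclusion gives
\[
|e_0 \cap e_1 \cap e_2| \;\ge\; |e_0 \cap e_1| + |e_1 \cap e_2| - |e_1| \;=\; i - 2 \;\ge\; 2,
\]
which contradicts $|e_0 \cap e_1 \cap e_2| \le |e_0 \cap e_2| = 1$.

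\textbf{Conclusion and main obstacle.} With the claim established, Proposition~\ref{cl:bouquet} leaves two possibilities for $F$. Either $F$ is a subhypergraph of $K_{i+1}^{i}$, in which case $|V(C)| = |V(F) \setminus \{x\}| \le i$ (and consequently $|C| \le \binom{i}{i-1} = i$); or every edge of $F$ contains a common $(i-1)$-subset $S$. In the latter case $x$ must lie in $S$, for otherwise $S \cup \{x\} \subseteq e$ together with $|e| = i$ collapses $F$ to the single edge $S \cup \{x\}$, contradicting $|C| \ge 2$; then $|C| = |F| \le \Delta_{i-1}(H_i)$ by definition of $\Delta_{i-1}$. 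Combining the two cases proves the proposition, and the ``in particular'' estimate $|C| \le \max\{i, \Delta_{i-1}(H_i)\}$ follows. The main obstacle is the core step: Property~ii) restricts pairwise intersection sizes but does not on its own prevent two link edges of $C$ from being disjoint, and the coordinated use of Property~v) (for $i = 3$) with the inclusion--exclusion bound (for $i \ge 4$) is the delicate ingredient that rules this out, thereby enabling Proposition~\ref{cl:bouquet}.
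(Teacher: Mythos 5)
Your proof is correct and follows essentially the same route as the paper: establish a common uniformity $i\ge 3$ via Properties i) and ii), show that any two edges of the component must intersect by taking a shortest connecting chain and deriving a contradiction (the paper forces $i=3$ via the same inclusion--exclusion count and then invokes Property v), whereas you split into the cases $i=3$ and $i\ge 4$; the two formulations are interchangeable), and finally apply Proposition~\ref{cl:bouquet} to the lifted family $F=\{\{x\}\cup f : f\in C\}$ to obtain the dichotomy. Your explicit observation that $x$ must lie in the common $(i-1)$-set $S$ when $|C|\ge 2$ is a slightly cleaner way to justify the bound $|C|\le\Delta_{i-1}(H_i)$ than the paper's phrasing, but it is not a different argument.
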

\begin{proof}
Property i) of $\bouquet$ and the definition of $C$ imply that all edges of $C$ have the same size, say, $i-1$ for some $2\le i\le k$. When $i=2$, we have $|C|=1$, contradicting our assumption. Hence $i\ge 3$.

By property ii) of $\bouquet$, if $e,f\in C$ are intersecting (namely, $e\cap f\neq \emptyset$), then $|e\cap f|= i-2$. We claim that all edges of $C$ intersects with each other. Suppose to the contrary, there are two disjoint edges in $C$. Since $C$ is a connected component, we may assume that there are edges $e, f, g\in C$ such that $e\cap f\neq \emptyset$ and $f\cap g\neq \emptyset$ but $e\cap g=\emptyset$. Thus, $|e\cap f|=|f \cap g|=i-2$. This implies that $|e\cap g|\ge i-3$ and since $e\cap g=\emptyset$, we have that $i=3$. Hence, $e\cup \{x\}, f\cup\{x\}, g\cup\{x\}$ are edges such that $|(e\cup\{x\})\cap (g\cup\{x\})|=1$ and $|(f\cup\{x\}) \cap (e\cup\{x\})|=|(f\cup\{x\}) \cap (g\cup\{x\})|=2$, which contradicts Property v) of $\bouquet$.

Therefore every $e\ne f\in C$ satisfy $e\cap f \neq \emptyset$. Applying Proposition \ref{cl:bouquet} to $F= \{ \{x\}\cup f: f\in C\}$, we know that either $F$ is a subhypergraph of $K^i_{i+1}$ or all members of $F$ contains a common $(i-1)$-subset. This implies that  either $C$ is a subhypergrah of $K_i^{i-1}$ or all members of $C$ contains a common $(i-2)$-subset. In the latter case we have $|C|\leq \D_{i-1}(H_i)$.
\end{proof}

We can strengthen Property v) of $\bouquet$ as follows and apply it in some proofs.
\begin{enumerate}
    \item[v')] There are no three edges $e_1$, $e_2$, $e_3\in H$ such that $|e_1 \cap e_2|=|e_2\cap e_3|=\ell-1$ and $|e_1\cap e_3|=\ell-2$ for some $\ell \ge 3$.
\end{enumerate}
Indeed, suppose such $e_1$, $e_2, e_3$ exist. Since $\ell-1\ge 2$, by Properties i) and ii) of $\bouquet$, we have $e_1, e_2, e_3\in H_\ell$. Then, $|e_1\cap e_3|=\ell-2$ forces that $\ell=3$. Now $e_1$, $e_2, e_3$ contradict Property v) of $\bouquet$.

\section{Key lemma and a proof of Theorem~\ref{thm:main}}
\label{sec:key}
 
In this section we state a key lemma, from which we can derive Theorem~\ref{thm:main} easily. First we introduce several variables. 
Given $N, T, k$ as in Theorem~\ref{thm:main}, we let  
\begin{align}\label{eq:ebM}
\eps=\frac{1}{\log T}, \quad \b = \frac1{1+\eps}, \quad M_0 =\log T \quad \text{and} \quad M= \frac{\log T}{2}.
\end{align}
We construct two sequences $\{\alpha_m\}_{m=0}^{M}$ and $\{\gamma_m\}_{m=1}^{M}$ as follows:
\begin{align}\label{eq:ag1}
\a_{0} = M_0^{\frac1{k-1}}, \quad \a_m = \left(M_0+ 1 + \b + \dots + \b^{m-1} \right)^{\frac1{k-1}} 
 \quad \text{and} \quad \g_m = \a_m - \a_{m-1} 
\end{align}
for $1\le m\le M$. 
Observe that $\{ \a_m \}$ is an increasing sequence and hence $\g_m\ge 0$ for all $m$. Next we will estimate the values of $\a_m$ and $\g_m$.
For every $0\le m\le M$, since $\eps m \le 1/2 $, we have
\begin{align*}
(1+\eps)^m\le e^{\eps m} \le e^{1/2} < 2, \quad \text{and consequently,} \quad \frac12 < \frac{1}{(1+\eps)^m} = \b^m \le 1.
\end{align*}
Thus, we derive the following bounds for $\a_m^{k-1}$:
\begin{align}
 \log T \le M_0+\frac m2 \le \a^{k-1}_{m} = M_0 +1 + \b + \dots + \b^{m-1} \le M_0+m\le \frac32 \log T.
 \label{eq:amk1}
\end{align}
For $0\le m\le M-1$, since
\[
\g_{m+1} = \a_{m+1} - \a_m = \frac{\a_{m+1}^{k-1}-\a_m^{k-1}}{ \sum_{i=0}^{k-2} \a_{m+1}^{k-2-i} \a_m^i } = \frac{\b^m}{ \sum_{i=0}^{k-2} \a_{m+1}^{k-2-i} \a_m^i },
\]
it follows that
\begin{align}\label{eq:gm1}
\frac {1/2}{(k-1)(\frac{3}{2}\log T)^{\frac{k-2}{k-1}}}  \le \frac{\b^{m}}{(k-1)\a_{m+1}^{k-2}} \le \g_{m+1} \le \frac{\b^{m}}{ (k-1) \a_{m}^{k-2} }  \le  \frac1{(k-1)(\log T)^{\frac{k-2}{k-1}}}
\end{align}
by \eqref{eq:amk1}. 

The next lemma uses the two sequences just described to control the parameters of the hypergraphs $H^m$ obtained in the AKPSS algorithm. Given real numbers $a$ and $b$, we say that $a=(1\pm \eps)b$ if $(1-\eps)b\leq a\leq (1+\eps)b$.

\begin{lemma}\label{lem:key}
Given integers $N, T, k$ and hypergraph $H$ as in Theorem~\ref{thm:main}, let $\eps, \b, M_0, M$ be defined as in \eqref{eq:ebM} and $\{\alpha_m\}_{m=0}^{M}$ and $\{\gamma_m\}_{m=1}^{M}$ as in \eqref{eq:ag1}. Suppose $\{t_m\}_{m=0}^M, \{n_m\}_{m=0}^M$ are sequences of integers such that 
\[
t_m = \frac{T}{e^m}, \quad \text{and} \quad  n_m = (1\pm \eps)^m \frac{N}{e^m}, \quad \text{where } e=2.718....
\]
Then there exist collections of sets $\{C_{m}\}_{m=1}^M$ and $\{W_m\}_{m=1}^M$ such that for every $1\leq m \leq M$, the AKPSS algorithm produces hypergraph $H^m$ with vertex set $V_m$ and set $I_m \subseteq V(H)$ satisfying
\begin{enumerate}[i)]
    \item $H^{m} \in \bouquet$;
    \item $|I_{m}|\ge \frac{n_{m-1}}{et_{m-1}}(1-\eps)\gamma_m$;
    \item $|V_m| = n_m(1\pm \eps)$;
    \item $\D_1(H^m_i) \le (1+\eps)^{m}\binom{k-1}{k-i}\a_{m}^{k-i} t_m^{i-1}$ for $i=2,\ldots,k$;
    \item $\D_{i-1}(H^m_{i}) \leq (1+\eps)^{m}\frac{t_m}{(\log t_m)^{i+1}}$ for $i=3,\ldots,k$. 
\end{enumerate}
\end{lemma}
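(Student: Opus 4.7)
The plan is to prove the lemma by induction on $m$. The base case $m=0$ reduces to the hypothesis of the theorem: $\alpha_0^{k-i} = M_0^{(k-i)/(k-1)} = (\log T)^{(k-i)/(k-1)}$ exactly matches the bound $\Delta_1(H_i) \leq T^{i-1}(\log T)^{(k-i)/(k-1)}$, and $\binom{k-1}{k-i} \geq 1$ absorbs the multiplicative constant.

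For the inductive step, first apply Lemma~\ref{lem:AR} to $H^m$ to produce a supergraph in $\bouquet$ that is almost regular outside a small exceptional set $B$. Then sample $C_{m+1}$ by including each vertex of $V_m$ independently with probability $p_{m+1} := \gamma_{m+1}/t_m$. The motivation for this choice is the telescoping identity
\[
\sum_{i=2}^{k}\binom{k-1}{k-i}\alpha_m^{k-i}\gamma_{m+1}^{i-1} \;=\; (\alpha_m + \gamma_{m+1})^{k-1} - \alpha_m^{k-1} \;=\; \beta^m,
\]
which makes the expected number of edges of $H^m$ whose link through any vertex $v$ lies entirely inside $C_{m+1}$ at most $(1+\eps)^m \beta^m = 1$, since $(1+\eps)\beta = 1$. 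A Poisson-type estimate then yields $\mP(v \notin D_{m+1}) \approx 1/e$, giving the factor-$e$ shrinkage per step required in iii).

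Define the waste set $W_{m+1}$ to consist of $B$ together with every vertex $v$ (respectively every $(j-1)$-subset $S$) at which the degree target in iv) or v) would fail. An edge of $H^{m+1}_j$ through $v$ arises from some $e \in H^m_i$ with $i \geq j$, $v \in e$, $|e \cap I_{m+1}| = i - j$, and the remaining $j-1$ vertices of $e$ in $V_{m+1}$. Combining the inductive bound on $\Delta_1(H^m_i)$ with the per-vertex survival probabilities $\mP(u \in V_{m+1}) \approx 1/e$ and $\mP(u \in I_{m+1}) \approx p_{m+1}/e$ yields
\[
\mE[\deg_{H^{m+1}_j}(v)] \;\leq\; (1+\eps)^m t_{m+1}^{j-1} \sum_{i \geq j}\binom{k-1}{k-i}\binom{i-1}{i-j}\alpha_m^{k-i}\gamma_{m+1}^{i-j} \;=\; (1+\eps)^m \binom{k-1}{k-j}\alpha_{m+1}^{k-j} t_{m+1}^{j-1},
\]
where the identity $\binom{k-1}{k-i}\binom{i-1}{i-j} = \binom{k-1}{k-j}\binom{k-j}{i-j}$ and the binomial expansion $(\alpha_m + \gamma_{m+1})^{k-j} = \alpha_{m+1}^{k-j}$ yield the closed form. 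The remaining factor of $(1+\eps)$ in the target is reserved for concentration: the edge-indicator variables have only the mild local dependence permitted by the bounded $\Delta_1$ in $\bouquet$, so Lemma~\ref{lem:AI} applies, and a union bound over all $v$ and $S$ forces $|W_{m+1}| \leq \eps n_m$.

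The five properties are then verified as follows: i) by Lemma~\ref{lem:structural}; ii) by $\mE|I_{m+1}| \approx \gamma_{m+1} n_m/(e t_m)$ and concentration; iii) by $\mE|V_{m+1}| \approx n_m/e$ and concentration; and iv)--v) by construction of $W_{m+1}$. The main obstacle will be the bookkeeping for iv)--v): the analogous computation for $\Delta_{j-1}(H^{m+1}_j)$ sums contributions from $H^m_i$ for $i \geq j$, requiring a uniform bound on $\deg_{H^m_i}(S)$ for every fixed $(j-1)$-set $S$. Proposition~\ref{clm:BouStr} together with property v$'$) of $\bouquet$ guarantees that the edges of $H^m_i$ containing $S$ either form a bouquet sharing a common $(i-1)$-set (so $\deg_{H^m_i}(S) \leq \Delta_{i-1}(H^m_i)$) or lie inside a single $K_{i+1}^i$ (so $\deg_{H^m_i}(S) \leq i+1$), ensuring that no forbidden pile-up can inflate the degree contribution beyond what the identity above predicts.
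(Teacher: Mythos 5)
Your proposal follows the same route as the paper: induction on $m$, Lemma~\ref{lem:AR} to regularize outside a small set, sampling with $p_{m+1}=\gamma_{m+1}/t_m$ so that the telescoping identity $\sum_{i=2}^k\binom{k-1}{k-i}\alpha_m^{k-i}\gamma_{m+1}^{i-1}=\beta^m$ makes $\mE|H_x[C]|=1$, defining the waste set from the almost-regular exceptional set plus vertices failing degree targets, and the Vandermonde-type rearrangement $\sum_{i\ge j}\binom{k-1}{k-i}\binom{i-1}{j-1}\alpha^{k-i}\gamma^{i-j}=\binom{k-1}{k-j}(\alpha+\gamma)^{k-j}$ to close the induction on $\Delta_1$. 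All of this matches.

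One step, however, is seriously underestimated. You write that the events $Y_e$ used to concentrate $\deg_{i\to j}(x)$ ``have only the mild local dependence permitted by the bounded $\Delta_1$ in $\bouquet$, so Lemma~\ref{lem:AI} applies.'' That is not true as stated: $Y_e$ and $Y_f$ can interact whenever $N(e)\cap N(f)\setminus\{x\}\ne\emptyset$, so the naive dependency degree is of order $|N(N(e))|\approx\Delta_1(H)^2\approx t^{2(k-1)}$, which is far too large for the Hajnal--Szemer\'edi/Chernoff bound in Lemma~\ref{lem:AI} to give anything. The paper spends real effort (Claims~\ref{cl:funnyintersection}, \ref{clm:ffg}, \ref{cl:orientation}, and Propositions~\ref{clm:DDS}, \ref{clm:edge_dep}) exploiting Properties iii), iv), v) of $\bouquet$ — in particular the absence of clean $4$-cycles — to show the dependency graph actually has maximum degree $O(\Delta_{i-1}(H_i))\le O(t/(\log t)^{i+1})$, and it is this smaller quantity, not $\Delta_1$, that makes the exponent in Lemma~\ref{lem:AI} win. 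Without that structural reduction the concentration step fails, so this is the crux of why the $\bouquet$ hypothesis (rather than merely degree bounds) is needed. Similarly, for the codegree target v), it is not enough to invoke Proposition~\ref{clm:BouStr}; one must again bound the dependency graph $D_K$ (Proposition~\ref{clm:DDS}, via Claim~\ref{cl:funnyintersection}) before Lemma~\ref{lem:AI} can be applied. Two minor slips: the waste set should contain $N(B)$ rather than $B$ (so that links of surviving vertices avoid $B$ entirely), and the bound $|W_{m+1}|\le\eps n_m$ is much too weak for ii); what is used is that $Z\cap C=\emptyset$ by construction and $|N(B)|+|Z|\lesssim \eps\gamma n_m/(e t_m)$, i.e. small compared to $|I'|$, not to $n_m$.
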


Theorem~\ref{thm:main} easily follows from Lemma~\ref{lem:key}.

\begin{proof}[Proof of Theorem~\ref{thm:main}]

Let $H=\{H_2,\ldots,H_k\}$ be the hypergraph on $N$ vertices given by Theorem~\ref{thm:main}. Applying the AKPSS algorithm with the parameter choices given by Lemma \ref{lem:key} gives a set 
\[
I=\bigcup_{m=1}^MI_m,
\]
where $\{I_m\}_{m=1}^M$ are disjoint and satisfy
\[
|I_m|\ge  \frac{n_{m-1}}{et_{m-1}}(1-\eps)\g_{m} \ge \frac{N}{e T}(1-\eps)^{m}\g_m.
\]

By Proposition \ref{cl:independence}, we have that $I$ is an independent set of $H$ and therefore $\a(H)\geq |I| = \sum_{m=1}^{M} | I_m |$. Thus a computation gives that
\begin{align*}
\a(H) &\ge \frac{N}{e T} \sum_{m=1}^{M} \g_m (1-\eps)^{m} \ge \frac{N}{e T}(1-\eps)^{M} \sum_{m=1}^{M} \g_m = \frac{N}{e T}(1-\eps)^{M}(\a_{M}-\a_0)\\
&  \ge \frac N{e T}(1- M\eps) \left( \left( M_0 + \frac{M}2 \right)^{1/(k-1)} - M_0^{1/(k-1)} \right) \quad \text{by \eqref{eq:amk1}}\\
&  = \frac N{e T}(1- M\eps) \left( \left( \log T + \frac{\log T}4 \right)^{1/(k-1)} - (\log T)^{1/(k-1)} \right)\\ 
& = \frac N{e T} \left( 1 - \frac12  \right) \left( \left(\frac54\right)^{1/(k-1)} - 1\right)  (\log T)^{1/(k-1)} \quad \text{as } \eps M = \frac12 \\
&=\Omega\left(\frac{N}{T}(\log T)^{1/(k-1)}\right). \qedhere
\end{align*}
\end{proof}

\section{Proof of Lemma~\ref{lem:key}}
For the computations to follow, we always consider $k$ as a constant. The proof is by induction on $m$. We will prove that there exist choices of $\{C_{m+1}\}_{m=0}^{M-1}$ and $\{W_{m+1}\}_{m=0}^M$ such that for every $0\leq m \leq M-1$ the following holds. If the graph $H^m$ satisfies conditions i), iii), iv) and v) of the lemma, then the graph $H^{m+1}$ satisfies conditions i), iii), iv) and v) and $I_{m+1}$ satisfies condition ii). Note that for $m=0$, the hypergraph $H^0=H$ is the one given by Theorem \ref{thm:main}. Clearly $H^0=\{H^0_2,\ldots,H^0_k\}$ satisfies condition i), iii), iv) and v) since
\begin{align*}
   & \Delta_1(H^0_i)\leq T^{i-1}(\log T)^{\frac{k-i}{k-1}}\leq \binom{k-1}{k-i}\a_0^{k-i}t_0^{i-1} \quad \text{for} \quad 2\leq i \leq k \quad \text{and} \\
   & \Delta_{i-1}(H^0_i) \leq \frac{T}{(\log T)^{i+1}}=\frac{t_0}{(\log t_0)^{i+1}} \quad \text{for} \quad 3\leq i \leq k.
\end{align*}

In order to simplify the notation, for the proof exposition we are going to denote $H^m$ by $H=\{H_2,\ldots,H_k\}$ and $H^{m+1}$ by $H^*=\{H_2^*,\ldots,H_k^*\}$. Let
\[
n = n_m = (1\pm \eps)^m \frac{N}{e^m}, \quad t= t_m =\frac{T}{e^m}, \quad \text{where  } 0\le m< \frac{\log T}2.
\]
Recall that  $(\log N)^3 \le T \le N^{1/4k}$. It follows that
\begin{align}\label{eq:tn}
t^{4k} = \left( \frac{T}{e^m} \right)^{4k} \le \frac{T^{4k}}{e^m} (1-\eps)^m \le \frac{N}{e^m} (1-\eps)^m \le n 
\end{align}
because $1-\eps=1 -1/\log T \ge 1/e$. 
As $t= T/ e^m$ and $0\le m < \log T/2$, we have 
\begin{align}\label{eq:tlogn}
t \ge \frac{T}{e^{(\log T)/2}} = \sqrt{T}
\ge (\log N)^{3/2} \ge (\log n)^{3/2},
\end{align}
and
\begin{align}\label{eq:Tt}
\frac{\log T}2 \le \log t \le \log T.
\end{align}
For the induction step of Lemma~\ref{lem:key}, we will simply write $\a := \a_{m}$ and $\g := \g_{m+1} = \a_{m+1} - \a_{m}$. 
By \eqref{eq:amk1}, \eqref{eq:gm1}, and \eqref{eq:Tt}, we have 
\begin{align}
\label{eq:ag}
1\le \a = \Theta \left( (\log t)^{\frac1{k-1} } \right), \quad 1 > \g = \Theta\left( (\log t)^{-\frac{k-2}{k-1} }\right), \quad \text{and}
\end{align}
\begin{align}\label{eq:aga}
(\a + \g)^{k-1} - \a^{k-1} = \a_{m+1}^{k-1} - \a_m^{k-1} = \b^m = \left(\frac1{1+\eps}\right)^m.
\end{align}

As described in the AKPSS algorithm, the first step of the algorithm is to apply the Almost Regular Lemma (Lemma \ref{lem:AR}) to $H:=H^m$ and obtain a superhypergraph $\tilde{H} \supseteq H$ on $V:=V_m=V(H)$ and a subset $B:=B_{m+1} \subseteq V$ satisfying conditions i)--iii) of Lemma \ref{lem:AR}. By \eqref{eq:ag} and the induction hypothesis, we have
\begin{align*}
|B| &\le k^2\left(1+\sum_{i=2}^k(i-1)\D_1(H_i)\right)^3 \\
&\le k^2\left(1+ (1+\eps)^m \left( \sum_{i=2}^k (i-1)\binom{k-1}{k-i}\a^{k-i}t^{i-1} \right) \right)^3 = O(t^{3(k-1)}).
\end{align*}
Recall that $N(B)= \bigcup_{x\in B} N(x)$. It follows that
\begin{align}\label{eq:NB}
    |N(B)|\leq \left(1+\sum_{i=2}^k(i-1)\D_1(H_i)\right)|B|= O(t^{4(k-1)}).
\end{align}

From now on we regard $\tilde{H}=\{\tilde{H}_2,\ldots,\tilde{H}_k\}$ as our underlying hypergraph, but similar in the description of the algorithm, for simplicity on notation, we still write it as $H=\{H_2,\ldots,H_k\}$.  
Thus, for all $x\in V\setminus B$, we have
\begin{align}\label{degrees}
\deg_{H_i}(x)=(1+\eps)^m\binom{k-1}{k-i}\a^{k-i}t^{i-1}
\end{align}
for every $2\leq i \leq k$. 

\subsection{Construction of \texorpdfstring{$C_{m+1}$}{Cm+1} and \texorpdfstring{$W_{m+1}$}{Wm+1}}
\label{sec:51}

We now describe how to construct the sets $C:=C_{m+1}$ and $W:=W_{m+1}$ in the AKPSS algorithm. The set $C$ is \emph{a random set of $V$, in which we independently select every vertex $x\in C$ with probability $p:=p_{m+1}=\g/ t$}. 

The set $W$ requires more preparation. Recall that $D$ is the set of all $x\in V$ such that $e\subseteq C$ for some $e\in H_x$. Let
\[
V' := V\setminus (B\cup C\cup D) \quad \text{and} \quad I':=C\setminus (B \cup D).
\] 
Let $H'$ be the hypergraph with $V(H')=V'$ and
\[
E(H') \text{ is the multiset } \{ e \cap V': e \in H[V' \cup C] \}.
\]
There are some differences from $H'$ and the output $H^*=H^{m+1}$ of the AKPSS algorithm. The first one is that instead of contracting the edges in the induced graph $H[V'\cup I']$ as in the algorithm, for the graph $H'$ we contract the edges in the larger set $V'\cup C$. As a consequence, some edges that contain elements of $D$ might survive in $H'$ (if the vertex is in $C\cap D$), while the same edge would be deleted in $H^*$. This modification allows us to obtain better control on the degree of the vertices in $H'$.

Another difference is that we allow $H'$ to contain multiple edges and edges that properly contain another edge -- such edges will be removed at the end but it is convenient to include them in $E(H')$ during the process. Moreover, $H'$ does not use the waste $W$ in the definition. The set $W$ is going to be defined such that $B\subseteq W$. So in a certain way, one can consider the hypergraph $H'$ to be an intermediate hypergraph from $H$ to $H^*$ (see Figure \ref{fig:fig3}).

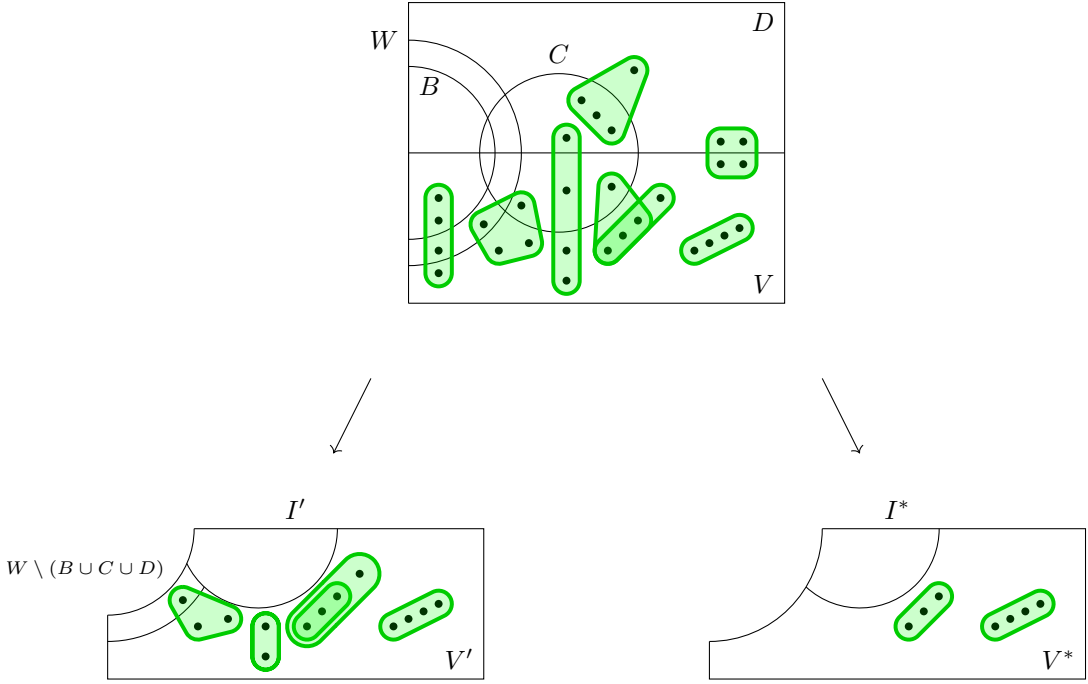
\begin{figure}[h]
	\begin{tikzpicture}[scale=1]
		
		\coordinate (A) at (0,0);
		\coordinate (B) at (0,4);
		\coordinate (C) at (5,4);
		\coordinate (D) at (5,0);
		\coordinate (M) at (0,2);
		\coordinate (N) at (5,2);
		
		\coordinate (x_1) at (4.45,2.15);
		\coordinate (y_1) at (4.15,2.15);
		\coordinate (z_1) at (4.15,1.85);
		\coordinate (w_1) at (4.45,1.85);
		
		\coordinate (x_2) at (4.4,1);
		\coordinate (y_2) at (4.2,0.9);
		\coordinate (z_2) at (4.0,0.8);
		\coordinate (w_2) at (3.8,0.7);
		
		\coordinate (x_3) at (3,3.1);
		\coordinate (y_3) at (2.3,2.7);
		\coordinate (z_3) at (2.5,2.5);
		\coordinate (w_3) at (2.7,2.3);
		
	    \coordinate (x_4) at (0.4,1.4);
		\coordinate (y_4) at (0.4,1.1);
		\coordinate (z_4) at (0.4,0.7);
		\coordinate (w_4) at (0.4,0.4);
		
		\coordinate (x_5) at (1.5,1.3);
		\coordinate (y_5) at (1,1.05);
		\coordinate (z_5) at (1.2,0.7);
		\coordinate (w_5) at (1.6,0.8);
		
		\coordinate (x_6) at (2.7,1.55);
		\coordinate (y_6) at (2.65,0.7);
		\coordinate (z_6) at (2.85,0.9);
		\coordinate (w_6) at (3.05,1.1);
		\coordinate (v_6) at (3.35,1.4);
		
		\coordinate (x_7) at (2.1,2.2);
		\coordinate (y_7) at (2.1,1.5);
		\coordinate (z_7) at (2.1,0.7);
		\coordinate (w_7) at (2.1,0.3);
	    
	    \draw (A)--(B)--(C)--(D)--(A);
	    \draw (M)--(N);
	    
	    \draw (2,2) circle (30pt);
	    \draw (0,3.15) -- (0,0.85) arc(-90:90:1.15) --cycle;
	    \draw (0,3.5) -- (0,0.5) arc(-90:90:1.5) --cycle;
	    
	    \fill (x_1) circle (1.5pt);
		\fill (y_1) circle (1.5pt);
		\fill (z_1) circle (1.5pt);
		\fill (w_1) circle (1.5pt);
		
		\fill (x_2) circle (1.5pt);
		\fill (y_2) circle (1.5pt);
		\fill (z_2) circle (1.5pt);
		\fill (w_2) circle (1.5pt);
		
		\fill (x_3) circle (1.5pt);
		\fill (y_3) circle (1.5pt);
		\fill (z_3) circle (1.5pt);
		\fill (w_3) circle (1.5pt);
		
		\fill (x_4) circle (1.5pt);
		\fill (y_4) circle (1.5pt);
		\fill (z_4) circle (1.5pt);
		\fill (w_4) circle (1.5pt);
		
		\fill (x_5) circle (1.5pt);
		\fill (y_5) circle (1.5pt);
		\fill (z_5) circle (1.5pt);
		\fill (w_5) circle (1.5pt);
		
		\fill (x_6) circle (1.5pt);
		\fill (y_6) circle (1.5pt);
		\fill (z_6) circle (1.5pt);
		\fill (w_6) circle (1.5pt);
		\fill (v_6) circle (1.5pt);
		
		\fill (x_7) circle (1.5pt);
		\fill (y_7) circle (1.5pt);
		\fill (z_7) circle (1.5pt);
		\fill (w_7) circle (1.5pt);
				
		\redge{(w_1)}{(z_1)}{(y_1)}{(x_1)}{5pt}{1.5pt}{green!80!black}{green,opacity=0.2};
		\redge{(w_2)}{(z_2)}{(x_2)}{(y_2)}{5pt}{1.5pt}{green!80!black}{green,opacity=0.2};
		\redge{(w_3)}{(z_3)}{(y_3)}{(x_3)}{5pt}{1.5pt}{green!80!black}{green,opacity=0.2};
		\redge{(w_4)}{(z_4)}{(y_4)}{(x_4)}{5pt}{1.5pt}{green!80!black}{green,opacity=0.2};
		\redge{(w_5)}{(z_5)}{(y_5)}{(x_5)}{5pt}{1.5pt}{green!80!black}{green,opacity=0.2};
	    \redge{(w_6)}{(z_6)}{(y_6)}{(x_6)}{5pt}{1.5pt}{green!80!black}{green,opacity=0.2};
	    \redge{(w_6)}{(v_6)}{(z_6)}{(y_6)}{5pt}{1.5pt}{green!80!black}{green,opacity=0.2};
	    \redge{(w_7)}{(z_7)}{(y_7)}{(x_7)}{5pt}{1.5pt}{green!80!black}{green,opacity=0.2};
	    
	    \node (q1) at (5,4) [below left, font=\small] {$D$};
	   \node (q2) at (5,0) [above left, font=\small] {$V$};
	   \node (q3) at (0,3.5) [left, font=\small] {$W$};
	   \node (q4) at (2,2cm +30pt) [above, font=\small] {$C$};
	   \node (q5) at (0,3.15) [below right, font=\small] {$B$};
	    
	    \coordinate (P1) at (-4,-3);
	    \coordinate (Q1) at (-2,-3);
	    
		\tkzInterCC[R](P1,1.15cm)(Q1,30pt) \tkzGetPoints{O1}{C1};
		
		\tkzInterCC[R](P1,1.5cm)(Q1,30pt) \tkzGetPoints{X1}{Y1};
		
		\coordinate (A1) at (-4,-5);
		\coordinate (B1) at (-4,-4.15);
		\coordinate (D1) at (30pt-2cm,-3);
		\coordinate (E1) at (1,-3);
		\coordinate (F1) at (1,-5);
		\coordinate (G1) at (-2.85,-3);
		\coordinate (H1) at (-2.5,-3);
		
		\tkzDrawArc[R with nodes, color=black](P1,1.15cm)(B1,G1);
		\tkzDrawArc[R with nodes, color=black](P1,1.5cm)(B1,Y1);
		\tkzDrawArc[R with nodes, color=black](Q1,30pt)(C1,D1);
		\draw (G1)--(E1)--(F1)--(A1)--(B1);
	    
	    \coordinate (x'_2) at (0.4,-4);
		\coordinate (y'_2) at (0.2,-4.1);
		\coordinate (z'_2) at (0,-4.2);
		\coordinate (w'_2) at (-0.2,-4.3);
		
		\coordinate (y'_5) at (-3,-3.95);
		\coordinate (z'_5) at (-2.8,-4.3);
		\coordinate (w'_5) at (-2.4,-4.2);
		
		\coordinate (y'_6) at (-1.35,-4.3);
		\coordinate (z'_6) at (-1.15,-4.1);
		\coordinate (w'_6) at (-0.95,-3.9);
		\coordinate (v'_6) at (-0.65,-3.6);
		
		\coordinate (z'_7) at (-1.9,-4.3);
		\coordinate (w'_7) at (-1.9,-4.7);
		
		\fill (x'_2) circle (1.5pt);
		\fill (y'_2) circle (1.5pt);
		\fill (z'_2) circle (1.5pt);
		\fill (w'_2) circle (1.5pt);
		
		\fill (y'_5) circle (1.5pt);
	    \fill (z'_5) circle (1.5pt);
		\fill (w'_5) circle (1.5pt);
		
		\fill (y'_6) circle (1.5pt);
		\fill (z'_6) circle (1.5pt);
		\fill (w'_6) circle (1.5pt);
		\fill (v'_6) circle (1.5pt);
		
		\fill (z'_7) circle (1.5pt);
		\fill (w'_7) circle (1.5pt);
	   
	   \redge{(w'_2)}{(z'_2)}{(x'_2)}{(y'_2)}{5pt}{1.5pt}{green!80!black}{green,opacity=0.2};
	   \redge{(w'_5)}{(z'_5)}{(y'_5)}{(y'_5)}{5pt}{1.5pt}{green!80!black}{green,opacity=0.2};
	   \redge{(w'_6)}{(z'_6)}{(y'_6)}{(w'_6)}{5pt}{1.5pt}{green!80!black}{green,opacity=0.2};
	   \redge{(w'_6)}{(v'_6)}{(z'_6)}{(y'_6)}{7.5pt}{1.5pt}{green!80!black}{green,opacity=0.2};
	   \redge{(w'_7)}{(z'_7)}{(w'_7)}{(z'_7)}{5pt}{1.5pt}{green!80!black}{green,opacity=0.2};
	   
	   \node (q6) at (1,-5) [above left, font=\small] {$V'$};
	   \node (q7) at (-4.3,-3.8) [above, font=\tiny] {$W\setminus (B\cup C\cup D)$};
	   \node (q8) at (-1.5,-3) [above, font=\small] {$I'$};

	   \draw[->] (-0.5,-1)--(-1,-2);
	   
	   
	    \coordinate (P2) at (4,-3);
	    \coordinate (Q2) at (6,-3);
	    
		\tkzInterCC[R](P2,1.5cm)(Q2,30pt) \tkzGetPoints{O2}{C2};
		
		\coordinate (A2) at (4,-5);
		\coordinate (B2) at (4,-4.5);
		\coordinate (D2) at (30pt+6cm,-3);
		\coordinate (E2) at (9,-3);
		\coordinate (F2) at (9,-5);
		\coordinate (G2) at (5.5,-3);
		
		\tkzDrawArc[R with nodes, color=black](P2,1.5cm)(B2,G2);
		\tkzDrawArc[R with nodes, color=black](Q2,30pt)(C2,D2);
		\draw (G2)--(E2)--(F2)--(A2)--(B2);
		
		\coordinate (x''_2) at (8.4,-4);
		\coordinate (y''_2) at (8.2,-4.1);
		\coordinate (z''_2) at (8,-4.2);
		\coordinate (w''_2) at (7.8,-4.3);

		\coordinate (y''_6) at (6.65,-4.3);
		\coordinate (z''_6) at (6.85,-4.1);
		\coordinate (w''_6) at (7.05,-3.9);
		
		\fill (x''_2) circle (1.5pt);
		\fill (y''_2) circle (1.5pt);
		\fill (z''_2) circle (1.5pt);
		\fill (w''_2) circle (1.5pt);

		\fill (y''_6) circle (1.5pt);
		\fill (z''_6) circle (1.5pt);
		\fill (w''_6) circle (1.5pt);
	   
	   \redge{(w''_2)}{(z''_2)}{(x''_2)}{(y''_2)}{5pt}{1.5pt}{green!80!black}{green,opacity=0.2};
	   \redge{(w''_6)}{(z''_6)}{(y''_6)}{(w''_6)}{5pt}{1.5pt}{green!80!black}{green,opacity=0.2};
	   
	    \node (q9) at (9,-5) [above left, font=\small] {$V^*$};
	    \node (q10) at (6.5,-3) [above, font=\small] {$I^*$};
	   
	   \draw[->] (5.5,-1)--(6,-2);
	   
	\end{tikzpicture}
	\caption{Hypergraphs $H'$ and $H^*$.}
	\label{fig:fig3}
\end{figure}

We define the degree $i$ to $j$ of a vertex $x\in V$
\[
\deg_{i\to j}(x)=\left|\left\{ e\in (H_i)_x:\: e\subseteq V'\cup C,\, |e\cap V'|=j-1 \right\}\right|.
\]
Note that for $x\in V'$, the degree $\deg_{i\to j}(x)$ is exactly the number of edges incident to $x$ of size $i$ that after the operation become of size $j$ in $H'$. We will prove in a later step of the proof that the expected value of $\deg_{i\to j}(x)$ for $x\in V\setminus N(B)$ conditioned to $x\not \in C$ is
\[
\mu_{i\to j}(x):=\mE(\deg_{i\to j}(x)\mid x\notin C)=\left(1\pm \frac{\eps}{4}\right)\binom{i-1}{j-1}\deg_{H_i}(x)p^{i-j}e^{-j+1}.
\]
For $2\leq j\leq i \leq k$, define the sets $Z_{i,j}$ by
\[
Z_{i,j}=\left\{ x\in V\setminus (N(B)\cup C):\: \deg_{i\to j}(x)> \left(1+\frac{\eps}{4}\right)\mu_{i\to j}(x)\right\},
\]
i.e., the vertices in $V\setminus (N(B)\cup C)$ that the actual $i$ to $j$ degree is substantially larger than the expected value. Let
\[
Z=\bigcup_{2\leq j \leq i \leq k}Z_{i,j}.
\]
We define the waste set $W$ as 
\[
W=N(B)\cup Z.
\]

\subsection{Proof scheme}

Since $H\in \bouquet$, by Lemma \ref{lem:structural} we obtain that $H^* \in \bouquet$. The proof of condition ii) to v) are more extensive. The following is a scheme of the main steps of our proof. Throughout the paper, \emph{w.h.p.} means \emph{with probability $1-o(1)$}.
\begin{description}
\item[Step 1] Show that $\mathbb{P}(x\not\in D) \approx 1/e$ for all $x\in V\setminus B$.

\item[Step 2] Show that w.h.p. $|V'|\approx n/e$ and $ |I'| \approx \frac{n\g}{et}$.

\item[Step 3] Show that $\mE(\deg_{i\to j}(x)\mid x\notin C) \approx \binom{i-1}{j-1} \deg_{H_i}(x) p^{i-j} e^{1-j}$ for $x\in V\setminus N(B)$ and $2\le j\le i\le k$.

\item[Step 4] Show that with probability at least $3/4$ we have $|Z|\leq \frac{\eps}{3}\frac{\g n}{et}$ and $\deg_{H'_i}(x)\le (1+\eps)^{m+1}\binom{k-1}{k-i}\a_{m+1}^{k-i} (t/e)^{i-1}$ for $2\leq i\leq k$ and $x\in V'\setminus (N(B)\cup Z)$.

\item[Step 5] Show that w.h.p $\deg_{H'_i}(S)\le (1+\eps)^{m+1} (t/e) / (\log (t/e))^{i+1}$ for all $(i-1)$-sets $S\subseteq V'$.
\end{description}
We complete the proof by letting $V^* = V_{m+1}= V' \setminus W$, $I =I_{m+1}= I'\setminus W$, and removing the appropriate edges from $H'$ to obtain the desired hypergraph $H^*$ (all multiple edges, edges that properly contain another edge from $H'$ and edges that intersect $D$ and the waste set $W$ before or after the contraction). 

\vspace{0.1cm}

\subsection{Step 1}

The goal of this step is to prove that $\mP(x\not\in D)\approx e^{-1}$ for $x\in V\setminus B$. Given $x\in V\setminus B$, we first show that $\mE\left( |H_x[C]| \right)$, the expected number of edges of $H_x$ that lie in $C$, is exactly one.

\begin{proposition}\label{clm:degC}
For every $x\in V\setminus B$, we have $\mE\left( |H_x[C]| \right) = 1$.
\end{proposition}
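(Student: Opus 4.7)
\textbf{Proof plan for Proposition~\ref{clm:degC}.} The plan is a direct computation using linearity of expectation, the almost-regularity identity for degrees in $H = \tilde H^m$, and the combinatorial identity baked into the recursion for $\alpha_m$ and $\gamma_m$.

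First, fix $x \in V \setminus B$. Every edge $f \in H_x$ has the form $f = e \setminus \{x\}$ for a unique $e \in H_i$ containing $x$, with $|f| = i-1$. Since $C$ is formed by keeping each vertex of $V$ independently with probability $p = \gamma/t$, and since $x \notin f$, we have $\mathbb{P}(f \subseteq C) = p^{i-1}$ regardless of whether $x \in C$. Linearity of expectation then gives
\[
\mathbb{E}\bigl(|H_x[C]|\bigr) \;=\; \sum_{i=2}^{k} \deg_{H_i}(x)\, p^{i-1}.
\]

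Next, since $x \in V \setminus B$, the almost-regularity identity \eqref{degrees} yields $\deg_{H_i}(x) = (1+\eps)^m \binom{k-1}{k-i}\alpha^{k-i} t^{i-1}$ for every $2 \le i \le k$. Substituting this and $p = \gamma/t$:
\[
\mathbb{E}\bigl(|H_x[C]|\bigr) \;=\; (1+\eps)^m \sum_{i=2}^{k} \binom{k-1}{k-i} \alpha^{k-i}\, t^{i-1} \cdot \frac{\gamma^{i-1}}{t^{i-1}} \;=\; (1+\eps)^m \sum_{i=2}^{k} \binom{k-1}{k-i} \alpha^{k-i}\gamma^{i-1}.
\]

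Finally, reindex with $j = i-1$ and apply the binomial theorem:
\[
\sum_{i=2}^{k} \binom{k-1}{k-i} \alpha^{k-i}\gamma^{i-1} \;=\; \sum_{j=1}^{k-1} \binom{k-1}{j} \alpha^{(k-1)-j}\gamma^{j} \;=\; (\alpha+\gamma)^{k-1} - \alpha^{k-1}.
\]
By \eqref{eq:aga}, $(\alpha+\gamma)^{k-1} - \alpha^{k-1} = \beta^m = (1+\eps)^{-m}$. Hence $\mathbb{E}(|H_x[C]|) = (1+\eps)^m \cdot (1+\eps)^{-m} = 1$, as claimed.

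There is no real obstacle here: the whole point of how $p$, $\alpha$, and $\gamma$ were chosen in \eqref{eq:ebM}--\eqref{eq:aga} is precisely to make this expectation equal to $1$, so that in Step~1 of the proof scheme one can then conclude $\mathbb{P}(x \notin D) \approx e^{-1}$ by a Poisson/Janson-type approximation. The only subtlety to flag is that the computation uses the \emph{updated} hypergraph $\tilde H^m$ (still denoted $H$), on which the Almost Regular Lemma guarantees the exact equality \eqref{degrees} for $x \notin B$; without this step one would only have the upper bound from condition iv) of Lemma~\ref{lem:key} and the proposition would degenerate to $\mathbb{E}(|H_x[C]|) \le 1$.
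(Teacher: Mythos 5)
Your proof is correct and follows the same computation as the paper: linearity of expectation, substitution of the almost-regularity identity from Lemma~\ref{lem:AR}, $p=\gamma/t$, the binomial theorem, and finally \eqref{eq:aga}. The only difference is that you spell out the binomial reindexing step explicitly and add a remark about why almost-regularity matters, both of which the paper leaves implicit.
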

\begin{proof}
For $2\leq i \leq k$, let $H_{i,x} := (H_i)_x = \{ e\setminus \{x\}: e\in H_i\}$. 
Let $\mu_i = \mE\left( |H_{i, x}[C] |\right)$ be the expected number of edges of $H_{i, x}$ that lie in $C$. Then $\mu_i = p^{i-1} |H_{i, x}|$.
Let $\mu = \sum_{i=2}^k \mu_i = \mE\left( |H_x[C]| \right) $. We have 
\begin{align*}
\mu &= \sum_{i=2}^k p^{i-1} |H_{i, x}| =  \sum_{i=2}^k \left( \frac{\g}{t} \right)^{i-1} \deg_{H_i}(x)  \\
&= (1+\eps)^m \sum_{i=2}^k \binom{k-1}{k-i}\a^{k-i} \g^{i-1} \quad \text{because $x\not\in B$}\\ 
&=(1+\eps)^m\left((\a+\g)^{k-1}-\a^{k-1}\right) \\
&= (1+\eps)^m \beta^m = 1 \quad \text{by \eqref{eq:aga}}. \qedhere
\end{align*}
\end{proof}

Next we prove the following general claim which we will also use in later steps. Note that Proposition~\ref{clm:tH} below immediately implies that $\mathbb{P}(x\not\in D) = e^{-1+ o(1)}$ by letting $\tilde{H_x}= H_x$. This is due to the fact that $x\notin D$ is the same as $e\not\subseteq C$ for every $e\in H_x$. 

\begin{proposition}\label{clm:tH}
Let $x\in V\setminus B$ and $\tilde{H_x}$ be a subgraph of $H_x$ with $|\tilde{H_x}|= |H_x| - O\left(\frac{t}{(\log t)^{2}}\right)$. Then 
\begin{align}\label{eq:51}
e^{- 1 - 1/t } \le  \prod_{i=2}^k (1 - p^{i-1})^{| \tH_{i, x}|} \le \mP \left( \bigwedge_{e\in \tilde{H}_x} \left\{ e\not\subseteq C\right\} \right) = e^{-1 + O(1/(\log t)^{2})}.
\end{align}
\end{proposition}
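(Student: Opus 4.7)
The proposition makes three separate assertions about $q := \mP\bigl(\bigwedge_{e\in \tH_x}\{e \not\subseteq C\}\bigr)$, which I would prove in turn. The middle inequality is Harris/FKG: each event $\{e\not\subseteq C\}$ is a monotone decreasing function of the random set $C$, which is generated by independent Bernoulli trials, so $q \ge \prod_{e\in\tH_x}\mP(e\not\subseteq C) = \prod_{i=2}^k(1-p^{i-1})^{|\tH_{i,x}|}$, using $|e| = i-1$ for $e \in \tH_{i,x}$.

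The leftmost inequality is a Taylor estimate. Since $p = \g/t \le 1/2$ for $t$ large, $\log(1-p^{i-1}) \ge -p^{i-1} - p^{2(i-1)}$, hence
\[
\log \prod_i (1-p^{i-1})^{|\tH_{i,x}|} \ge -\sum_i |\tH_{i,x}|p^{i-1} - \sum_i |H_{i,x}| p^{2(i-1)} \ge -1 - p \ge -1 - 1/t,
\]
where $\sum_i |H_{i,x}|p^{i-1} = 1$ by Proposition~\ref{clm:degC} (which also bounds the first sum via $|\tH_{i,x}|\le |H_{i,x}|$), and factoring a single $p^{i-1} \le p$ out of the second sum reduces it to $p \cdot \sum_i |H_{i,x}| p^{i-1} = p = \g/t \le 1/t$.

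For the rightmost equality, the lower bound $q \ge e^{-1-1/t} = e^{-1+O((\log t)^{-2})}$ follows by combining the first two parts, using $1/t \ll (\log t)^{-2}$ from \eqref{eq:tlogn}. For the matching upper bound I would apply Janson's inequality to the increasing events $A_e = \{e\subseteq C\}$, giving $q \le \exp(-\mu + \Delta/2)$ with $\mu = \sum_i |\tH_{i,x}|p^{i-1}$ and $\Delta = \sum_{e\ne f,\,e\cap f\ne\emptyset} p^{|e\cup f|}$. The mean differs from $1$ by at most $|H_x\setminus\tH_x|\cdot p = O\bigl((t/(\log t)^2)(\g/t)\bigr) = O((\log t)^{-2})$. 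For $\Delta$, properties i) and ii) of $\bouquet$ force two intersecting link edges to lie in the same layer $\tH_{i,x}$ for some $i\ge 3$ and to share exactly $i-2$ vertices, so $|e\cup f| = i$; such pairs are precisely pairs of edges of $H_i$ sharing a common $(i-1)$-set through $x$. Using $\D_{i-1}(H_i) \le t/(\log t)^{i+1}$ together with the double count $\sum_T \deg_{H_i}(T) = (i-1)\deg_{H_i}(x) = O(\a^{k-i}t^{i-1})$ over $(i-1)$-sets $T \ni x$, the number of such pairs is $O(\a^{k-i}t^i/(\log t)^{i+1})$, each contributing $p^i$, giving $\Delta_i = O(\a^{k-i}\g^i/(\log t)^{i+1})$. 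With $\a^{k-i}\g^i = \Theta((\log t)^{k/(k-1)-i})$ from \eqref{eq:ag}, summing over $i \ge 3$ yields $\Delta = O((\log t)^{-5}) \ll (\log t)^{-2}$.

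The main obstacle is the accurate accounting for $\Delta$: without the structural restrictions of $\bouquet$ (intersecting link edges forced into a single layer with the maximal allowed intersection), intersecting pairs could cluster and inflate $\Delta$, spoiling the Poisson-type approximation. Here properties i), ii), and the tight hypothesis $\D_{i-1}(H_i) \le t/(\log t)^{i+1}$ conspire to keep $\Delta$ negligible; everything else is a routine combination of FKG, Taylor expansion, and Janson's inequality, together with the almost-regularity supplied by Lemma~\ref{lem:AR}.
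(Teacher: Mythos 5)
Your proof is correct and follows essentially the same strategy as the paper: estimate $\mu$, use FKG/Harris for the lower bound, use Janson's inequality for the upper bound, and control $\Delta$ via the $\bouquet$ structure and the $\D_{i-1}(H_i)$ hypothesis. The one cosmetic difference is that you apply FKG and Janson once to the full collection $\tH_x$, whereas the paper first factorizes the probability as a product over uniformities $i$ (an exact equality, by Property~i) of $\bouquet$, since cross-layer link edges are disjoint) and then applies Janson separately within each layer. This buys nothing and costs nothing: in your single-shot application, the same cross-layer disjointness ensures that only same-layer pairs contribute to $\Delta$, so the estimates coincide. Your Taylor-expansion route to the lower bound is likewise algebraically equivalent to the paper's use of $1-y \ge e^{-y(1+y)}$. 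Your $\Delta = O((\log t)^{-5})$ is marginally sharper than the paper's $O((\log t)^{-4})$, but both are dominated by the $O((\log t)^{-2})$ coming from $|H_x\setminus\tH_x|$, so the discrepancy is immaterial.
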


\begin{proof}
We first compute $\mE( |\tH_x[C] |) $.
Suppose that $\tH_x = (\tH_{2, x}, \dots, \tH_{k, x})$, where $\tH_{i,x}$ is  the $(i-1)$-uniform subhypergraph of $\tH_x$.
Let $\mu_i = \mE\left( |\tH_{i, x}[C] | \right)$ and $\mu = \sum_{i=2}^k \mu_i = \mE\left( |\tH_x[C]| \right) $. 
Since $|\tH_{i, x}| = \deg_{H_i}(x) - O(t/(\log t)^{2})$, similar calculations as in Proposition~\ref{clm:degC} give that $\mu = 1 - O(1/(\log t)^{2})$. 

Since $H \in \bouquet$, for $i\neq j$, every pair of edges $e\in \tH_{i,x}$ and $f\in \tH_{j,x}$ are disjoint. Thus
\begin{align}\label{eq:Ce}
\mP \left( \bigwedge_{e\in \tH_x} \left\{ e\not\subseteq C \right\} \right) = \prod_{i=2}^k \mP\left( \bigwedge_{e\in \tH_{i, x}} \left\{ e\not\subseteq C \right\} \right). 
\end{align}
We use Janson's inequality to bound $\mP\left( \bigwedge_{e\in \tH_{i, x}} \left\{ e\not\subseteq C \right\} \right)$ for each $2\leq i\leq k$. 
Janson's inequality (see \cite[Theorem 8.1.1]{MR2437651}) gives that
\begin{align}\label{eq:Janson}
(1 - p^{i-1})^{|\tH_{i, x}|} \le \mP\left(\bigwedge_{e\in \tH_{i, x}} \left\{ e\not\subseteq C \right\} \right) \le e^{-\mu_i + \Lambda_i /2},
\end{align}
where 
\[
\Lambda_i = \sum_{\substack{e,f\in \tH_{i,x}\\e\cap f\neq \emptyset}} \mP(\left\{e,f \subseteq C\right\}). 
\]
Note that $\Lambda_2=0$ since every member of $H_{2,x}$ is a singleton. For $3\leq i \leq k$, since $H\in \bouquet$, we obtain that if $e\cap f\neq \emptyset$, then $|e\cap f|=i-2$. Then there are at most $ \deg_{H_i}(x) \binom{i-1}{i-2}\D_{i-1}(H_i)$ pairs $e, f\in \tH_{i, x}$ such that $e\cap f \ne \emptyset$. Thus
\begin{align*}
\Lambda_i &\le p^i \deg_{H_i}(x)  \binom{i-1}{i-2} \D_{i-1}(H_i) \le (i-1)\left( \frac{\g}{t} \right)^i (1+ \eps)^{2m} \binom{k-1}{k-i} \frac{\a^{k-i} t^i}{(\log t)^{i+1}}\\
&\le  (i-1)\binom{k-1}{i-1} e^{2m\eps} \frac{ \g^i \a^{k-i} }{ (\log t)^{i+1} } \\
&= O\left((\log t)^{\frac{(k-i)-i(k-2)}{k-1}-(i+1)}\right)
\quad \text{by \eqref{eq:ag} and }\eps m \le \frac12.
\end{align*}
Since $k-i < i(k-2)$, it follows that $\Lambda_i = O\left( \frac1{ (\log t)^{i+1} } \right)$ for $i\geq 3$. The second inequality of \eqref{eq:Janson} thus gives $\mP(\bigwedge_{e\in \tH_{i, x}} \left\{ e\not\subseteq C \right\} ) = e^{-\mu_i + O((1/\log t)^{i+1})} = e^{-\mu_i + O((1/\log t)^{4})}$ for $i\ge 3$ and $\mP(\bigwedge_{e\in \tH_{2, x}} \left\{ e\not\subseteq C \right\} ) = e^{-\mu_2 + \Lambda_2/2} = e^{-\mu_2}$.
Applying \eqref{eq:Ce} and $\mu =  \sum_{i=2}^k \mu_i = 1 - O(1/(\log t)^{2})$, we derive the desired upper bound
\[
\mP \left( \bigwedge_{e\in \tilde{H}_x} \left\{ e\not\subseteq C \right\} \right) = \prod_{i=2}^k \mP\left( \bigwedge_{e\in \tH_{i, x}} \left\{ e\not\subseteq C \right\} \right) = e^{-\mu + O(1/(\log t)^{4})} = e^{-1 + O(1/(\log t)^{2})}. 
\]

On the other hand, since $1- y\ge e^{-y (1+y)}$ for $0\le y\le 1/2$, $p^{i-1}<p<\frac{1}{t}$ and $ |\tH_{i, x}| \le  |H_{i,x}|$, the first term in \eqref{eq:Janson} can be bounded below as 
\begin{align*}
(1 - p^{i-1})^{| \tH_{i, x}|} &\ge e^{ - \left( 1 + p^{i-1} \right) p^{i-1}  |\tH_{i,x}| } \ge e^{ - \left( 1 + 1/t \right) p^{i-1}  |H_{i,x}| }. 
\end{align*} 
Proposition~\ref{clm:degC} gives that $\sum_{i=2}^k p^{i-1}  |H_{i,x}|  = 1$. Thus, by  \eqref{eq:Ce}, we obtain the desired lower bound
\[
\mP \left( \bigwedge_{e\in \tH_x} \left\{ e\not\subseteq C \right\} \right)  \ge \prod_{i=2}^k (1 - p^{i-1})^{| \tH_{i, x}|} \ge e^{-1 - 1/t}. \qedhere
\]
\end{proof}

\vspace{0.1cm}

\subsection{Step 2}

Now we are going to estimate the sizes of $|V'|$ and $|I'|$. For $x\in V\setminus B$, Step~1 gives that $\mathbb{P}(x\not\in D) = e^{-1+ O(1/(\log t)^{2})}$. Recall that the event $x\not\in C$ concerns $x$ itself while the event 
$x\not\in D$ concerns the neighbors of $x$. Hence the events $x\not\in C$ and $x\not\in D$ are independent. Together with $V'= V\setminus (B\cup C\cup D)$ and the fact that $|B|=O(t^{3(k-1)})$, this gives that
\begin{align}
\mE(|V'|)&= \sum_{x\in V\setminus B} \mP(x\not\in C)\mP(x\not\in D) =  (1- p) \sum_{x\in V\setminus B} e^{-1+ O(1/(\log t)^{2})}  \nonumber \\
&= \left(1 -\g/t \right) \left(n - O(t^{3(k-1)}) \right) e^{-1+ O(1/(\log t)^{2})} \nonumber \\
&= n e^{-1+O(1/(\log t)^{2})} \quad \text{as} \quad \g/t = O(1/t ) \ \text{and} \ t^{3k-3}/n = O(1/t).
\label{eq:V'}
\end{align}
Analogously we can derive that $I' = C\setminus (B\cup D)$ is an independent set with
\[
\mE(|I'|) = \sum_{x\in V\setminus B} \mP(x\in C)\mP(x\not\in D) =\frac{\g n}{t} e^{-1+ O(1/(\log t)^{2})}.
\]
Next we apply Lemma~\ref{lem:AI} to obtain the concentrations of $|V'|$ and $|I'|$. Note that two events $\{x \in V'\}$ and $\{y\in V'\}$ are independent if they do not share a neighbor. Thus, for every set $S\subseteq V\setminus B$, the event $x \in V'$ is mutually independent of the collection of events $\{y \in V': y \in S\}$ if $N^2(x) \cap S=\emptyset$. Since $|N(x)|\leq 1+\sum_{i=2}^k(i-1)\D_1(H_i)=O( t^{k-1} )$, it follows that $|N^2(x)| = O(t^{2(k-1)})$. Thus, the event $x\in V'$ is independent of all but at at most $c_k t^{2(k-1)}$ events $\{y\in V'\}
$ for some constant $c_k$ depending on $k$.
Applying Lemma~\ref{lem:AI} for $p=\mP(x\notin C)\mP(x\notin D)$, $s=c_k t^{2(k-1)}$ and $n=|V\setminus B|$, we obtain 
\[
\mP\left( | |V'| - \mE(|V'|) | > \frac{\eps}4  \mE(|V'|) \right)\le 2(c_k t^{2(k-1)}+1) \exp \left( - \frac{\eps^2n (e^{-1+o(1)})}{ 3(c_k t^{2(k-1)} + 1)} \right)= o(1).
\]
Thus  w.h.p. $|V'| = (1 \pm \eps/4) \mE(|V'|)$. Since
$\eps =1/{\log T} \ge 1/ (2\log t) \gg 1/ (\log t)^2$,
we have 
\[
\left(1 \pm \frac{\eps}4\right) e^{O(1/(\log t)^{2})} = 1 \pm \frac{\eps}3.
\]

Therefore, it follows from \eqref{eq:V'} that w.h.p. 
\begin{align} \label{eq:V'2}
|V'| = \left(1 \pm \frac{\eps}3\right) \frac{n}{e}.
\end{align}
Analogously we can derive that w.h.p. 
\begin{align} \label{eq:I'}
| I' |\ge \left(1 - \frac{\eps}3\right) \frac{\g n}{et}.
\end{align} 

\vspace{0.05cm}

\subsection{Step 3}

For $x\in V$ and $2\le j\le i\le k$, we have defined 
\[
\deg_{i\to j}(x)= \left| \{e\in H_{i, x}: e\subseteq V'\cup C,\, |e\cap V'| = j-1\}\right|. 
\]
Our goal is to show that, for every $x\in V \setminus N(B)$,
\begin{align}\label{eq:dij1}
\mE(\deg_{i\to j}(x)\mid x\notin C)  =\left(1\pm \frac{\eps}4 \right)\binom{i-1}{j-1} \deg_{H_i}(x)  p^{i- j} e^{-j +1}. 
\end{align}
By \eqref{degrees} and the definition $p= \g/t$, this means that
\begin{align}
\mE(\deg_{i\to j}(x)\mid x\notin C) = \left(1\pm \frac{\eps}4 \right) (1+\eps)^m\binom{k-1}{k-i}\binom{i-1}{j-1}\a^{k-i}\g^{i-j}\left(\frac{t}{e}\right)^{j-1} \label{eq:dij}.
\end{align}

Fix $x\in V\setminus N(B)$ (thus $x \not \in B$ and $f\cap B= \emptyset$ for all $f\in H_x$) and consider the random space where we condition $x\notin C$.
For every edge $e\in H_{i, x}$, we define the event 
\begin{align}\label{eq:Ye}
Y_e := \left\{|e \cap V'| = j-1 \ \text{and} \ | e \cap C|= i- j\ \middle| \ x\notin C \right\}.
\end{align}
Note that 
\[
\mE(\deg_{i\to j}(x)\mid x\notin C) = \sum_{e\in H_{i,x}} \mP(Y_e)
\]
for $x\in V\setminus N(B)$. To prove \eqref{eq:dij1}, it suffices to prove that
\begin{align} \label{eq:PYe}
\mP(Y_e) = \left(1\pm \frac{\eps}4 \right)\binom{i-1}{j-1} p^{i- j} e^{-j +1}.
\end{align}
for every $e\in H_{i, x}$. 

Fix $e\in H_{i,x}$. For every $R \in \binom{e}{j-1}$, we 
define the event $E_R$ as
\begin{align*}
    E_R=\left\{ R \cap (C\cup D)=\emptyset  \text{  and  } (e\setminus R) \subseteq C \ \middle| \ x \not \in C\right\}.
\end{align*}
Since $x\notin N(B)$, we have $e\cap B=\emptyset$. Consequently, if $R\cap (C\cup D)=\emptyset$, then $R\subseteq V'$. 
Hence,
\[
\mP(Y_e) = \mP\left( \bigvee_{R\in \binom{e }{j-1}} E_R \right) = \sum_{R\in \binom{e}{j-1}} \mP( E_R ),
\]
because for $R\ne R'\in \binom{e}{j-1}$, the two events $E_R$ and $E_{R'}$ are disjoint (in particular, $E_R$ implies that $(R\setminus R')\cap C=\emptyset$ while $E_{R'}$ implies that $R\setminus R'\subseteq C$).
To prove \eqref{eq:PYe}, it suffices to show that for every $R \in \binom{e}{j-1}$,
 \begin{align}\label{eq:PET}
 \mP(E_R) =\left(1\pm \frac{\eps}4 \right) p^{i- j} e^{-j +1}. 
\end{align}

In the rest of this step we prove \eqref{eq:PET}. For convenience, we write 
\[
\overline{e} = e\cup \{x\}.
\]
Fix $R \in \binom{e}{j-1}$ and let $S=  e\setminus R$. 
Using the fact that $\mP(X\cap Y)= \mP(X| Y) \mP(Y)$ for any events $X, Y$ and the fact that two events $P\subseteq C$ and $Q\cap C= \emptyset$ are independent for any disjoint sets $P, Q$, we observe that 
\begin{align*}
    \mP(E_R)&= \mP\left(R \cap (C\cup D)=\emptyset \mid S \subseteq C,  x \not \in C\right) \, \mP(S \subseteq C \mid  x \not \in C) \\
    		&=  \mP\left(R \cap D=\emptyset \mid R\cap C=\emptyset, S \subseteq C,  x \not \in C\right) \, \mP\left( R\cap C=\emptyset \mid S \subseteq C,  x \not \in C \right) \, \mP(S \subseteq C) \\	
		&= \mP\left(R \cap D= \emptyset \mid F_R\right) \, \mP(R\cap C=\emptyset ) \, \mP(S \subseteq C),
\end{align*}
where 
\[
F_R=\{(R\cup \{x\})\cap C = \emptyset \text{ and } S\subseteq C\}.
\]
Therefore, We have 
\begin{align}\label{eq:ET'}
    \mP(E_R)&=\mP\left(R\cap D=\emptyset \ \middle| \ F_R\right) (1-p)^{j-1} p^{i-j}.
\end{align}

\smallskip

We first find the desired lower bound for $\mP(E_R)$ by bounding $ \mP(R\cap D=\emptyset \ | \ F_R)$ from below. Note that $R\cap D=\emptyset$ means that, for every $y \in R$ and every $f\in H_y$, we have $f\not\subseteq C$. 
Furthermore, since $\{(f\setminus \overline e)\not\subseteq C\}$ implies that $\{f\not\subseteq C\}$, we have
\begin{align*}
    \mP\left(R\cap D=\emptyset \ \middle| \ F_R\right) = \mP\left(\bigwedge_{y \in R}\bigwedge_{f\in H_{y}}\left\{f\not \subseteq C \right\}\,\middle| \, F_R\right) \ge \mP\left(\bigwedge_{y \in R} \bigwedge_{f \in H_{y}} \left\{(f\setminus \overline{e})\not \subseteq C \right\}\,\middle| \, F_R\right).
\end{align*}
Let $E_{f,y}$ be the event $\{(f\setminus \overline{e}) \not \subseteq C\}$. Since $E_{f,y}$ concerns only the elements of $V\setminus  \overline{e}$, it is independent of $F_R$. Furthermore, let
\begin{align*}
    \cA_{f,y}=\left\{A\subseteq (V\setminus \overline{e}):\:  (f\setminus \overline{e}) \not \subseteq A \right\}.
\end{align*}
be the family of all possible choices of $C$ in $V\setminus\overline{e}$ that make the event $E_{f,y}$ true. It is clear that $\cA_{f,y}$ is closed under subsets, i.e., $\cA_{f,y}$ is a decreasing family.
Thus, by the FKG inequality (see \cite[Theorem 6.3.2]{MR2437651}) we obtain that
\begin{align*}
   \mP\left(R\cap D=\emptyset \ \middle| \ F_R\right) \ge \mP\left(\bigwedge_{y \in R} \bigwedge_{f \in H_{y}} E_{f,y} \right)\geq \prod_{y \in R}\prod_{f\in H_{y}} \mP\left( E_{f,y} \right).
\end{align*}
Since $H \in \bouquet$, if $f\in H_{y}$ for some $y\in R$, then either $\overline{e}\cap f =\emptyset$ or $i\geq 3$, $|\overline{e}\cap f|=i-2$ and $|f|=i-1$. 
In the former case, we have $\mP(E_{f,y})=\mP((f\setminus\overline{e})\not\subseteq C)=\mP(f\not\subseteq C)=1-p^{|f|}$. In the latter case, note that all such $f$'s are in the connected component of $H_y$ that contains $\overline{e}\setminus \{y\}$. Thus, by Proposition~\ref{clm:BouStr}, there are at most $\Gamma_i:= \max\{i, \D_{i-1}(H_i) \}$ such $f$'s. Furthermore, since $|f\setminus \overline{e}|=1$, we have $\mP(E_{f,y})= 1-p$. Therefore,
\begin{align*}
    \mP(R\cap D=\emptyset \ | \ F_R)&\geq (1-p)^{\Gamma_i } \prod_{y\in R}\prod_{\ell=2}^k(1-p^{\ell-1})^{|H_{\ell,y}|} \ge  (1-p)^{\Gamma_i } e^{-(1+ 1/t)(j-1)}
\end{align*}    
because $\prod_{\ell=2}^k(1-p^{\ell-1})^{|H_{\ell,y}|} \ge e^{-(1 + 1/t) }$ from \eqref{eq:51} on Proposition \ref{clm:tH}. By \eqref{eq:ET'}, it follows that 
\[    
    \mP(E_R) \geq p^{i-j}(1-p)^{\Gamma_i+j-1} e^{-(1+ 1/t)(j-1)}.
\]
Since $1- p \ge e^{-p(1+p)}$ and $\Gamma_i + j- 1 \le (1+\eps)^m t / (\log t)^{i+1} + j-1 \le 2t / (\log t)^{i+1}$, we have
\[
(1-p)^{\D_{i-1}(H_i)+j-1} \ge \exp\left( -p(1+p)\frac{2t}{(\log t)^{i+1}} \right) \ge  \exp\left( - \frac{3}{(\log t)^{i+1}} \right).
\]
We thus derive the lower bound in \eqref{eq:PET}:
\[
 \mP(E_R) \ge p^{i-j} e^{-(1+ 1/t)(j-1) - 3/(\log t)^{i+1}}
 \ge \left(1-\frac{\eps}4 \right) p^{i-j}e^{-j+1}.
\]

Next, we find the desired upper bound for $\mP(E_R)$ by bounding $\mP(R\cap D=\emptyset\ | \  F_R)$ from above. For every $y\in e$, let 
\[
\tH_y  = \{f\in H_y: f\cap N(\overline{e}\setminus \{y\}) = \emptyset \}
\] 
be the family of edges of $H_y$ that do not intersect any $g \in H_z$ for every $z\in \overline{e}\setminus \{y\}$ (including $x$). The set $\tH_y$ is disjoint from $\tH_z$ for any $z\in \overline{e}\setminus \{y\}$ and is also disjoint from $\overline{e}$. Therefore, the events $\{f \not \subseteq C: f\in \tH_y \}$ for $y\in e$ are mutually independent and also independent from $F_R$. Hence,
\begin{align}
    \mP(R\cap D=\emptyset \, | \, F_R)
    &\le \mP\left(\bigwedge_{y \in R}\bigwedge_{f\in \tH_{y}}\{f\not \subseteq C\} \,\middle| \, F_R\right)
    = \prod_{y\in R}\mP\left(\bigwedge_{f\in \tH_{y}}\{f\not\subseteq C\}\right). \label{eq:E'T}
 \end{align}

For every $y \in e$, we will apply Proposition \ref{clm:tH} to bound $\mP\left(\bigwedge_{f\in \tH_{y}}\{f\not\subseteq C\}\right)$. For this purpose, we need to show that $|\tH_y|$ is close to $|H_y|$.

\begin{proposition}\label{cl:vertex_dep}
$|H_y\setminus \tH_y|=O(t/(\log t)^{4})$. 
\end{proposition}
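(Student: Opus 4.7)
The proof plan classifies each bad edge $f \in H_y \setminus \tH_y$ by a witness triple $(z, w, g)$ with $z \in \overline{e}\setminus\{y\}$, $w \in f$, and $g \in H$ satisfying $\{z, w\} \subseteq g$, and then case-analyzes the triple $(\overline{e}, f' := f \cup \{y\}, g)$ using the $\bouquet$ properties.

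If $|f' \cap \overline{e}| \geq 2$ (which covers every case $w \in \overline{e}$), Properties (i)(ii) of $\bouquet$ force $f' \in H_i$ with $|f' \cap \overline{e}| = i-1$, contributing at most $k\,\D_{i-1}(H_i) = O(t/(\log t)^{i+1})$ edges. We may therefore assume $f' \cap \overline{e} = \{y\}$ and $w \notin \overline{e}$, and split on $|g \cap \overline{e}|$. If $|g \cap \overline{e}| = 1$ then $(\overline{e}, f', g)$ is a 3-cycle through distinct vertices $y, z, w$: in the linear subcase Property (iii) forces $f', g \in H_2$ (since $\overline{e} \in H_i$ with $i \geq 3$), so $f = \{w\}$ is a common $H_2$-neighbor of $y$ and $z$, and Property (iv) applied to $\{y, w_1\}, \{z, w_1\}, \{z, w_2\}, \{y, w_2\}$ limits this to at most one $w$ per $z$; in the non-linear subcase $|f' \cap g| \geq 2$, a short computation yields $f' = (g \setminus \{z\}) \cup \{y\}$ and places $f', g$ in a bouquet of $H_\ell$ with $\ell \geq 3$ of size at most $\D_{\ell-1}(H_\ell) = O(t/(\log t)^{\ell+1})$. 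If instead $|g \cap \overline{e}| \geq 2$, then $g \in H_i$ with $g = (\overline{e}\setminus\{v\}) \cup \{w\}$ for some $v \in \overline{e}\setminus\{z\}$; the subcase $v \neq y$ forces $y \in g$ and hence $|f' \cap g| \geq 2$, so inclusion-exclusion on the $(i-1)$-sets $g \cap \overline{e}$ and $f' \cap g$ gives $|f' \cap \overline{e}| \geq i-2$, contradicting $|f' \cap \overline{e}| = 1$ via Property (ii) when $i \geq 4$ and via Property (v) when $i = 3$.

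The hard remaining subcase is $v = y$, so $g = (\overline{e}\setminus\{y\}) \cup \{w\}$ with $w$ ranging over $S := \{w' \in V\setminus\overline{e} : (\overline{e}\setminus\{y\})\cup\{w'\} \in H_i\}$. Let $\mathcal{F}$ be the family of bad edges $f'$ arising here and set $W_{f'} := f' \cap S \neq \emptyset$. For any $f_1', f_2' \in \mathcal{F}$ with non-comparable witness sets, choose $w_j \in W_{f_j'} \setminus W_{f_{3-j}'}$; in cyclic order $(f_1', g_{w_1}, g_{w_2}, f_2')$ the pairwise intersections along the 4-cycle are $\{w_1\}$, $\overline{e}\setminus\{y\}$, $\{w_2\}$, and a set containing $y$, yielding a non-degenerate 4-cycle (for $i \geq 3$), while the diagonal intersections $f_1'\cap g_{w_2}$ and $g_{w_1}\cap f_2'$ both vanish because $f_j'\cap\overline{e} = \{y\}$ and $w_{3-j}\notin f_j'$. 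This violates Property (iv), so $\{W_{f'} : f' \in \mathcal{F}\}$ is a chain under inclusion; in particular every pair in $\mathcal{F}$ shares $y$ together with some vertex of $W_{f_1'}\cap W_{f_2'}$, giving $|f_1'\cap f_2'| \geq 2$. Properties (i)(ii) and Proposition \ref{cl:bouquet} then place all of $\mathcal{F}$ in a bouquet inside a single $H_\ell$ with $\ell \geq 3$, of size at most $\max(\ell+1, \D_{\ell-1}(H_\ell)) = O(t/(\log t)^4)$. Summing over the $\leq k-1$ choices of $z$ completes the bound. The main obstacle will be the careful verification that the 4-cycle in the hard subcase has clean intersections with the correct cyclic ordering, after which the chain-to-bouquet reduction is immediate.
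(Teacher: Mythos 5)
Your proof takes a genuinely different route from the paper's. The paper bounds, for each $y'\in\overline e\setminus\{y\}$, the set $H_y^{y'}:=\{f\in H_y:f\cap N(y')\ne\emptyset\}$ by proving (Claim~\ref{cl:numcomponent}) that it meets at most $2k-1$ connected components of $H_y$, each of size $O(t/(\log t)^4)$ by Proposition~\ref{clm:BouStr}. You instead attach to each bad edge a witness triple $(z,w,g)$ and split according to the intersection pattern of $\overline e$, $f'$, $g$. The hard subcase (Case 2b with $v=y$), including the chain-to-bouquet reduction, is carried out correctly, but there are two gaps.

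First, in the non-linear subcase of Case 2a you derive $f'=(g\setminus\{z\})\cup\{y\}$ and then claim this places $f',g$ in a bouquet of size at most $\D_{\ell-1}(H_\ell)$. That bounds only the one bouquet through $g\setminus\{z\}$, but distinct bad edges in this subcase have distinct sets $g\setminus\{z\}=f'\setminus\{y\}$ and generically lie in different bouquets, so this does not bound their number. What you actually need is that every two such $f_1',f_2'$ satisfy $|f_1'\cap f_2'|\ge 2$, after which Proposition~\ref{cl:bouquet} applies; and this is true, since if $T_j:=f_j'\setminus\{y\}$ were disjoint then $(f_1',g_1,g_2,f_2')$ would be a $4$-cycle with clean intersections (both diagonals equal $T_1\cap T_2$), violating Property~iv) of $\bouquet$. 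That is exactly the chain argument you spell out for Case 2b, and it needs to appear here as well.

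Second, you assume $i\ge 3$ throughout — explicitly when invoking Property~iii) of $\bouquet$ in the linear subcase of Case 2a, and implicitly in the non-degeneracy of the Case~2b $4$-cycle and the vacuity of Case 1. But in Step 3 the proposition is used for every $2\le i\le k$, and for $i=2$ one has $\overline e=\{x,y\}\in H_2$. Now $\overline e$ itself supplies one of the two $H_2$-edges demanded by Property~iii), so a linear $3$-cycle $\overline e,f',g$ is permitted with $g=\{x,w\}\in H_2$ and $f'$ of arbitrary higher uniformity. For each such $w\in N_{H_2}(x)$ there may be up to $\D_{\ell-1}(H_\ell)$ edges $f'\ni y,w$ in some $H_\ell$, $\ell\ge 3$, and there are up to $\deg_{H_2}(x)=\Theta(t(\log t)^{(k-2)/(k-1)})$ choices of $w$; a per-witness count therefore does not come anywhere near $O(t/(\log t)^4)$. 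This is precisely what the paper's $2k-1$ bound on the number of components (obtained by the bipartite averaging step inside Claim~\ref{cl:numcomponent}) controls, and your argument would need to import that step or a substitute for it.
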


\begin{proof}
For $y'\in \overline{e}\setminus \{y\}$, let $H^{y'}_y = \{f \in H_y:\: f \cap N(y')\ne \emptyset\}$ be the set of edges in $H_y$ that share vertices with $N(y')$. Since $N(\overline{e} \setminus\{y\})=\bigcup_{y'\in \overline{e}\setminus \{y\}} N(y')$, we have $|H_y\setminus \tH_y| \leq \sum_{y'\in \overline{e}\setminus \{y\}} |H_y^{y'}|$. Thus it suffices to show that $|H_y^{y'}|= O(t/(\log t)^{4})$ for any fixed $y'\in \overline{e}\setminus \{y\}$.


\begin{claim}\label{cl:numcomponent}
The set $H_y^{y'}$ has at most $2k-1$ connected components.
\end{claim}

\begin{proof}
We start by partitioning $H_y^{y'}$ into two sets
\[
F_1 = \{ f \in H_y^{y'}: f \cap (\overline{e}\setminus\{y\}) =\emptyset \} \quad \text{and} \quad F_2 = H_y^{y'} \setminus F_1.
\]
Since every element in $F_2$ intersects with $\overline{e}\setminus\{y\}$, we obtain that $F_2$ is connected and therefore it is in a component of $H_y^{y'}$. It suffices to show that $F_1$ has at most $2k-2$ connected components. 

Suppose to the contrary, that $F_1$ has $2k-1$ connected components $C_1,\ldots, C_{2k-1}$. For every $1\leq \ell \leq 2k-1$, consider an edge $f_{\ell}\in C_{\ell}$ (thus $f_1, \dots, f_{2k-1}$ are pairwise disjoint). By the definition of $F_1$, for every $j$, there exists $g_{\ell}\in H_{y'}$ such that $f_{\ell}\cap g_{\ell}\neq \emptyset$. We claim that $y\notin g_{\ell}$ for every $j \in [2k-1]$. 
Suppose, for instance, we have $y\in g_1$. 
For convenience, write $\overline{f_1}:= f_1\cup \{y\}$ and $\overline{g_1}:= g_1\cup \{y'\}$.
Then $|\overline{f_{1}}\cap \overline{g_{1}}|\geq 2$ and $|\overline{g_1}\cap \overline{e}|\geq 2$. By Property i) and ii) of $\bouquet$, this implies that $\overline{f_1}, \overline{g_1} \in H_i$, $i\geq 3$ and $|\overline{f_1}\cap \overline{g_1}|=|\overline{g_1}\cap \overline{e}|=i-1$.
Since $f_1\cap \overline{e}= \emptyset$ (by the definition of $F_1$), it follows that $i-2 \le |\overline{f_1}\cap \overline{e}| =1$. Hence, $i=3$ and three edges $\overline{f_1}, \overline{g_1}$ and $\overline{e}$ contradict Property v) of $\bouquet$.

We form a bipartite graph $G$ with vertices $f_1, \dots, f_{2k-1}$ and $g_1, \dots, g_{2k-1}$ such that $f_{\ell}$ is adjacent to $g_j$ if and only if $f_{\ell}\cap g_j\ne \emptyset$. Since all $f_{\ell}$'s are disjoint and $|g_j|\leq k-1$, we have $\deg_G(g_j)\le k-1$ for every $j$. By averaging, there is a vertex ${\ell}_0\in [2k-1]$ such that $\deg_G(f_{\ell_0})\le k-1$. Since $\deg_G(g_{\ell_0})\le k-1$ and $\deg_G(f_{\ell_0})+\deg_G(g_{\ell_0})< 2k-1$, there exists ${\ell}_1\in [2k-1]$ such that $f_{\ell_1}\cap g_{\ell_0}= \emptyset$ and $f_{\ell_0}\cap g_{\ell_1}= \emptyset$. Together with the fact $y\notin g_j$ for every $j\in [2k-1]$, we obtain that $f_{\ell_0}\cup\{y\}, f_{\ell_1}\cup\{y\}, g_{\ell_1}\cup\{y'\}$ and $g_{\ell_0}\cup\{y'\}$ forms a $4$-cycle with clean intersections, which contradicts Property iv) of $\bouquet$.
\end{proof}

Claim \ref{cl:numcomponent} says that $H_y^{y'}$ has at most $2k-1$ components. Therefore, the edges of $H_y^{y'}$ are in at most $2k-1$ components of $H_y$. By Proposition \ref{clm:BouStr}, each component of $H_y$ has size at most $\max\{r, \D_{r-1}(H_r)\}= O(t/ (\log t)^4)$ for some $3\leq r \leq k$. Hence,
\begin{align*}
    |H_y^{y'}|= (2k-1) \, O\left(\frac{t}{(\log t)^4}\right) =O\left(\frac{t}{(\log t)^4}\right),
\end{align*}
which complete the proof of Proposition \ref{cl:vertex_dep}.
\end{proof}

By Proposition \ref{cl:vertex_dep}, we have $|\tH_{y}| = |H_{y}| - O(t/(\log t)^{4})$. Thus, Proposition \ref{clm:tH} gives that $\mP\left(\bigwedge_{f\in \tH_{y}}\{f\not\subseteq C\}\right) \le e^{-1+ O(1/(\log t)^2)}$.  By \eqref{eq:E'T}, we obtain the desired upper bound in \eqref{eq:PET}:
\[
   \mP(E_R) \leq p^{i-j}(1-p)^{j-1}e^{-j+1+O(1/(\log t)^{2})} \leq \left(1+ \frac{\eps}4 \right)p^{i-j}e^{-j+1}.
\]

\vspace{0.1cm}

\subsection{Step 4}

For $x\in V\setminus N(B)$ and $2\le j\le i\le k$, we computed the mean $\mu_{i\to j}(x):= \mE(\deg_{i\to j}(x)\mid x\notin C)$ in Step 3. The goal of this step is to show that $\deg_{i\to j}(x)$ conditioned to $x\notin C$ concentrate around its mean. As a consequence, we are going to show with high probability that $\deg_{H'_i}(x)\le (1+\eps)^{m+1}\binom{k-1}{k-i}\a_{m+1}^{k-i} (t/e)^{i-1}$  for $x\in V'\setminus N(B)\cup Z$, where $|Z|\leq \frac{\eps}{3}\frac{\g n}{et}$.

Fix $x\in V\setminus N(B)$, $2\le j\le i\le k$ and consider the random space where we condition that $x\notin C$. We know that 
\[
\deg_{i\to j}(x) = \sum_{e\in H_{i, x}} \mathbbm{1}_{Y_e},
\]
where $\mathbbm{1}_{Y_e}$ is the indicator of the event $Y_e$ defined in \eqref{eq:Ye}. The concentration of $\deg_{i\to j}(x)$ depends on the correlation of events $\{Y_e : e\in H_{i,x}\}$.

We observe that given $e \in H_{i,x}$ and $Q\subseteq H_{i,x}$, the event $Y_e$ is mutually independent with the events 
$\left\{Y_f : {f\in Q} \right\}$ if 
\[
\left(N(e)\cap \left(\bigcup_{f\in Q}N(f)\right)\right)\setminus\{x\}=\emptyset.
\] 
Indeed, this holds because $Y_e$ only depends on the events $\{z\in C\}$ for $z\in N(e)\setminus \{x\}$ (since we already know $x\not\in C$). 

We define the graph $D_{i,x}$ whose vertices are the events $\{Y_e:\: e\in H_{i,x}\}$ and two vertices $Y_e,Y_f$ are adjacent if and only if $(N(e)\cap N(f))\setminus \{x\} \neq \emptyset$. Note that by the previous paragraph, the dependency graph of the events $\{Y_e:\: e\in H_{i,x}\}$ is a subgraph of $D_{i,x}$.
Thus, in order to apply the Almost Independent Lemma (Lemma~\ref{lem:AI}), it suffices to find an upper bound for $\D(D_{i, x})$. 

Our approaches and bounds are different when $i=2$ and when $i\ge 3$. For $i=2$, our bound comes from the following more general result that will also be applied in Step~5: Fix $2\le \ell\le k$. For $K\subseteq V$ with $|K|= \ell-1$, let $V_K = \{v \in V\setminus K: \{v\}\cup K\in H\}$. Define a graph $D_K$ on $V_K$ such that two vertices $u, v\in V_K$ are adjacent if and only if $(N(u)\cap N(v)) \setminus K\ne \emptyset$. Note that $D_{\{x\}} \cong D_{2, x}$.

\begin{proposition}\label{clm:DDS}
$\D(D_K)\le k-1$.
\end{proposition}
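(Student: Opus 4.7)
Fix $u \in V_K$ and write $e_u = \{u\} \cup K \in H_\ell$. We aim to bound the number of $v \in V_K$ with $(N(u) \cap N(v)) \setminus K \ne \emptyset$ by $k-1$. The strategy is to produce, for each such $v$, a ``certificate'' edge $\phi(v)$ that interacts strongly with $e_u$, and then to show (using Proposition~\ref{cl:bouquet}) that these certificates form a bouquet of controlled size, with property~(iv) closing off all escape routes.

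For each neighbor $v$, pick a witness $w \in (N(u) \cap N(v)) \setminus K$; after swapping to $w = v$ if $w = u$, assume $w \ne u$. Then there exist edges $f \ni u, w$ and $g \ni v, w$ in $H$, possibly with $f = g$. When $f \ne g$, the edges $e_u, f, g, e_v$ form a $4$-cycle with four distinct cycle-vertices $u, w, v,$ and an element of $K$, so property~(iv) forbids this cycle from being clean. Via properties~(i) and~(ii), this yields one of three certificate types: (T1) a single edge $h$ with $u, v \in h$; (T2) an edge $g \in H_\ell$ with $v \in g$, $|g \cap e_v| = \ell-1$, and a vertex of $g$ lying in $N(u) \setminus K$; or (T3) the symmetric configuration at $u$. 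For $\ell \ge 3$, property~(iii) also rules out any T1 edge from forming a ``loose'' linear $3$-cycle $e_u, h, e_v$ (which would have only one $H_2$-edge), forcing $h \in H_\ell$ with $|h \cap e_u| = \ell-1$ or $|h \cap e_v| = \ell-1$.

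With anchors $\phi(v) \in H_\ell$ for each T1 neighbor in hand, apply Proposition~\ref{cl:bouquet} to the family $\{e_u\} \cup \{\phi(v)\}$, whose members pairwise share at least $\ell-1$ vertices: either the family shares a common $(\ell-1)$-set $S \ni u$ (Case~A), or it is a subhypergraph of $K^\ell_{\ell+1}$ (Case~B). In Case~A, each anchor is $S \cup \{z\}$, and a 4-cycle $\phi(v_a), e_{v_a}, e_{v_b}, \phi(v_b)$ on two distinct ``$z$''-neighbors is clean, forbidden by~(iv); together with the at most $\ell-2$ neighbors coming from $S \setminus \{u\}$, this gives at most $\ell-1$ neighbors. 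In Case~B the parallel argument on 4-cycles $e^{(a)}, e_{y_b}, e_{y_a}, e^{(b)}$ among the $\ell$ edges of $K^\ell_{\ell+1}$ through $u$ yields the same bound. Types T2 and T3 contribute at most a bounded number of extra neighbors, handled using property~(v') to limit edges of the form $\{v, k_1, \dots, k_{\ell-2}, w\}$ sharing $w$ with a fixed edge through $u$; in total, $|\mathcal{N}| \le \ell-1 \le k-1$. The case $\ell = 2$ is easier since T2 and T3 cannot occur (they would contradict property~(i)), and the T1 argument proceeds with anchors of common size $m \in [3, k]$, giving $|\mathcal{N}| \le m-1 \le k-1$.

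\emph{Main obstacle.} The chief difficulty is the uniform treatment of the three certificate types: T1 anchors sit naturally in the bouquet around $e_u$, whereas T2 and T3 anchors share only $\ell-2$ vertices with $e_u$ and therefore fall outside the reach of Proposition~\ref{cl:bouquet}; these must be handled via the strengthened property~(v'). Once the classification is complete, property~(iv) (no clean $4$-cycles) is the workhorse that eliminates all extraneous neighbors in every scenario.
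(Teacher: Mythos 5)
The proposal takes a genuinely different route from the paper, but it contains a fatal error. The paper's proof first establishes Claim~\ref{cl:funnyintersection} (no $z\in V\setminus K$ and edges $e,e'$ with $e\cap\{u,v,z\}=\{u,z\}$, $e'\cap\{u,v,z\}=\{v,z\}$), uses it once to show that adjacency in $D_K$ forces $u$ and $v$ into a \emph{common} edge of $H$, and uses it a second time to derive a contradiction from a hypothetical vertex of degree $\ge k$ by a pigeonhole argument on a maximum-covering edge. Your argument never isolates this key claim and instead tries to bound the neighborhood by classifying certificate edges and invoking Proposition~\ref{cl:bouquet}.

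The gap is in the T1 analysis, which is the core of your argument. You assert that for $\ell\ge 3$, property~(iii) rules out a ``loose'' configuration $e_u,h,e_v$ and hence forces $|h\cap e_u|=\ell-1$ or $|h\cap e_v|=\ell-1$. This is false: $e_u\cap e_v = K$ has $|K|=\ell-1\ge 2$ vertices, so $e_u,h,e_v$ is \emph{never} a linear $3$-cycle and property~(iii) imposes no constraint. Concretely, for $\ell=3$ with $K=\{k_1,k_2\}$, the edge $h=\{u,v\}\in H_2$ satisfies $u,v\in h$ and $h\cap K=\emptyset$, and the resulting configuration $\{e_u,e_v,h\}$ violates none of the $\bouquet$ properties. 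Thus your anchors $\phi(v)$ need not share $\ell-1$ vertices with $e_u$, Proposition~\ref{cl:bouquet} cannot be applied to the family $\{e_u\}\cup\{\phi(v)\}$, and Case~A/Case~B do not go through. Beyond this, the T2/T3 accounting is only sketched, and it is not explained how the three types together are bounded by $\ell-1\le k-1$ rather than summing to something larger. To repair the argument you would essentially need to prove the paper's helper claim: that no two edges can intersect $\{u,v,z\}$ (with $z\notin K$) in $\{u,z\}$ and $\{v,z\}$ respectively -- this follows quickly from properties~(v') and~(iv) -- and then run the extremal-edge pigeonhole argument rather than the bouquet decomposition.
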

We use the following claim to facilitate the proof Proposition~\ref{clm:DDS}.

\begin{claim}\label{cl:funnyintersection}
For any $u, v\in V_K$, there are no $z\in V(H)\setminus K$ and edges $e, e'\in H$ such that $e\cap \{u, v, z\}= \{u, z\}$ and $e'\cap \{u, v, z\} = \{v, z\}$. 
\end{claim}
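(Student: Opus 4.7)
The plan is to argue by contradiction. Suppose there exist $z\in V(H)\setminus K$ and edges $e,e'\in H$ satisfying $e\cap\{u,v,z\}=\{u,z\}$ and $e'\cap\{u,v,z\}=\{v,z\}$. Set $f_1:=K\cup\{u\}$ and $f_2:=K\cup\{v\}$; by the definition of $V_K$ both lie in $H_\ell$. We may assume $u,v,z$ are pairwise distinct (the claim is used to bound degrees in $D_K$, so $u\ne v$; the conditions on $e,e'$ also force $z\ne u,v$), so $K,\{u\},\{v\},\{z\}$ are pairwise disjoint and $f_1\ne f_2$.

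My first step is to show that $|e\cap f_2|\le 1$, and symmetrically $|e'\cap f_1|\le 1$. Since $u,z\notin f_2$ and $v\notin e$, we have $e\cap f_2\subseteq K$ and $\{u,z\}\subseteq e\setminus f_2$. If $|e\cap f_2|\ge 2$, then Properties i) and ii) of $\bouquet$ force $e\in H_\ell$ with $|e\cap f_2|=\ell-1$, leaving $|e\setminus f_2|=1$ and contradicting $|\{u,z\}|=2$.

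Next I split into two cases. If both $e\cap f_2=\emptyset$ and $e'\cap f_1=\emptyset$, then $f_1,e,e',f_2$ are four pairwise distinct edges forming a $4$-cycle with consecutive linking vertices $u,z,v$, and any chosen $k\in K$, all four distinct. Since the diagonal intersections $f_1\cap e'$ and $e\cap f_2$ are empty, the $4$-cycle has clean intersections, contradicting Property iv) of $\bouquet$. Otherwise, without loss of generality $e\cap f_2=\{p\}$ with $p\in K$. Then $p\in f_1$, so $\{u,p\}\subseteq e\cap f_1$ and hence $|e\cap f_1|\ge 2$. If $\ell=2$, this already yields $f_1\subseteq e$ with $|e|\ge 3$, contradicting Property i). If $\ell\ge 3$, Properties i) and ii) force $e\in H_\ell$ with $|e\cap f_1|=\ell-1$; combined with $z\in e\setminus f_1$, this determines $e=(f_1\setminus\{y\})\cup\{z\}$ for some $y\in K\setminus\{p\}$. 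Consequently $e\cap K=K\setminus\{y\}$ has size $\ell-2$, so $|e\cap f_2|=\ell-2$, which must equal $1$. This forces $\ell=3$, and now $e,f_1,f_2\in H_3$ satisfy $|e\cap f_1|=|f_1\cap f_2|=2$ and $|e\cap f_2|=1$, the exact forbidden configuration of Property v) of $\bouquet$.

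The main obstacle is organizing the case analysis so that each branch terminates in a specific $\bouquet$ violation. The generic branch reduces to a $4$-cycle with clean intersections handled by Property iv); the subtler branch uses the tight arithmetic on intersection sizes enforced by Properties i)-ii) to squeeze $\ell$ down to exactly $3$ and invoke Property v). Notably, $e'$ plays no role in the second branch—the contradiction comes entirely from $e,f_1,f_2$—so the bookkeeping on sizes of $e\cap f_1$, $e\cap f_2$, and $f_1\cap f_2$ is the heart of the argument.
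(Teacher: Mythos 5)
Your proof is correct and follows essentially the same route as the paper: split on whether $e$ (or symmetrically $e'$) meets $K$, obtaining a clean-intersection $4$-cycle when both miss $K$ and a forbidden triple of edges otherwise. The only minor difference is that the paper invokes the strengthened Property v') directly from the intersection sizes $\ell-1,\ell-1,\ell-2$, while you use your preliminary bound $|e\cap f_2|\le 1$ to pin down $\ell=3$ and then cite the original Property v); both are valid.
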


\begin{proof}
Suppose such $z, e, e'$ exist.
If $K\cap e \ne \emptyset$, then $| (K\cup \{u\}) \cap e | \ge 2$, which implies that $e\in H_{\ell}$, $\ell\ge 3$, and $| (K\cup \{u\}) \cap e | = \ell - 1$. Hence $| (K\cup \{v\}) \cap e | = \ell - 2$, and three edges $e, K\cup \{u\}, K\cup \{v\}$ fail Property v') of $\bouquet$. The same holds if $K\cap e' \ne \emptyset$. We thus assume that $K\cap e = \emptyset = K\cap e' $. Now $K\cup \{u\}, K\cup \{v\}, e', e$ form a 4-cycle with clean intersections, contradicting Property iv) of $\bouquet$. 
\end{proof}

\begin{proof}[Proof of Proposition~\ref{clm:DDS}]
Suppose $u, v\in V_K$ are adjacent in $D_K$. Then there exists $z\in (N(u)\cap N(v))\setminus K$, which implies that $\{u, z\} \subseteq e$ and $\{v, z\} \subseteq e'$ for some $e, e'\in H$. By Claim~\ref{cl:funnyintersection}, we have either $\{u,v\}\subseteq e$ or $\{u,v\} \subseteq e'$. 
This implies that if two vertices are adjacent in $D_K$, then they belong to the same edge of $H$.

Suppose to the contrary, that there exists a vertex $x \in V_K$ such that $\deg_{D_K}(x)\geq k$. Let $e_1$ be an edge of $H$ that contains $x$ and the most neighbors of $x$ in $D_K$. Since $\deg_{D_K}(x)\geq |e_1|$, $e_1$ must omit some neighbor $y_2$ of $x$. By the paragraph above, there exists $e_2\in H$ such that $\{x, y_2\}\subseteq e_2$. By the choice of $e_1$, there exists a neighbor $y_1\in e_1\setminus e_2$ of $x$. Since $e_1\cap \{y_1, y_2, x\} = \{y_1, x\}$ and $e_2\cap \{y_1, y_2, x\} = \{y_2, x\}$, we obtain a contradiction with Claim~\ref{cl:funnyintersection}.
\end{proof}

We now bound $\D(D_{i, x})$ for $i\geq 3$.

\begin{proposition}\label{clm:edge_dep}
$\D(D_{i, x})\leq 2\D_{i-1}(H_i) + 3i- 3$ for $i\geq 3$.
\end{proposition}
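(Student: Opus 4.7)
I would fix an edge $e \in H_{i,x}$ and bound the number of $f \in H_{i,x}$, $f \ne e$, with $(N(e) \cap N(f)) \setminus \{x\} \ne \emptyset$. Writing $\bar e = e \cup \{x\}$ and $\bar f = f \cup \{x\}$, BOUQUET property (ii) forces $|\bar e \cap \bar f| \in \{1, i-1\}$, so there are two cases to analyze.

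For the case $|\bar e \cap \bar f| = i-1$, I would first show that any two such $\bar f, \bar f'$ satisfy $|\bar f \cap \bar f'| \ne i-2$: indeed, three edges $\bar f, \bar e, \bar f'$ with pairwise intersections $i-1, i-1, i-2$ would violate the strengthened Property (v') stated at the end of Section~3.3. Combined with (ii), all pairwise intersections in the family $\{\bar e\} \cup \{\text{such } \bar f\}$ have size $\ge 2$, so Proposition~\ref{cl:bouquet} applies: either the family embeds in some $K^i_{i+1}$ (and then, since all edges contain $x$, there are at most $i$ such $\bar f$) or all edges share a common $(i-1)$-subset through $x$ (and then there are at most $\Delta_{i-1}(H_i) - 1$ such $\bar f$). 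This case contributes at most $\Delta_{i-1}(H_i) + i - 1$.

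For the case $|\bar e \cap \bar f| = 1$ (so $e \cap f = \emptyset$), I would exhibit a common neighbor $z \ne x$ and classify its position. If $z \in e \cup f$, then some edge $g \in H$ contains $u \in e$ and $w \in f$ with $u \ne w$, both $\ne x$. The 3-cycle $(\bar e, g, \bar f)$ has three distinct distinguishing vertices $u, w, x$, so Property (iii) (together with the fact that $\bar e, \bar f \in H_i$ with $i \ge 3$ forces $g \notin H_2$) prevents it from being a linear 3-cycle. By (ii), $g \in H_i$ with $|g \cap \bar e| = i-1$ or $|g \cap \bar f| = i-1$, giving $g = e \cup \{w\}$ with $x \notin g$ (sub-case I) or $g = f \cup \{u\}$ with $x \notin g$ (sub-case II); Property (v') rules out both simultaneously. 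If $z \notin e \cup f$, then two distinct edges $g_1 \ni \{u,z\}$ and $g_2 \ni \{z,w\}$ yield a 4-cycle $(\bar e, g_1, g_2, \bar f)$ with distinct vertices $u, z, w, x$: if its diagonals $\bar e \cap g_2$ and $g_1 \cap \bar f$ were both empty, Property (iv) would be violated, so at least one diagonal is non-empty, which reduces this to the previous case (modulo $x$ appearing in a diagonal, a harmless technicality).

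To bound sub-case I, I would show the family $F = \{\bar f : f \cap W \ne \emptyset,\ e \cap f = \emptyset\}$, where $W = \{w \ne x : e \cup \{w\} \in H_i\}$, forms a bouquet. If $\bar f, \bar f' \in F$ had $\bar f \cap \bar f' = \{x\}$, choosing witnesses $w \in f \cap W, w' \in f' \cap W$ (necessarily distinct since $f \cap f' = \emptyset$), the 4-cycle $(\bar f,\ e \cup \{w\},\ e \cup \{w'\},\ \bar f')$ has consecutive intersections $\{w\}, e, \{w'\}, \{x\}$ with distinct representatives, and both diagonals are empty (e.g., $\bar f \cap (e \cup \{w'\}) = f \cap \{w'\} = \emptyset$ since $w' \in f'$ and $f \cap f' = \emptyset$), contradicting Property (iv). So $F$ is a bouquet and $|F| \le \Delta_{i-1}(H_i) + i$ by Proposition~\ref{cl:bouquet}. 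An analogous 4-cycle argument (using $(\bar f, f \cup \{u\}, f' \cup \{u\}, \bar f')$ for a fixed $u$) handles sub-case II. Summing the three contributions and absorbing the bouquet corrections yields $\Delta(D_{i,x}) \le 2\Delta_{i-1}(H_i) + 3i-3$.

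The main obstacle I anticipate is sub-case II with two witnesses $u \ne u'$ in $e$: the direct 4-cycle argument using $g_u$ and $g_{u'}$ breaks down because $(f \cup \{u\}) \cap (f' \cup \{u'\}) = \emptyset$, so it is not even a 4-cycle. A cross-case 4-cycle between sub-cases I and II (of the form $(\bar f, e \cup \{w\}, f' \cup \{u'\}, \bar f')$) works and prevents mixed-sub-case ``non-bouquet'' pairs, but the within-sub-case-II pairs with distinct witnesses require a canonical witness assignment (say, fix a total order on $e$ and assign each $\bar f$ to the smallest $u$ with $f \cup \{u\} \in H_i$) to limit the damage to an additive $(i-1)$-type correction absorbed into the $3i-3$ term.
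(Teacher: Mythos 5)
Your high-level strategy matches the paper's (a case split on $|\bar e \cap \bar f|\in\{1,i-1\}$, bouquet arguments from Proposition~\ref{cl:bouquet}, and 4-cycles with clean intersections to exclude disjoint pairs), but there is a genuine gap, and it is exactly the one you flagged. The paper resolves it with an observation you do not have: Claim~\ref{clm:ffg} shows that if $e$ and $f$ are disjoint link edges and some edge $g$ meets both $\bar e$ and $\bar f$, then $g\in H_i$ and, for the index $j$ with $f_j\subseteq g$, one also gets $|C_{f_j}|=1$, i.e.\ $f_j$ lies in a \emph{singleton} component of $H_{i,x}$. In your sub-case~II this says $|C_f|=1$ for every such $f$, so the $f$'s are pairwise \emph{disjoint}, and then the 4-cycle argument (the paper's inequality~\eqref{eq:toy}) immediately gives at most one $f$ per vertex of $e$, hence at most $i-1$ in total. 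Without that singleton-component fact, your canonical-witness partition $F_{II}=\bigsqcup_u F_u$ does not help: the 4-cycle $(\bar f, f\cup\{u\}, f'\cup\{u\}, \bar f')$ has empty diagonals only when $f\cap f'=\emptyset$, so within a fixed $F_u$ two edges can still share $i-2$ vertices (a bouquet), and each $F_u$ could a priori have size $\Delta_{i-1}(H_i)$, giving $|F_{II}|\le (i-1)\Delta_{i-1}(H_i)$, far above the claimed bound. (One can also salvage $|F_u|\le 1$ without Claim~\ref{clm:ffg} by applying Property~v') to the triple $\bar f,\bar f',f'\cup\{u\}$ when $|f\cap f'|=i-2$, but this step is absent from your write-up.)

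A second, smaller issue: the singleton-component fact also makes the paper's ``same component'' contribution and your sub-case~I mutually exclusive (if $|C_e|\ge 2$ there is no $g\supseteq e$ witnessing sub-case~I, and if $|C_e|=1$ the same-component contribution is zero), which is why the paper's two sums each stay within $2\Gamma_i+i-3$. Your three contributions are counted independently and, even after fixing sub-case~II, would sum to roughly $2\Gamma_i+i-2$, one more than needed; the statement has slack so this is minor, but it is another place where the orientation/singleton structure of Claims~\ref{clm:ffg}--\ref{cl:orientation} is doing work that your argument does not replicate. Finally, the ``$z\notin e\cup f$ reduces to the previous case modulo a harmless technicality'' step is not harmless as stated: when $x$ lies in a diagonal edge $g_1$, the reduction lands on a relation between $f$ and some \emph{other} edge $f''\in C_e$, not $e$ itself, which is precisely why the paper's Claim~\ref{cl:orientation} is phrased with ``for some $f\in C_{f_{3-j}}$'' and why the counting uses $|V(C_e)|$ rather than $|e|$.
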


\begin{proof}
Throughout the proof we will denote the set of vertices of $D_{i,x}$ by the correspondent set of edges $H_{i,x}$. That is, the vertex $Y_e$ will be viewed as the edge $e \in H_{i,x}$. 
For every $e\in H_{i, x}$, let $C_e$ denote the connected components of $H_{i,x}$ that contains $e$. Since every component of $H_x$ is an intersecting family, any two edges 
$e,f$ in the same component are adjacent in $D_{i,x}$. We now study the edges of $D_{i,x}$ between different components.

\begin{claim}\label{clm:ffg}
Suppose $f_1, f_2 \in H_{i, x}$ are disjoint and there exists $g\in H$ such that $g\cap f_i\ne \emptyset$ for $i=1, 2$. Then 
$g\in H_i$ and there exists $j\in \{1, 2\}$ such that $f_j \subseteq g$ and $|g\cap f_{3-j} |=1$ (see Figure \ref{fig:fig5}). Furthermore, $|C_{f_j}|=1$.
\end{claim}

\begin{proof}
Since $f_1\cup\{x\}, f_2\cup\{x\}\notin H_2$, by Property iii) of $\bouquet$, the 3-cycle formed by $f_1\cup\{x\}, f_2\cup\{x\}$ and $g$ is not linear. Hence, either $|(f_1\cup\{x\})\cap g|>1$ or $|(f_2\cup\{x\})\cap g|>1$. By Properties i) and ii) of $\bouquet$, we obtain that $g\in H_i$ and $|(f_j\cup\{x\})\cap g|=i-1$ for some $j\in \{1, 2\}$.
Note that $f_1$ and $f_2$ are disjoint and thus in different components of $H_{i, x}$. Since $g$ has a nonempty intersection with both $f_1$ and $f_2$, it follows that $x\notin g$ (otherwise $f_1$ and $f_2$ are in the same component). 
Thus, $|(f_j\cup\{x\})\cap g|=i-1$ implies that $|f_j\cap g|=i-1$. Since $|f_j|=i-1$ and $g\cap f_{3-j}\ne \emptyset$, we obtain that $f_j\subseteq g$ and $|f_{3-j}\cap g|=1$. Furthermore, if $|C_{f_j}|>1$, then there exists another $f'\in H_{i, x}$ such that $|f'\cap f_j|= i - 1$. Then $|g\cap f'|=i-2$ and $|(f'\cup\{x\})\cap g|=i-2$. 
Since $i\ge 3$, $f'\cup\{x\}$, $f_j\cup\{x\}$ and $g$ form a 3-cycle that contradicts Property v') of $\bouquet$.
\end{proof}

\begin{figure}[h]
	\begin{tikzpicture}[scale=1]
		\coordinate (y) at (0,0);
		\coordinate (y1) at (1,-0.4);
		\coordinate (y2) at (1.5,-0.6);
		\coordinate (y3) at (2,-0.8);
		\coordinate (y4) at (2.5,-1);
		\coordinate (y5) at (3,-1.2);
		
		\coordinate (z1) at (1,0.4);
		\coordinate (z2) at (1.5,0.6);
		\coordinate (z3) at (2,0.8);
		\coordinate (z4) at (2.5,1);
		\coordinate (z5) at (3,1.2);
		
		\fill (y) circle (2pt);
		\fill (y1) circle (2pt);
		\fill (y2) circle (2pt);
		\fill (y3) circle (2pt);
		\fill (y4) circle (2pt);
		\fill (y5) circle (2pt);
	
		\fill (z1) circle (2pt);
		\fill (z2) circle (2pt);
		\fill (z3) circle (2pt);
		\fill (z4) circle (2pt);
		\fill (z5) circle (2pt);
		
		\redge{(y1)}{(y5)}{(y1)}{(y5)}{7.5pt}{1.5pt}{green!80!black}{green,opacity=0.2};
		\redge{(z1)}{(z5)}{(z1)}{(z5)}{7.5pt}{1.5pt}{green!80!black}{green,opacity=0.2};
		\redge{(z1)}{(y5)}{(y1)}{(z1)}{4pt}{1.5pt}{orange!80!black}{orange,opacity=0.2};

    \node (q1) at (0,-7.5pt) [below, font=\small] {$x$};
    \node (q2) at (3cm+7.5pt,-1.2) [above right, font=\small] {$f_1$};
    \node (q3) at (3cm+ 7.5pt,1.2) [above right, font=\small] {$f$};
    \node (q4) at (2.1,0) [right, font=\small] {$g$};
    
        \coordinate (u) at (6,0);
		\coordinate (u1) at (7,-0.4);
		\coordinate (u2) at (7.5,-0.6);
		\coordinate (u3) at (8,-0.8);
		\coordinate (u4) at (8.5,-1);
		\coordinate (u5) at (9,-1.2);
		
		\coordinate (v1) at (7,0.4);
		\coordinate (v2) at (7.5,0.6);
		\coordinate (v3) at (8,0.8);
		\coordinate (v4) at (8.5,1);
		\coordinate (v5) at (9,1.2);
		
		\fill (u) circle (2pt);
		\fill (u1) circle (2pt);
		\fill (u2) circle (2pt);
		\fill (u3) circle (2pt);
		\fill (u4) circle (2pt);
		\fill (u5) circle (2pt);

		\fill (v1) circle (2pt);
		\fill (v2) circle (2pt);
		\fill (v3) circle (2pt);
		\fill (v4) circle (2pt);
		\fill (v5) circle (2pt);
		
		\redge{(u1)}{(u5)}{(u1)}{(u5)}{7.5pt}{1.5pt}{green!80!black}{green,opacity=0.2};
		\redge{(v1)}{(v5)}{(v1)}{(v5)}{7.5pt}{1.5pt}{green!80!black}{green,opacity=0.2};
		\redge{(u1)}{(v1)}{(v5)}{(v5)}{4pt}{1.5pt}{orange!80!black}{orange,opacity=0.2};

    \node (q5) at (6,-7.5pt) [below, font=\small] {$x$};
    \node (q6) at (9cm+7.5pt,-1.2) [above right, font=\small] {$f$};
    \node (q7) at (9cm+ 7.5pt,1.2) [above right, font=\small] {$f_2$};
    \node (q8) at (8.1,0) [right, font=\small] {$g$};
    
	\end{tikzpicture}
	\caption{Two possible structures obtained from Claim~\ref{clm:ffg}}
	\label{fig:fig5}
\end{figure}
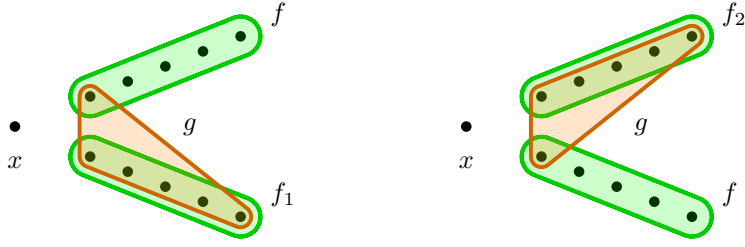

\begin{claim}\label{cl:orientation}
Suppose $f_1, f_2\in H_{i, x}$ are disjoint edges that are adjacent in $D_{i, x}$. Then there exists $g\in H_i$ and $j\in \{1, 2\}$ such that $f_j \subseteq g$ and $|g\cap f |=1$ for some $f \in C_{f_{3-j}}$. Furthermore, $|C_{f_j}|=1$.
\end{claim}

\begin{proof}
If there exists $g\in H$ such that $g\cap f_i\ne \emptyset$ for $i=1, 2$, then the conclusion immediately follows (with $f= f_{3-j}$) from Claim~\ref{clm:ffg}. We thus assume that no $g\in H$ such that $g\cap f_i\ne \emptyset$ for $i=1, 2$,

Since $f_1, f_2$ are adjacent in $D_{i, x}$, we have $(N(f_1)\cap N(f_2))\setminus\{x\}\neq\emptyset$. Since there is no edge in $H$ intersecting both $f_1$ and $f_2$, this implies that there exist $g_1,g_2\in H$ satisfying the following intersection relations:
\begin{align*}
f_1\cap g_1\neq \emptyset,\quad f_2\cap g_2\neq \emptyset,\quad g_1\cap g_2\neq \emptyset, \quad f_1\cap g_2=\emptyset \quad\text{and} \quad f_2\cap g_1=\emptyset.    
\end{align*}
If $x\notin g_1\cup g_2$, then $f_1\cup\{x\}, f_2\cup\{x\}, g_2$ and $g_1$ form a $4$-cycle with clean intersections, contradicting Property iv) of $\bouquet$. Otherwise, assume that $x\in g_1$ (the case when $x\in g_2$ is similar). Then $|g_1\cap (f_1\cup\{x\})|\geq 2$. By Properties i) of $\bouquet$ we obtain that $g_1\in H_i$ and $|g_1\cap f_1|= i-2$. Since $g_1\cap g_2\ne \emptyset$ and $g_2\cap f_1= \emptyset$, it follows that $|g_1\cap g_2|=1$. Let $f:=g_1\setminus\{x\}$. Then $f\in C_{f_1}$.
Applying Claim~\ref{clm:ffg} with $f, f_2$ and $g_2$, we obtain that $f_2\subseteq g_2$ and $|C_{f_2}|=1$ (we already know $|g_2\cap f|=1$), as desired. 
\end{proof}

For convenience, for two disjoint $f_1, f_2 \in H_{i, x}$, we write $f_1\mapsto f_2$ when there exists an edge $g\in H$ such that 
$f_1 \subseteq g$ and $|g\cap f_{2} |=1$. 
For $f_1 \in H_{i, x}$ and $y\in V(H)\setminus f_1$, we write $f_1\mapsto y$ if there exist $g\in H$ and $f_2 \in H_{i, x}$ disjoint from $f_1$ such that 
$f_1 \subseteq g$ and $\{y\}= g\cap f_{2}$.  We claim that for any set $Y\subseteq V(H)$,
\begin{align}
\label{eq:toy}
|\{f\in H_{i, x}: f\mapsto y \text{  for some  } y\in Y\}|\le |Y|. 
\end{align}
To see this, it suffices to show that no $y\in Y$ and two edges  $f_1, f_2 \in H_{i, x}$ such that $f_i\mapsto y$ for $i=1, 2$. This implies that there exist $g_1, g_2\in H$ such that $g_i = f_i \cup \{y\}$ for $i=1, 2$.  By Claim~\ref{clm:ffg}, $|C_{f_1}| = |C_{f_2}|=1$ and thus $f_1, f_2$ are disjoint. Hence $f_1\cup \{x\}, g_1, g_2, f_2\cup \{x\}$ form a 4-cycle with clean intersections, contradiction.

Now we are ready to prove Proposition~\ref{clm:edge_dep}. Fix $f_1\in H_{i, x}$. We first bound $|V(C_{f_1})|$, the number of vertices of $H$ in the component $C_{f_1}$. When $|C_{f_1}| = 1$, we have $|V(C_{f_1})| = i - 1$. When $|C_{f_1}| \ge 2$, 
Proposition~\ref{clm:BouStr} implies that
\begin{align}
\label{eq:VCf}
|V(C_{f_1})| \le \max\{i, |C_{f_1}| + i-2\}=  |C_{f_1}| + i-2.
\end{align}

By Claim~\ref{cl:orientation}, another $f_2\in H_{i, x}$ is adjacent to $f_1$ in $D_{i, x}$ if and only if one of the following holds:
\begin{enumerate}[(i)]
\item $f_2\in C_{f_1}$;
\item $f_1 \mapsto f'$ for some $f'\in C_{f_2}$ and $|C_{f_1}|=1$;
\item $f_2 \mapsto f'$ for some $f'\in C_{f_1}$ and $|C_{f_2}|=1$.
\end{enumerate}

First, assume that $|C_{f_1}|\ge 2$. In this case no $f_2$ satisfies (ii). By Proposition~\ref{clm:BouStr}, there are at most $|C_{f_1}|\le \Gamma_i:= \max\{i, \D_{i-1}(H_i) \}$ members of $H_{i, x}$ satisfying (i), including $f_1$ itself. By \eqref{eq:toy} and \eqref{eq:VCf}, there are at most 
\[
| V(C_{f_1}) |\le |C_{f_1}| + (i-2)\le \Gamma_i + i-2
\]
$f_2$ satisfying (iii). Since $\Gamma_i\le i+\D_{i-1}(H_i)$, it follows that
\[
\deg_{D_{i, x}}(f_1) \le \Gamma_i - 1 + \Gamma_i + i-2 \le 2\D_{i-1}(H_i) + 3i-3.
\]

Second, assume $|C_{f_1}|=1$. There is no other member in $C_{f_1}$ and by \eqref{eq:toy}, at most $|V(C_{f_1})|= i-1$ members of $H_{i, x}$ satisfying (iii). We claim that all $f_2$'s satisfying (ii) are in the same component of $H_{i, x}$, and thus there are at most $\Gamma_i$ such $f_2$'s.  Suppose this is not the case. Then there are $f', f''$ from two different component of $H_{i, x}$ such that $f_1\mapsto f'$ and $f_1\mapsto f''$. Note that $f_1, f', f''$ are pairwise disjoint. 
Then there exist $g', g''\in H$ such that $|g'\cap f'|=1$, $|g''\cap f''|=1$, and $g'\cap g''= f_1$. This implies that $f'\cup \{x\}$, $f''\cup \{x\}$, $g'', g$ form a 4-cycle  with clean intersections, contradiction. We thus obtain that 
\[
\deg_{D_{i, x}}(f_1) \le \Gamma_i  + i - 1 \le \D_{i-1}(H_i)  + 2i - 1.
\]
We thus have $\D(D_{i, x})\leq 2\D_{i-1}(H_i) + 3i- 3$, as desired.
\end{proof}

To obtain the concentration of $\deg_{2\to 2}(x)$, we apply Lemma~\ref{lem:AI} with random variables $\mathbbm{1}_{Y_e}$ for all $e\in H_{2, x}$.
Proposition \ref{clm:DDS} gives that $\D(D_{2, x})\le k-1$. With $s=k-1$, $p\ge  (1-\eps /4) e^{-1}$ from \eqref{eq:PYe}, and $n= |H_{2, x}| = (1+\eps)^m \binom{k-1}{k-2} \a^{k-2} t$ from \eqref{degrees}, Lemma~\ref{lem:AI} implies that
\begin{align}
&\mP\left(  |\deg_{2\to 2}(x) -  \mu_{2\to 2}(x) |> \frac{\eps}4  \mu_{2\to 2}(x)\, \middle|\, x\notin C\right) \nonumber \\
&\le 2k\exp \left(- \left(\frac{\eps}{4}\right)^2 \frac{(1-\frac\eps4)(1+\eps)^m \binom{k-1}{k-2}\a^{k-2} t/e}{3k} \right) \nonumber \\
&=e^{-\Omega\left(t/(\log t)^2\right)}\le \frac1{t^2}, \label{eq:d22}
\end{align} 
where we use the assumption $\a \geq 1$ and $\eps=1/\log t$. 

To obtain the concentration of $\deg_{i\to j}(x)$ for $i>j \ge 2$, we apply Lemma~\ref{lem:AI} with $s=2\D_{i-1}(H_i)+3i-3$ (due to Proposition~\ref{clm:edge_dep}), 
$p\ge  (1-\eps /4) \binom{i-1}{j-1} (\g / t)^{i-j} e^{-j+1}$ from \eqref{eq:PYe} and $n= | H_{i, x}| = (1+\eps)^m \binom{k-1}{k-i} \a^{k-i} t^{i-1}$ from \eqref{degrees}. Since $\eps=1/\log T\geq 1/(2\log t)$ and $2\D_i(H_{i-1})+3i-3\leq 2(1+\eps)^m\frac{t}{(\log t)^{i+1}}+3i-3\leq 4\frac{t}{(\log t)^{i+1}}$ we obtain that
\begin{align*}
&\mP \left( |\deg_{i\to j}(x) -  \mu_{i\to j}(x) | > \frac{\eps}4 \mu_{i\to j}(x) \, \middle| \, x\notin C\right) \\
&\leq 2 \left(2\D_{i-1}(H_i)+i-1\right) \exp \left( - \left(\frac{\eps}{4}\right)^2 \frac{(1-\frac\eps4) \binom{i-1}{j-1} \left({\g}/{t}\right)^{i-j} e^{-j+1}  (1+\eps)^m \binom{k-1}{k-i} \a^{k-i} t^{i-1} } {3\left( 2\D_{i-1}(H_i)+i-1\right)} \right) \\
&\le 8 \frac{t}{ (\log t)^{i+1} }\exp\left( - \frac{1}{64 (\log t)^2 } \frac{ \frac{1}{2}\binom{i-1}{j-1} \binom{k-1}{k-i}  \a^{k-i} \g^{i-j}\left(t/e\right)^{j-1} }{ 12{t}/{ (\log t)^{i+1}} } \right)\\
&\le t \exp\left( - c_1 \, \a^{k-i} \g^{i-j} t^{j-2} (\log t)^{i-1} \right)
\end{align*}
for sufficiently large $t$ and some constant $c_1>0$.
By the estimates of $\a$ and $\g$ from \eqref{eq:ag} and the fact
\[
k-i + (i-1)(k-1) - (i-j)(k-2) = j(k-2)+1 \ge 2k-3, 
\]
there is a constant $c_2>0$ such that 
\[
c_1 \, \a^{k-i} \g^{i-j} t^{j-2} (\log t)^{i-1} \ge c_2 \, (\log t)^{ \frac{k-i}{k-1} - (i-j)\frac{k-2}{k-1} + i-1}
\ge c_2\, (\log t)^{\frac{2k-3}{k-1}} \ge 3 \log t
\]
for sufficiently large $t$. It follows that 
\begin{align} \label{eq:Pdij}
\mP \left( |\deg_{i\to j}(x) -  \mu_{i\to j}(x) | > \frac{\eps}4 \mu_{i\to j}(x) \, \middle| \, x\notin C\right) 
&\leq t \exp \left( -3\log t\right) = \frac 1{t^2}.
\end{align}

Recall that for $2\leq j \leq i \leq k$, the set $Z_{i,j}$ is the  set of vertices $x \in V\setminus (N(B)\cup C)$ such that $\deg_{i\to j}(x) > (1+\eps/4)\mu_{i\to j}(x)$.
Using \eqref{eq:d22} or \eqref{eq:Pdij}, we obtain that
\begin{align*}
    \mP(x\in Z_{i,j} \mid x\notin C)\leq \frac{1}{t^2},
\end{align*}
for $2\leq j\leq i \leq k$. Since $x\in C$ implies that $x\notin Z$, we have that
\begin{align*}
    \mP(x \in Z_{i,j})&=\mP(x\in Z_{i,j}\mid x \notin C)\mP(x\notin C)+\mP(x\in Z_{i,j}\mid x \in C)\mP(x\in C)\\
    &=(1-p)\mP(x\in Z_{i,j}\mid x\notin C)< \mP(x\in Z_{i,j}\mid x\notin C) \leq \frac{1}{t^2}.
\end{align*}
Thus,
\begin{align}\label{eq:Zij}
\mE(|Z_{i,j}|)\leq n/t^2. 
\end{align}
By Markov's inequality, we have $\mP\left(|Z_{i,j}|>k^3\mE(|Z_{i,j}|) \right)\leq \frac1{k^3}$.
Since $Z=\bigcup_{2\leq j\leq i \leq k}Z_{i,j}$, by an union bound we derive that 
\begin{align*}
    &\mP\left(|Z|>\sum_{2\leq j \leq i \leq k}k^3\mE(|Z_{i,j}|)\right) \le \mP\left( \bigvee_{2\leq j \leq i \leq k} \Big(|Z_{i,j}| > k^3\mE(|Z_{i,j}|)\Big)  \right) \\
    &\leq \sum_{2\leq j \leq i \leq k}\mP\Big(|Z_{i,j}| > k^3\mE(|Z_{i,j}|)\Big) < \frac{1}{k}.
\end{align*}
Therefore, by \eqref{eq:Zij}, $|Z|\leq k^5n/t^2$ with probability at least $\frac{k-1}{k}$. 
In other words, with probability at least $\frac{k-1}{k}$, there is a set $Z$ of size 
\begin{align}
\label{eq:Z}
|Z|\le \frac{k^5 n}{t^2} \le \frac{\eps}3 \frac{\g n}{e t} 
\end{align}
such that all vertices $x\in V'\setminus (N(B)\cup Z)\subseteq V\setminus (N(B)\cup C \cup Z)$ satisfy $\deg_{i\to j}(x) \le (1+\eps/4) \mu_{i\to j}(x)$ for all $2\le j\le i\le k$. 

Recall that $H'$ is the contraction of the subgraph $H[V'\cup C]$ on the set of vertices $V'$, where we including multiple edges. For every $x\in V'\setminus (N(B)\cup Z)$, the above arguments give that
\begin{align*}
\deg_{H'_i}(x) &= \sum_{\ell=i}^k \deg_{\ell\to i}(x) \leq \sum_{\ell=i}^k\left(1+\frac{\eps}4\right)\mu_{\ell\to i} \\
&\le \left(1+\frac{\eps}4\right)^2 (1+\eps)^{m}\sum_{\ell=i}^k\binom{k-1}{k-\ell}\binom{\ell-1}{i-1}\a^{k-\ell}\g^{\ell-i}\left(\frac{t}{e}\right)^{i-1}.
\end{align*}
Since $\eps\ll 1$ and $\binom{k-1}{k-\ell}\binom{\ell-1}{i-1} = \binom{k-1}{k-i}\binom{k-i}{\ell-i}$, it follows that
\begin{align*}
\deg_{H'_i}(x) 
&\le (1+\eps)^{m+1} \left(\frac te \right)^{i-1}\sum_{\ell=i}^k\binom{k-1}{k-i}\binom{k-i}{\ell-i}\a^{k-\ell}\g^{\ell-i} \\
&= (1+\eps)^{m+1} \binom{k-1}{k-i} \left( \frac te \right)^{i-1}\sum_{\ell=0}^{k-i} \binom{k-i}{\ell}\a^{k-i-\ell}\g^\ell \\
&=(1+\eps)^{m+1} \binom{k-1}{k-i} (\a+\g)^{k-i} \left( \frac te \right)^{i-1}. 
\end{align*}
Since $\a+\g = \a_{m+1}$, this give the desired bound 
\begin{align}
\label{eq:di}
\deg_{H'_i}(x) \le (1+\eps)^{m+1} \binom{k-1}{k-i} \a_{m+1}^{k-i} \left( \frac te \right)^{i-1}.
\end{align}

\vspace{0.1cm}

\subsection{Step 5}
Recall that $H'$ is a hypergraph with $V(H')=V'$ and $E(H')$ is the multiset of $e \cap V'$ for all $e \in H[V' \cup C]$. We first extend the definition of $\deg_{H'}(S)$ to any set $S\subseteq V$ of size at most $k$ such $\deg_{H'}(S)= \{e\setminus S: e\in H, e\setminus S \subseteq V'\cup C\}$.
In this step we show that \emph{w.h.p for every $P\subseteq V'\setminus N(B)$ of size $2\le |P|<k$},
\begin{align}\label{eq:dH'S}
\deg_{H'}(P)\leq (1+\eps)^{m+1}\frac{t/e}{(\log(t/e))^{|P|+2}}.
\end{align}

Fix a set $P \subseteq V\setminus N(B)$ of size $2\le |P|<k$ and consider the random space where we condition on $P\cap C=\emptyset$. By Property ii) of $\bouquet$, all the edges of $H$ that contain $P$ must have the same size $r$. If $|P|=r$, then $\deg_{H'}(P)\leq \deg_H(P)=1$ and we are done. We thus assume that $r\ge |P|+1\ge 3$.
If $\deg_H(P) \le (1+\eps)^{m+1}\frac{t/e}{(\log (t/e))^{r+1}}$, then \eqref{eq:dH'S} holds because $|P|+1\le r$ and $\deg_{H'}(P)\leq \deg_H(P)$. Thus we may assume that 
\begin{align}\label{eq:dS}
\deg_H(P) > (1+\eps)^{m+1}\frac{t/e}{(\log (t/e))^{r+1}}> k+1. 
\end{align}

By Proposition \ref{cl:bouquet}, there exists a set $K\subseteq V$ of size $\ell-1$, with $P\subseteq K$, such that every edge containing $P$ contains $K$.
Let $V_K=\{x \in V:\: K\cup\{x\} \in H_r \}$. Then $\deg_H(P)=|V_K|$.

\begin{claim}\label{cl:degP}
$\deg_{H'}(P)\leq |V_K\setminus (D\setminus C)|$.
\end{claim}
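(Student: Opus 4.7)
My plan is to unwind the contraction that defines $H'$ and exhibit an injection from the edges of $H'$ through $P$ into the set $V_K\setminus (D\setminus C)$. Recall that $H'$ is obtained from $H[V'\cup C]$ by contracting onto $V'$ and keeping multiplicities. Since $P\subseteq V'$, an edge of the multiset $H'$ whose trace on $V'$ contains $P$ lifts uniquely to an $e\in H$ with $P\subseteq e$ and $e\subseteq V'\cup C$. By the structural work just performed before the claim, every such $e$ has size $r$ and contains the common $(r-1)$-set $K\supseteq P$, so $e=K\cup\{x\}$ for a unique $x\in V_K$. The map $e\mapsto x$ is the injection I want, and all that remains is to pin down which $x$'s can actually occur.

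For this step I plan to use the set identity $V'\cup C=(V\setminus B)\setminus (D\setminus C)$, which is immediate from $V'=V\setminus(B\cup C\cup D)$. The containment $K\cup\{x\}\subseteq V'\cup C$ then forces $x\in(V\setminus B)\setminus(D\setminus C)$. The only piece of structure I still need is $V_K\subseteq V\setminus B$, which I will extract from the hypothesis $P\subseteq V'\setminus N(B)$: if some $y\in K\cup\{x\}\in H$ were in $B$, then every $z\in P$ would satisfy $z\in N(y)\subseteq N(B)$, contradicting $P\cap N(B)=\emptyset$.

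Putting the two ingredients together, the image of the injection $e\mapsto x$ lies inside $V_K\setminus (D\setminus C)$, giving $\deg_{H'}(P)\le |V_K\setminus (D\setminus C)|$. I do not foresee any real obstacle here: the entire argument is set-chasing once one has (i) the rigid shape $e=K\cup\{x\}$ coming from Proposition \ref{cl:bouquet} (already established via \eqref{eq:dS} so that the $K^i_{i+1}$ alternative is ruled out by $\deg_H(P)>k+1$), and (ii) the observation that $P\cap N(B)=\emptyset$ forbids any edge through $P$ from touching $B$.
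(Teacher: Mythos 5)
Your argument is essentially the same as the paper's: edges of $H$ through $P$ have the rigid form $K\cup\{x\}$ with $x\in V_K$, and the ones surviving the contraction correspond to $x\in V'\cup C$, which lies in $V_K\setminus(D\setminus C)$. One minor slip: the claimed identity $V'\cup C=(V\setminus B)\setminus(D\setminus C)$ fails on $B\cap C$, but this is harmless since all that is actually needed is the one-sided containment $V'\cup C\subseteq V\setminus(D\setminus C)$ (immediate from $V'\cap D=\emptyset$ and $C\cap(D\setminus C)=\emptyset$); the paper uses exactly this and never invokes $V_K\subseteq V\setminus B$.
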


\begin{proof}
First assume that $K\setminus P \not \subseteq V'\cup C$. Then, by the definition of $H'$, all edges containing $K\setminus P$ will be discarded in $H'$. Since every edge of $H$ containing $P$ contains $K$,
it follows that $\deg_{H'}(P)=0$ and the claim holds trivially. Now assume that $K\setminus P\subseteq V'\cup C$. Thus, by definitions of $\deg_{H'}(P)$ and $V'=V\setminus (B\cup C\cup D)$, we have
\begin{align*}
    \deg_{H'}(P)=\deg_{H'}(K)=|\{x \in V_K:\: x \in V'\cup C\}|\leq |V_K\setminus (D\setminus C)|.
    &\qedhere
\end{align*}
\end{proof}

For $x\in V_K$, let $E_x$ be the event $\{x\notin D\setminus C \mid P\cap C=\emptyset\}$.
Let $H_x^{K}=\{f \in H_x:\: f \cap K\neq \emptyset\}$ be the set of edges of $H_x$ that intersects with $K$. We claim that $H_x^K=\{K\}$. Indeed, suppose $e\in H_x\setminus \{K\}$ satisfies $e\cap K\ne \emptyset$. 
Then $|(e\cup \{x\}) \cap (K\cup \{x\}) |\ge 2$ and Properties i) and ii) of $\bouquet$ imply that $|e|=|K|=r-1$ and $|e\cap K|=r -2$.  By \eqref{eq:dS}, we have $|V_K|>k+1$  and consequently, there exists $y\in V_K\setminus e$. Hence $|(e\cup \{x\}) \cap (K\cup \{y\}) |=r - 2$ and the edges $e\cup \{x\}$, $K\cup \{x\}$, and $K\cup \{y\}$ contradicts Property v') of $\bouquet$.

As in Steps $3$ and $4$, we want to compute the expected size of $\deg_{H'}(P)$ conditioned on $P\cap C=\emptyset$. Let $\tH_x=H_x\setminus H_x^K$ be the set of $e \in H_x$ such that $e\cap K=\emptyset$. Since $P\cap C=\emptyset$ implies that $K \not\subseteq C$ and $e\in \tH_x$ implies that $e\cap K=\emptyset$, we obtain that
\begin{align}\label{eq:PEx2}
    \mP(E_x)&=\mP(x\in C)+\mP(x\notin C)\mP(x\notin D \mid P\cap C=\emptyset) \nonumber\\
    &= p+(1-p)\mP\left(\bigwedge_{e\in H_x}\{e\not\subseteq C\}\, \middle| \,P\cap C=\emptyset\right) \nonumber\\
    &= p+(1-p)\mP\left(\bigwedge_{e\in \tH_x}\{e\not\subseteq C\}\right).
\end{align}

We have $|H_x\setminus \tH_x|=1=O\left(\frac{t}{(\log t)^2}\right)$, and $x\notin B$ because $x\in V_K$ and $P\cap N(B)=\emptyset$. Thus, we can apply Proposition \ref{clm:tH} and obtain that
\begin{align*} 
\mP(E_x) &= p+(1-p)e^{-1+O(1/(\log t)^2)}\le \left(1+\frac{\eps}{4}\right)e^{-1}.
\end{align*}

Let $X$ be the random variable $|V_K\setminus (D\setminus C)|$. By Claim~\ref{cl:degP}, we have
\begin{align}\label{eq:dSX}
\deg_{H'}(P)\le X.
\end{align}
Since $|V_K|= \deg_{H}(K)\le (1+\eps)^m t/ (\log t)^{r+1}$, we obtain that
\begin{align}\label{eq:EXS}
\mE(\deg_{H'}(P)\mid P\cap C=\emptyset)&\le \mE(X\mid P\cap C=\emptyset)=\sum_{x\in V_K} \mP( E_x ) \nonumber\\
&\le \left(1 + \frac{\eps}4 \right)\frac{|V_K|}e \le \left(1+ \frac{\eps}4 \right) (1+\eps)^m \frac{t/e}{ (\log t)^{r+1}}.
\end{align}

Now we consider the concentration of $X$. We claim that \emph{given $x\in V_K$ and $Q\subseteq V_K$,
the event $E_x$ is mutually independent of $\{E_y: y\in Q\}$ if $\left(N(x)\cap \left(\bigcup_{y\in Q} N(y) \right) \right)\setminus K=\emptyset$.}
Indeed, by \eqref{eq:PEx2}, $E_x$ depends on whether $z\in C$ for $z\in \bigcup_{f\in \tH_x} f$. Note that $f\cap K=\emptyset$ for every $f\in \tH_x$. Thus, $E_x$ depends on the event $z\in C$ for $z\in N(x)\setminus K$. Consequently, $E_x$ is mutually independent of $\{E_y: y\in Q\}$ if $N(x)\setminus K$ is disjoint from $\bigcup_{y\in Q} N(y)\setminus K$.

Recall that $D_K$ defined in Step $4$ is a graph on $V_K$ such that $x\in V_K$ is adjacent to $y\in V_K$ if and only if $(N(x)\cap N(y))\setminus K\ne \emptyset$. Thus the dependency graph for the events $\{E_x\}_{x \in V_K}$ is a subgraph of $D_K$. In order to apply the Almost Independent Lemma, it suffices to bound $\D(D_K)$. By Proposition \ref{clm:DDS} we have that $\D(D_P)\leq k-1$. Therefore, Lemma \ref{lem:AI} with $\{\mathbbm{1}_{E_x}\}_{x\in V_K}$, $n=|V_K|$, $p= (1-\eps/4)/e$, $s=k-1$,  and $\eps/4$ gives us that
\begin{align*}
    \mP\left(|X-\mE(X)|\geq \frac{\eps}4 \mE(X)\, \middle|\, P\cap C=\emptyset\right)  \leq 2k\exp \left( -\frac{(\eps/4)^2  (1- \eps/4)|V_K|} {3ke}\right). 
\end{align*}
Since $|V_K|= \deg_H(P)\ge (1+\eps)^{m+1} (t/e) / (\log (t/e))^{r+1}$ by \eqref{eq:dS}, it follows that 
\begin{align*}
    \mP\left(|X-\mE(X)|\geq \frac{\eps}4 \mE(X)\, \middle|\, P\cap C=\emptyset\right)
    & \leq 2k\exp \left(-\frac{\eps^2  (1- \eps/4) (1+\eps)^{m+1} t}{48 e^2k (\log(t/e))^{r+1}}\right) \\
    = 2k e^{- \Omega( t/ (\log t)^{r+3} ) }
\end{align*}
because $\eps \ge 1/ (2\log t)$. By \eqref{eq:tlogn}, we have $t \geq (\log n)^{3/2}$, which implies that 
$t/ (\log t)^{r+3}$ $\gg k\log n $. Consequently,
\begin{align}\label{eq:step5conditioned}
\mP\left(|X-\mE(X)|\geq \frac{\eps}4 \mE(X)\, \middle|\, P\cap C=\emptyset\right) =o(n^{-k}).
\end{align}
Let $\mathcal{P}$ be the set of $P\subseteq V\setminus (N(B)\cup C)$ satisfying (\ref{eq:dS}) and $2\leq |P|< k$ such that 
\begin{align*}
    \deg_{H'}(P)>\left(1+\frac{\eps}{4}\right)\mE(X \mid P\cap C=\emptyset).
\end{align*}
Therefore, by (\ref{eq:step5conditioned}), 
\begin{align*}
    \mP(P\in \mathcal{P})\le \mP(P\cap C=\emptyset)\mP\left(X \geq \left(1+\frac{\eps}4\right) \mE(X)\, \middle|\, P\cap C=\emptyset\right)=o(n^{-k}).
\end{align*}
Thus, by Markov's inequality, we have $\mP(|\mathcal{P}|>0)=o(1)$. Hence, with probability $1- o(1)$, for all subsets $P \subseteq V'\subseteq V\setminus(N(B)\cup C)$ with $2\leq |P|<k$, we have $X\le (1+\eps/4)\mE(X)$. By \eqref{eq:dSX} and  \eqref{eq:EXS}, it follows that 
\begin{align*}
\deg_{H'}(P) & \le (1+\eps/4)\mE(X) \leq \left(1+ \frac{\eps}4 \right)^2 (1+\eps)^m \frac{t/e}{ (\log t)^{r+1} } \\
& \le (1+\eps)^{m+1} \frac{t/e}{(\log t)^{r+1}} \le (1+\eps)^{m+1}\frac{(t/e)}{(\log(t/e))^{|P|+2}}.   
\end{align*}

\subsection{Completing the proof}
As stated earlier, Lemma \ref{lem:key} i) comes from Lemma \ref{lem:structural} and the assumption that $H=H^m \in \bouquet$. To check Lemma \ref{lem:key} ii)--v), we choose a set $C=C_{m+1}$ such that \eqref{eq:V'2}, \eqref{eq:I'}, and \eqref{eq:Z} hold, 
and \eqref{eq:dH'S} holds for all $P\subseteq V'$ of size $2\le |P|<k$. Indeed, we showed that \eqref{eq:V'2} and \eqref{eq:I'} holds with probability $1-o(1)$,
\eqref{eq:Z} holds with probability $\frac{k-1}{k}$, and \eqref{eq:dH'S} holds with probability $1-o(1)$. Therefore, with positive probability, our desired $C$ exists.

Recall that $W=W_{m+1}=N(B)\cup Z$, $V^*=V_{m+1}=V'\setminus W$, and $I=I_{m+1}=I'\setminus W = C\setminus (W\cup D)$.
By \eqref{eq:tn} and \eqref{eq:NB}, we have  $| N(B) |\le \frac{\eps \g n}{3 e t}$;
by \eqref{eq:Z}, we have $|Z|\leq \frac{\eps \g n}{3et}$.
Therefore, \eqref{eq:V'2} and \eqref{eq:I'} imply that
\[
(1-\eps)\frac{n}{e} \le |V'| - |Z| - |N(B)|\le |V^*|\le |V'|\le (1+\eps) \frac{n}{e}, 
\] 
and $|I|\geq |I'|-|Z|-|N(B)|\geq (1-\eps)\frac{\g n}{e t}$. These give Lemma~\ref{lem:key} ii) and iii).

By the AKPSS algorithm, the hypergraph $H^*=H^{m+1}= (H^*_2, \dots, H^*_k)$ is the hypergraph obtained after removing duplicated edges and edges that properly contain other edges from $H[V^*\cup I]\mid_{V^*}$. Note that $H^*$ can be obtained from $H'$ by considering the subhypergraph of $H'[V^*]$ where we remove all the multiple edges, edges that properly contain another edge and the edges that intersect $D$ and $W$ in $H$ before contraction. Thus, $H^*$ is a subhypergraph of $H'[V^*]$ and consequently, $\deg_{H_i^*}(P)\le \deg_{H'_i}(P)$ for any $P$ of size $1\le |P|<k$. By \eqref{eq:di}, for every $x\in V^*$ and $2\le i\le k$, we have
\begin{align*}
    \deg_{H_i^*}(x)\leq \deg_{H'_i}(x)\le (1+\eps)^{m+1} \binom{k-1}{k-i} \a_{m+1}^{k-i} \left( \frac te \right)^{i-1}, 
\end{align*}
which gives Lemma~\ref{lem:key} v). Finally, by \eqref{eq:dH'S},  every 
$P\subseteq V^*$ with $|P|=i-1$ and $3\leq i \leq k$ satisfies
\[ 
\deg_{H_i^*}(P)\leq \deg_{H'_i}(P) \leq (1+\eps)^{m+1}\frac{(t/e)}{(\log(t/e))^{i+1}},
\] 
which gives Lemma~\ref{lem:key} vi). This concludes our proof.

\section*{Acknowledgment}
The authors thank two anonymous referees for their helpful comments.


\section{Appendix A: Proof of Theorem~\ref{th:application} }
\label{sec:P12}

In this section we prove Theorem~\ref{th:application}. The main idea is using vertex sampling to remove most cycles of length $2, 3, 4$ such that we can apply Theorem~\ref{thm:main}.

\begin{proof}[Proof of Theorem~\ref{th:application}]
To handle both Cases $(1)$ and $(2)$ simultaneously, we assume that $H$ is a $k$-graph on $n$ vertices with $\D_{k-2}(H)\leq dn$ for most of the proof, and only work on two cases separately at the end. In either case, if $n/d$ is bounded by a constant, then $\a(H)=\Omega\left(\left(\frac{n}{d}\log \frac{n}{d}\right)^{\frac{1}{k-1}}\right)$ follows from (\ref{Spencer}). Therefore, we may assume that $n/d$ is sufficiently large from now on.

Let $H_{k-1}$ consists of all $(k-1)$-tuples $S\in \binom{V(H)}{k-1}$ such that $\deg_H(S)\ge n^{\frac{k-2}{k-1}}d^{\frac{1}{k-1}}$. 
Consider $T\in  \binom{V(H)}{\ell}$, where $0 \le \ell \le k-2$. By the definition of $H_{k-1}$, we have
\[
\deg_H(T)=\frac{\sum_{S\supseteq T, |S|=k-1}\deg_H(S)}{k-\ell}\geq \frac{\deg_{H_{k-1}}(T) \, n^{\frac{k-2}{k-1}}d^\frac{1}{k-1}}{k-\ell}.
\]
Together with \eqref{eq:degH}, this gives that
\[
\deg_{H_{k-1}}(T) \, n^{\frac{k-2}{k-1}}d^{\frac{1}{k-1}} \le (k-\ell)\D_\ell(H) \leq 2 n^{k-\ell-1} d.
\]
Consequently, for $0\leq \ell \leq k-2$, we have
\begin{align}\label{eq:DlHk}
    \D_\ell(H_{k-1}) \le 2 d^{\frac{k-2}{k-1}} n^{k-\ell-1-\frac{k-2}{k-1}}.
\end{align}

Let $H_k$ be the subhypergraph of $H$ consisting of all edges that do not contain $(k-1)$-tuples from $H_{k-1}$. Since $H_k\subseteq H$, we obtain that, for $0\leq \ell \leq k-2$,
\begin{align}\label{eq:DlHk2}
    \Delta_\ell(H_k)\leq \Delta_\ell(H)\leq \frac{2}{(k-\ell)!} n^{k-\ell-1} d \leq n^{k-\ell-1}d.
\end{align}

Let $\cH=\{H_{k-1},H_k\}$ be the non-uniform hypergraph with edges of uniformity $(k-1)$ and $k$. Our goal is to find a subset $U''\subseteq V(\cH)$ such that the subhypergraph $\cH[U'']$ induced on $U''$ satisfies the assumption of 
Theorem \ref{thm:main} and consequently, contains a large independent set.  In order to do that, we use the deletion method to remove most cycles of size $2$, $3$ and $4$ from $\cH$. 
Given $\d >0$, let $U\subseteq V(H)$ be a random subset whose vertices are chosen independently, each one with probability 
\[
p= n^{\delta - \frac{k-2}{k-1}} d^{-\delta - \frac1{k-1}}.
\] 
Hence
\begin{equation}
\label{eq:pn}
p n = \left( \frac nd \right)^{\frac1{k-1} + \delta}.
\end{equation}

Since $|U|$ follows the binomial distribution $B(n, p)$, by Chernoff's bound, we have
\begin{align}\label{eq:|U|2}
\mP\left( \left| |U| - pn \right| \ge \frac{pn}{10}  \right) < e^{-c pn} < \frac1{8}
\end{align}
for some constant $c>0$ and sufficiently large $n$. In order to apply Theorem~\ref{thm:main}, we need to bound the maximum vertex degree in $H_k$ and $H_{k-1}$. For $k-1\leq i \leq k$, let $Z_i$ be the set of vertices $x \in U$ such that 
$\deg_{H_i[U]}(x) > 40 p^{i-1} \D_{1}(H_i)$.
Note that
\begin{align*}
    \mE(|H_i[U]|)=p^{i}|H_i|\leq \frac{p^in\D_1(H_i)}{i}
\end{align*}
for $k-1\leq i \leq k$. By Markov's inequality, we obtain that
\begin{align*}
    \mP\Big(\{|H_{k-1}(U)| \ge 4\mE(|H_{k-1}(U)|)\}\vee \{|H_{k}(U)|\ge 4\mE(|H_{k}(U)|)\}\Big)\leq \frac{1}{2}.
\end{align*}

If $|H_i[U]|< 4\mE(H_i[U])$, then, since $\sum_{x\in U}\deg_{H_i[U]}(x)=i|H_i(U)|$, we obtain that
\begin{align*}
    |Z_i|<\frac{i|H_i(U)|}{40p^{i-1}\D_1(H_i)}< \frac{4p^{i}n\D_1(H_i)}{40p^{i-1}\D_1(H_i)}=\frac{pn}{10},
\end{align*}
which implies that $|Z_{k-1}\cup Z_k|< \frac{pn}{5}$.
Thus, we have
\begin{align}
\label{eq:Zk-1}
    \mP\left(|Z_{k-1}\cup Z_k|\geq \frac{pn}{5} \right)\leq \frac{1}{2}.
\end{align}
For any $x\in U\setminus (Z_{k-1}\cup Z_k)$, we have 
\[
\deg_{H_i[U]}(x) \le 40 p^{i-1} \D_1(H_i)
\]
for $k-1\leq i \leq k$. Let $U' = U\setminus (Z_{k-1}\cup Z_k)$. Applying \eqref{eq:DlHk}--\eqref{eq:pn}, we derive that
\begin{align}
\label{eq:HU''}
    \D_1(H_k[U']) &\leq 40p^{k-1}\D_1(H_k)\le 40 (p n)^{k-1}/ \left(\frac nd\right) =  40\left(\frac{n}{d}\right)^{(k-1)\delta} \quad \text{and}\\
    \D_1(H_{k-1}[U'])&\leq 40p^{k-2}\D_1(H_{k-1})\le 80  (p n)^{k-2}/ \left(\frac nd\right)^{\frac{k-2}{k-1}} = 80\left(\frac{n}{d}\right)^{(k-2)\delta}. \label{eq:H1U''}
\end{align}

For $2\leq \ell\leq k-1$, recall that
a $(2, \ell)$-cycle is a 2-cycle with
two edges that shares exactly $\ell$ vertices, and $C_{\cH[U]}(2,\ell)$ is the number of $(2,\ell)$-cycles of $\cH[U]$, Note that, since $\cH[U]=\{H_{k-1}[U],H_k[U]\}$, a $(2,\ell)$-cycle may contain two edges of different sizes. To estimate $\mE(C_{\cH[U]}(2,\ell))$ for $2\leq \ell\le k-2$,  we apply \eqref{eq:DlHk} and \eqref{eq:DlHk2} obtaining that  
\begin{align*}
\left| \{ e, f\in H_k: |e\cap f|= \ell \}\right| & \le n^{\ell} (\D_{\ell}(H_k) )^2 \le n^{\ell} ( d n^{k-\ell-1} )^2 = n^{2k- \ell} / \left( \frac nd \right)^2, \\
\left| \{ e\in H_k, f\in H_{k-1}: |e\cap f|= \ell \}\right| & \le n^{\ell} \D_{\ell}(H_k) \D_{\ell}(H_{k-1}) \le 2 n^{2k-\ell-1} / \left(\frac{n}{d}
\right)^{\frac{2k-3}{k-1}}, \\
\left| \{ e, f\in H_{k-1}: |e\cap f|= \ell \}\right| & \le n^{\ell} (\D_{\ell}(H_{k-1}) )^2 \le 4 n^{2k-\ell-2} / \left(\frac{n}{d}
\right)^{\frac{2k-4}{k-1}}.
\end{align*}

Together with \eqref{eq:pn}, these give that
\begin{align*}
\mE(C_{\cH[U]}(2,\ell))&=\sum_{\substack{e,f\in H_k\\|e\cap f|=\ell}}p^{2k-\ell} + \sum_{\substack{e\in H_k, f\in H_{k-1}\\|e\cap f|=\ell}}p^{2k-l-1}+\sum_{\substack{e,f\in H_{k-1}\\|e\cap f|=\ell}}p^{2k-l-2} \\
&=\left(\frac{n}{d}\right)^{\frac{2-\ell}{k-1}+(2k-\ell)\delta} + 2\left(\frac{n}{d}\right)^{\frac{2-\ell}{k-1}+(2k-\ell - 1)\delta} + 4\left(\frac{n}{d}\right)^{\frac{2-\ell}{k-1}+(2k-\ell-2)\delta}\\
&=O\left(\left(\frac{n}{d}\right)^{\frac{2-\ell}{k-1}+(2k-\ell)\delta}\right) = O\left(\left(\frac{n}{d}\right)^{(2k-2)\delta}\right) 
\end{align*}
for $2\le \ell\le k-2$.

Now let $X_3$ be the random variable counting the number of linear 3-cycles in $\cH[U]$. 
We can bound the number of \emph{all} 3-cycles by first picking three vertices and choosing three edges from $H_k$ or $H_{k-1}$ containing three pairs of these three vertices. For example, there are at most $\binom n3 (\D_2(H_k))^3$ 3-cycles in $H_k$, while there are at most $\binom n3 (\D_2(H_{k}))^2\D_2(H_{k-1})$ 3-cycles with two edges in $H_k$ and one in $H_{k-1}$. 
Each linear 3-cycle in $\cH$ has $3k-3-i$ vertices, where $i$ is the number of the edges of $H_{k-1}$ in the cycle. For example, a linear 3-cycle in $H_k$ has $3k-3$ vertices while a linear 3-cycle with two edges in $H_k$ and one in $H_{k-1}$ has $3k-4$ vertices.
Therefore, applying \eqref{eq:DlHk2} and \eqref{eq:pn}, the expected number of linear 3-cycles in $H_k$ is at most
\[
\binom n3 (\D_2(H))^3 p^{3k-3} \le \left( n ( n^{k-3} d) p^{k-1}\right)^3 = \left(\frac{n}{d}\right)^{3(k-1)\delta}
\]
and the expected number of 3-cycles with two edges in $H_k$ and one in $H_{k-1}$ is at most
\[
\binom n3 (\D_2 (H_{k}) )^2\D_2(H_{k-1}) p^{3k-4} \le \frac13  (p\,n) ^{3k-4} /  \left(\frac{n}{d}\right)^{2+\frac{k-2}{k-1}} = \frac13 \left(\frac{n}{d}\right)^{(3k-4)\delta}.
\]
The expected numbers of other kinds of 3-cycles follow similar bounds. Putting these together, we conclude that $\mE(X_3)= O\left(\left(\frac{n}{d}\right)^{3(k-1)\delta} \right) $.

Let $X_4$ be the random variable counting the number of linear 4-cycles in $\cH[U]$ with clean intersections. 
Note that if $e_1, e_2, e_3, e_4\in \cH$ form a linear 4-cycle, it is possible to have $e_1\cap e_3\ne \emptyset$ or $e_2\cap e_4\ne \emptyset$. However, if $\{e_1, e_2, e_3, e_4\}$ is a linear 4-cycle with clean intersections, then 
$e_1\cap e_3= \emptyset = e_2\cap e_4$, which gives
the precise structure of this 4-cycle. In particular, the number of vertices in this 4-cycle is $4k-4-i$, where $i$ is the number of the edges of $H_{k-1}$ in this cycle. Following similar computations as the case of linear 3-cycles, we obtain that $\mE(X_4) = O\left( \left(\frac{n}{d}\right)^{4(k-1)\delta} \right)$.

We now consider two cases of Theorem~\ref{th:application} separately.

\vspace{0.2cm}

\noindent\textbf{Case (1):} \emph{there exists $\eps>0$ such that $\deg_H(S) \not\in \left( n^{ \frac{k-2}{k-1}-\eps} d^{ \frac{1}{k-1}+\eps}, n^{ \frac{k-2}{k-1} } d^{\frac{1}{k-1} }  \right)$ for every $S \in \binom{V(H)}{k-1}$.}

\vspace{0.3cm}

Recall that $H_k$ consists of all edges of $H$ that contain no $(k-1)$-set $S$ such that $\deg_H(S)\ge n^{ \frac{k-2}{k-1} } d^{\frac{1}{k-1} }$. Case (1) thus implies that all $(k-1)$-subsets $S$ of $V(H)$ satisfy either $S\in H_{k-1}$ or $\deg_H(S)\le  n^{\frac{k-2}{k-1}-\eps}d^{\frac{1}{k-1}+\eps}$. Consequently, 
\begin{align}\label{Dk-1Hk}
\D_{k-1}(H_k)\leq n^{\frac{k-2}{k-1}-\eps}d^{\frac{1}{k-1}+\eps}. 
\end{align}

Since $n^{ \frac{k-2}{k-1}-\eps} d^{ \frac{1}{k-1}+\eps}$ decreases as $\eps$ increases, we may assume, without loss of generality, that $\eps\leq 1/(4k)$. 
We apply the sampling described previously with $\delta=\frac{\epsilon}{k+1}\le \frac{1}{4k(k+1)}$.
We also assume that $n/d\ge n_0(\delta)$, where $n_0(\delta)$ is a large number depending on $\delta$.
Recall that $C_{\cH[U]}(2,k-1)$ is the number of $2$-cycles with intersection of size $k-1$ in $\cH[U]$.
In view of $|H_k| = \D_0(H_K) \le \frac{2}{k!}n^{k-1} d$ from \eqref{eq:DlHk2}, we apply \eqref{Dk-1Hk} and \eqref{eq:pn} obtaining that
\begin{align*}
    \mE(C_{\cH[U]}(2,k-1)) &=\sum_{\substack{e,f\in H_k, |e\cap f|=k-1}}p^{k+1}\le \frac{k}{2}|H_k|\D_{k-1}(H_k)p^{k+1}
    \le (pn)^{k+1} /  \left(\frac{n}{d}\right)^{\frac{k}{k-1} + \eps} \\
    &= \left(\frac{n}{d}\right)^{\frac{1}{k-1}+(k+1)\delta-\eps} =  \left(\frac{n}{d}\right)^{\frac{1}{k-1}}.
\end{align*}
Let $X = \sum_{\ell=2}^{k-1} C_{\cH[U]}(2,\ell) + X_3 + X_4$. Combining with the bounds on $\mE(X_3)$ and $\mE(X_4)$ obtained earlier, we derive that 
\begin{align}\label{eq:X}
    \mE(X) = \left(\frac{n}{d}\right)^{\frac{1}{k-1}}+ O\left(\left(\frac{n}{d}\right)^{3(k-1)\delta} \right) +
    O\left(\left(\frac{n}{d}\right)^{(4k-4)\delta}\right) 
    \ll \left(\frac{n}{d}\right)^{\frac 1{k-1}+\delta} = pn 
\end{align}
by using $(4k-4)\delta < \frac{1}{k-1}$.
Applying Markov's inequality, we have 
\begin{align}\label{eq:PX}
\mP( X \ge 4 \mE(X) )\le 1/4.
\end{align}

Recall that $U'= U\setminus (Z_{k-1}\cup Z_k)$.
Let $U''$ be the subset of $U'$ obtained by removing (at least) one vertex from each of 2-cycles, linear 3-cycles, and linear 4-cycles with clean intersections. We claim this actually remove all cycles of length $2, 3, 4$ in $\cH[U']$. Indeed, if $\C$ is a cycle of length $3$ or $4$, then either $\C$ is linear or $\C$ contains a 2-cycle. If $\C=\{e_1, e_2, e_3, e_3\}$ is a linear 4-cycle, then either $\C$ has clean intersections or say, $e_1\cap e_3\ne \emptyset$, which implies that $e_1, e_2, e_3$ form a linear 3-cycle. In all cases, one vertex of $\C$ will be removed. 
Thus $\cH[U'']$ has no cycles of length $2$, $3$ and $4$ and consequently has the $\bouquet$ properties.

By \eqref{eq:|U|2}, \eqref{eq:Zk-1}, \eqref{eq:X}, and \eqref{eq:PX}, we have
\begin{align}\label{eq:U''}
|U''|\geq |U|-\frac{pn}{5} - 4\,\mE(X) \ge \frac7{10} pn- 4\,\mE(X) \ge 
\frac{pn}2=\frac{1}{2}\left(\frac{n}{d}\right)^{\frac 1{k-1}+\delta}
\end{align}
with probability at least $1 - (\frac12 + \frac14 + \frac18) = \frac18>0$. 
Since $U''\subseteq U'$, we have 
\begin{align} \label{eq:D1U''}
\D_1(H_k[U''])\le 20 (n/d)^{(k-1)\d} \quad  \text{and} \quad \D_1(H_{k-1}[U''])\le 40 (n/d)^{(k-2)\d}
\end{align} 
from \eqref{eq:HU''} and \eqref{eq:H1U''}. We now pick a set $U''$ of size $\frac12\left(\frac{n}{d}\right)^{\frac{1}{k-1}+\delta}$ satisfying \eqref{eq:D1U''} and the $\bouquet$ properties.

We apply Theorem \ref{thm:main} with $T=3\left(\frac{n}{d}\right)^{\delta}$ and $N=|U''|= \frac12\left(\frac{n}{d}\right)^{\frac{1}{k-1}+\delta}$ to $\cH[U'']$. Note that $(\log N)^3\le T\le N^{1/(4k)}$ because $0< \delta < \frac{1}{4k(k-1)}$ and $\frac nd$ is sufficiently large.
By \eqref{eq:D1U''} and $k\ge 4$, we have $\D_1(H_{k}[U''])\le T^{k-1}$ and $\D_1(H_{k-1}[U''])\le T^{k-2} (\log T)^{\frac{1}{k-1}}$. 
Since $\cH[U'']$ contains no 2-cycle, we have $\D_{i-1}(H_i[U''])\le 1$ for $i=k-1, k$.
Theorem \ref{thm:main} thus gives that 
\begin{align*}
\a(\cH[U'']) &\ge c_k \frac{|U''|}{T} \left(\log T \right)^{\frac1{k-1}} = c_k \frac{\frac12 (n/d)^{\frac 1{k-1}+\delta} }{3(n/d)^{\delta}} \left(\log \Big[ 3\left(\frac nd\right)^{\delta}\Big] \right)^{\frac1{k-1}} \\
&\ge \frac{c_k}{6} \left(\delta \frac nd \log \left(\frac nd\right) \right)^{\frac1{k-1}},
\end{align*}
where $c_k>0$ is a constant. Since $\a(H)\ge \a(\cH[U''])$, this gives the desired bound for $\a(H)$.

\vspace{0.2cm}

\noindent \textbf{Case (2):} \emph{$\deg_H(S)\not\in \left(n^{\frac{k-2}{k-1}}d^{\frac{1}{k-1}}/\left[\log(\frac{n}{d})\right]^{k+1}, n^{\frac{k-2}{k-1}}d^{\frac{1}{k-1}} \right)$ for every $S \in \binom{V(H)}{k-1}$, and $H$ contains no 4-cycle with clean intersections.}

\vspace{0.3cm}

We first observe that $\cH$ contains no 4-cycle with clean intersections. Suppose that edges $e_1, e_2, e_3, e_4$ form a 4-cycle in $\cH$ with $e_1\cap e_3=\emptyset$ and $e_2\cap e_4=\emptyset$. Then we can form a $4$-cycle $f_1,f_2,f_3,f_4 \in H$ as follows: If $e_i \in H_k$, then let $f_i=e_i$. Otherwise $e_i\in H_{k-1}$, we take $f_i\in H$ such that $f_i=e_i\cup\{y_i\}$ for some $y_i\notin \bigcup_{j\neq i}^4f_j$ (this is possible because $\deg_H(e_i)\geq n^{\frac{k-2}{k-1}}d^{\frac{1}{k-1}}\ge 4k$). Hence, $f_1,f_2,f_3$ and $f_4$ form a $4$-cycle with $f_1\cap f_3=e_1\cap e_3=\emptyset$, $f_2\cap f_4=e_2\cap e_4=\emptyset$, which contradicts the assumption that $H$ has no $4$-cycle with clean intersections.

Similar as in Case $(1)$, the definition of $H_k$ forces that 
\begin{align}\label{eq:Dk-1Hk}
\D_{k-1}(H_k)\leq n^{\frac{k-2}{k-1}}d^{\frac{1}{k-1}}/\left(\log \frac{n}{d}\right)^{k+1}. 
\end{align}
However, the strategy here will be slightly different from the previous case. The bound given by (\ref{eq:Dk-1Hk}) will not be enough to remove $(2, k-1)$-cycles of $H_k$. However, since $\cH$ has no $4$-cycles with clean intersections, we will be able to find a subhypergraph satisfying the $\bouquet$ properties and then apply Theorem \ref{thm:main}.

Apply the sampling described previously with $0<\delta<\frac{1}{4 k^2}$. Let $X = \sum_{\ell=2}^{k-2} C_{\cH[U]}(2,\ell) + X_3$ (we know $X_4=0$ because $H$ contains no $4$-cycle with clean intersections). Earlier calculations give that $\mE(X) = O\left(\left(\frac{n}{d}\right)^{3k-3)\delta}\right)=o\left(\left(\frac{n}{d}\right)^{\frac{1}{k-1}+\delta}\right)$. 

Let $U''$ be the subset of $U'=U\setminus(Z_{k-1}\cup Z_k)$ obtained by removing (at least) one vertex from each of the linear $3$-cycles and $(2,\ell)$-cycles in $\cH[U']$ with $2\leq \ell \leq k-2$. We claim that $\cH[U'']$ has the $\bouquet$ property. Indeed, 
Properties i) and ii) follow from the fact that only 2-cycles in $\cH$ are $(2,k-1)$-cycles from $H_k$. 
Properties iii) and iv) hold because $\cH[U'']$ contains no linear 3-cycle and no 4-cycle with clean intersections. To see v), suppose that there exist three edges $e_1, e_2, e_3\in H_3$ such that $|e_1 \cap e_2|=|e_2\cap e_3|=2$ and $|e_1\cap e_3|=1$. Since the only 2-cycles in $\cH$ are those with intersection of size $k-1$, we must have $k=3$, which contradicts the assumption of Theorem~\ref{th:application}.

By a similar argument as Case (1), one can prove that $\cH[U'']$ satisfies \eqref{eq:U''} and \eqref{eq:D1U''}. It remains to bound $\D_{k-1}(H_k)$. 
We first extend the definition of  
\[ \deg_{\cH[U]}(S):=|\{x\in U:\: S\cup\{x\} \in \cH\}|. \]
by allowing $S$ to be any $(k-1)$-set $S\subseteq V(H)$.
For any $(k-1)$-set $S$, we have
\begin{align*}
    \mE(\deg_{\cH[U]}(S))=p\deg_{H_k}(S)\leq p\D_{k-1}(H_k)\leq \left(\frac{n}{d}\right)^\delta/\left(\log \frac{n}{d}\right)^{k+1} \quad \text{by } \eqref{eq:Dk-1Hk}.
\end{align*}
Therefore, by Chernoff's bound, we have
\begin{align*}
    \mP\left(|\deg_{\cH[U]}(S)-\mE(\deg_{\cH[U]}(S))|\geq \left(\frac{n}{d}\right)^\delta/\left(\log \frac{n}{d}\right)^{k+1}\right) &\leq 2\exp\left(-\frac{\left(\frac{n}{d}\right)^{2\delta}/\left(\log \frac{n}{d}\right)^{2k+2}}{3\mE(\deg_{\cH[U]}(S))}\right)\\
    &\leq 2\exp\left(-\frac{\left(\frac{n}{d}\right)^\delta}{3\left(\log \frac{n}{d}\right)^{k+1}}\right).
\end{align*}
Hence, by a union bound, the probability that there exists a $(k-1)$-set $S$ such that $S\subseteq U$ and
$
    |\deg_{\cH[U]}(S)-\mE(\deg_{\cH[U]}(S))|\geq
    \left(\frac{n}{d}\right)^{\delta} / \left(\log\frac{n}{d}\right)^{k+1} 
$
is at most
\begin{align*}
    2p^{k-1} {n}^{k-1}\exp\left(-\frac{\left(\frac{n}{d}\right)^\delta}{3\left(\log \frac{n}{d}\right)^{k+1}}\right) =  2\left(\frac{n}{d}\right)^{1+(k-1)\delta}\exp\left(-\frac{\left(\frac{n}{d}\right)^\delta}{3\left(\log \frac{n}{d}\right)^{k+1}}\right)=o(1)
\end{align*}
as $n/d$ is sufficiently large. Then, with probability $1- o(1)$, every $(k-1)$-tuple $S\subseteq U$ satisfies that
\begin{align*}
    \deg_{\cH[U]}(S)\leq
    2 \left(\frac{n}{d}\right)^{\delta} / \left(\log\frac{n}{d}\right)^{k+1}.
\end{align*}
Thus, with positive probability, we can find a set $U''$ such that $\cH[U''] \in \bouquet$, satisfying \eqref{eq:U''}, \eqref{eq:D1U''}, 
\begin{align*}
\D_{k-2}(H_{k-1}[U''])\le 1 \quad \text{and} \quad 
    \D_{k-1}(H_k[U''])\leq  \frac{2\left(\frac{n}{d}\right)^{\delta}}{(\log(n/d))^{k+1}} \le \frac{\left(\frac{n}{d}\right)^{\delta}}
    {\left(\log(n/d)^{\delta} \right)^{k+1}}.
\end{align*}
After removing vertices if necessary, we may assume that $|U''|= \frac12\left(\frac{n}{d}\right)^{\frac{1}{k-1}+\delta}$.
We now apply Theorem \ref{thm:main} with $T=3(n/d)^{\delta}$ and $N=|U''|$ to obtain the desired bound on $\a(\cH)$ exactly as in the first case.
\end{proof}

\section{Appendix B: Proof of Theorem \ref{cor2}}

\begin{proof}[Proof of Theorem \ref{cor2}]
Let $U\subseteq V(H)$ be a random subset whose vertices are chosen independently, each one with probability $p=t^{\delta-1}$ for $\delta=\frac{\eps}{4k}$. Then 
\begin{align}\label{eq:171}
    \mE(|U|)=pn=nt^{\delta-1}.
\end{align}
Since $|U|$ follows a binomial distribution, by Chernoff's bound, we have
\begin{align}\label{eq:172}
    \mP\left(||U|-pn|>\frac{pn}{10}\right)<e^{-cpn}<\frac{1}{8}
\end{align}
for some constant $c>0$ and all sufficiently large $n$.

As in the proof of Theorem \ref{th:application}, our strategy is to apply Theorem \ref{thm:main} to a subgraph $H[U']$ for some $U'\subseteq U$. We obtain $U'$ by removing vertices of high degree from $U$ and at least one vertex from every $(2,\ell)$-cycle with $2\leq \ell \leq k-2$ and from every linear $3$-cycle in $H[U]$.

We start by counting the expected number of $(2,\ell)$-cycles in $H[U]$ for $2\leq \ell \leq k-2$. Note that we can upper bound $C_{H[U]}(2, \ell)$, the number of $(2,\ell)$-cycles as follows: There are $|H|$ ways to choose the first edge $e$, $\binom{k}{\ell}$ ways to fix an $\ell$-tuple in $e$ and at most $\D_{\ell}(H)$ ways to extend this $\ell$-tuple to a second edge. Thus, by using that $\D_\ell(H)\leq t^{k-\ell-\eps}$, $|H|\leq nt^{k-1}$ and $\delta=\eps/4k$, we have 
\begin{align}\label{eq:173}
    \mE(C_{H[U]}(2,\ell))\leq |H|\binom{k}{\ell}\D_{\ell}(H)p^{2k-\ell}=O(nt^{\delta(2k-\ell)-\eps-1})=o(nt^{\delta-1}).
\end{align}

Let $X_3$ be the number of linear $3$-cycles in $H[U]$. One can estimate the number of linear $3$-cycles in $H$ in a similar fashion: There are $|H|$ choices for the first edge $e_1$, $k$ ways to choose a vertex $v_1\in e_1$, at most $\D_1(H)$ ways to extend $v_1$ to a second edge $e_2$, at most $k^2$ ways to fix a vertex $v_3\in e_1\setminus \{x_1\}$ and a vertex $v_2\in e_2\setminus \{x_1\}$, and finally at most $\D_2(H)$ ways to extend $v_3, v_2$ to an edge $e_3$. Thus,
\begin{align}\label{eq:174}
    \mE(X_3)\leq |H| k^3\D_1(H)\D_2(H)p^{3k-3}
    =O(nt^{\delta(3k-3)-\eps-1})=o(nt^{\delta-1}),
\end{align}
because $\delta(3k-3)-\eps<\delta$. Therefore, by (\ref{eq:173}) and (\ref{eq:174}), we have
\begin{align*}
    \mE\left(\sum_{\ell=2}^{k-2}C_{H[U]}(2,\ell)+X_3\right)\leq \frac{nt^{\delta-1}}{20}.
\end{align*}
Hence, by Markov's inequality,
\begin{align}\label{eq:175}
    \mP\left(\sum_{\ell=2}^{k-2}C_{H[U]}(2,\ell)+X_3>\frac{nt^{\delta-1}}{5}\right)\leq \frac{1}{4}.
\end{align}

Let $Z$ be the random variable counting the number of vertices $x\in U$ such that
\begin{align*}
    \deg_{H[U]}(x)>10\mE\left(\deg_{H[U]}(x)\right)=10p^{k-1}\deg_H(x).
\end{align*}
By Markov's inequality,
\begin{align*}
    \mP\left(\deg_{H[U]}(x)>10p^{k-1}\deg_H(x)\right)\leq \frac{1}{10}
\end{align*}
for every $x\in V(H)$. Since the event $x\in U$ is independent of the events $\{y\in U\}$ for $y\neq x$, we obtain
\begin{align*}
    \mP(x\in Z)=\mP(x\in U)\mP\left(\deg_{H[U]}(x)>10p^{k-1}\deg_H(x)\right)\leq \frac{p}{10}.
\end{align*}
Hence, $\mE(|Z|)\le pn/10$.
By another application of Markov's inequality,
\begin{align}\label{eq:176}
    \mP\left(|Z|\geq \frac{pn}{5}\right)\leq \frac{1}{2}.
\end{align}
By (\ref{eq:172}), (\ref{eq:175}) and (\ref{eq:176}), we have
\begin{align*}
    |U|-|Z|-\sum_{\ell=2}^{k-2}C_{H[U]}(2,\ell)-X_3 \ge \frac{9pn}{10}-\frac{pn}{5}-\frac{nt^{\delta-1}}{5}=\frac{1}{2}nt^{\delta-1}
\end{align*}
with probability at least $1/8$. Thus, with probability at least $1/8$, there exists a set $U'\subseteq U$ of size $|U|=\frac{1}{2}nt^{\delta-1}$ such that $U'$  contains no $(2,\ell)$-cycle or linear $3$-cycle,  and if $x\in U'$, then
\begin{align}\label{eq:177}
    \deg_{H[U']}(x)<\deg_{H[U]}(x)\leq 10p^{k-1}\deg_{H}(x)\leq 10p^{k-1}\D_1(H)\leq 10t^{(k-1)\delta}.
\end{align}

We observe that $H[U'] \in \bouquet$. Indeed, since $H[U']$ is $k$-uniform, Conditions i) and v) of $\bouquet$ trivially hold. Conditions ii) and iii) follows from the fact that $H[U']$ has no linear $3$-cycles and $(2,\ell)$-cycles for $2\leq \ell \leq k-2$. Finally, Condition iv) comes from the fact that $H$ has no $4$-cycle with clean intersections and $H[U']$ is a subgraph of $H$. Hence, $H[U'] \in \bouquet$.

It remains to bound $\D_{k-1}(H[U'])$. For any fixed $(k-1)$-set $S\subseteq V(H)$, the variable $\deg_{H[U]}(S)$ satisfies a binomial distribution $B(|\deg_H(S)|,p)$ with mean
\begin{align*}
    \mE\left(\deg_{H[U]}(S)\right)=p\deg_H(S).
\end{align*}
Thus, by Chernoff's inequality,
\begin{align*}
    &\mP\left(\left|\deg_{H[U]}(S) -p\deg_H(S)\right| <\frac{pt}{(\log t)^{k+1}} \right) \leq 2 \exp\left(\frac{-\left(\frac{pt}{(\log t)^{k+1}}\right)^2}{3p\deg_H(S)}\right)  \\
    & \leq 2 \exp\left(\frac{-\left(\frac{pt}{(\log t)^{k+1}}\right)^2}{3p\frac{t}{(\log t)^{k+1}} }\right) = 2\exp\left(-\frac{t^{\delta}}{3(\log t)^{k+1}}\right)\le\frac{2}{n^{k}},
\end{align*}
because $\frac{t^{\delta}}{3(\log t)^{k+1}}\ge k t^{\delta/3}
= k t^{\eps/(12k)}\geq k \log n$. Hence, by an union bound, with probability at least $1 - \binom{n}{k-1}\frac2{n^k}= 1 - o(1)$,
\begin{align}
\label{eq:178}
    \deg_{H[U']}(S)\leq \deg_{H[U]}(S) \leq \mE\left(\deg_{H[U]}(S)\right)+\frac{pt}{(\log t)^{k+1}}\leq \frac{2pt}{(\log t)^{k+1}}
\end{align}
for \emph{all} $(k-1)$-sets $S\subseteq V(H)$. 
Therefore, with probability at least $1/16$, there exists a subset $U'\subseteq U$ of size $|U'|=\frac{1}{2}nt^{\delta-1}$ such that $H[U']\in \bouquet$, (\ref{eq:177}) and (\ref{eq:178}) hold, that is,
\begin{align*}
    \D_1(H[U'])\leq 10t^{(k-1)\delta} \quad \text{and} \quad \D_{k-1}(H[U'])\leq \frac{2t^{\delta}}{(\log t)^{k+1}}.
\end{align*}

Set $T=10^{\frac1{k-1}}t^{\delta}$ and $N=|U'|=\frac{n}{2}t^{\delta-1}$. 
Then $T \ge t^{\frac{\eps}{4k}}\geq (\log n)^{3}\ge (\log N)^3$ and 
$T^{4k}= c t^{4k\delta} = c t^{\eps} \le c n/t \le N$, where
$c =10^{\frac{4k}{k-1}}$.
We have $\D_1(H[U'])\leq 10t^{(k-1)\delta}=T^{k-1}$ and 
$\D_{k-1}(H[U'])\leq \frac{2t^{\delta}}{(\log t)^{k+1}}
\le T/ (\log T)^{k+1}$ because $4\delta^{k+1}\le 4 (\frac1{4k})^{k+1}< 10^{\frac1{k-1}}$.
Then $H[U']$ satisfies the hypothesis of Theorem \ref{thm:main}. Therefore,
\begin{align*}
    \a(H)\geq \a(H[U']) = \Omega\left(\frac{N}{T}(\log T)^{\frac{1}{k-1}}\right)
    =\Omega\left(\frac{n}{t}(\log t)^{\frac{1}{k-1}}\right). &\qedhere
\end{align*}
\end{proof}

\end{document}